\numberwithin{equation}{section}
\titleformat{\chapter}[display]
{\normalfont\huge\bfseries}{\chaptertitlename\\thechapter}{20pt}{\Huge}
\titleformat{\paragraph}[runin]
{\normalfont\normalsize\bfseries}{\theparagraph}{1em}{}
\titleformat{\subparagraph}[runin]
{\normalfont\normalsize\bfseries}{\thesubparagraph}{1em}{}
\titlespacing*{\chapter} {0pt}{50pt}{40pt}
\titlespacing*{\section} {0pt}{3.5ex plus 1ex minus .2ex}{2.3ex plus .2ex}
\titlespacing*{\subsection} {0pt}{3.25ex plus 1ex minus .2ex}{1.5ex plus .2ex}
\titlespacing*{\subsubsection}{0pt}{3.25ex plus 1ex minus .2ex}{1.5ex plus .2ex}
\titlespacing*{\paragraph} {0pt}{3.25ex plus 1ex minus .2ex}{1em}
\titlespacing*{\subparagraph} {\parindent}{3.25ex plus 1ex minus .2ex}{1em}
\newtheorem{theorem}{Theorem}[section]
\newtheorem{lemma}[theorem]{Lemma}
\newtheorem{proposition}[theorem]{Proposition}
\newtheorem{corollary}[theorem]{Corollary}
\theoremstyle{definition}
\newtheorem{definition}[theorem]{Definition}
\newtheorem{notations}[theorem]{Notations}
\newtheorem{notation}[theorem]{Notation}
\newtheorem{example}[theorem]{Example}
\theoremstyle{remark}
\newtheorem{remark}[theorem]{Remark}
\DeclareMathOperator{\codom}{codom}
\DeclareMathOperator{\dom}{dom}
\DeclareMathOperator{\ide}{id}
\DeclareMathOperator{\End}{End}
\DeclareMathOperator{\Ext}{Ext}
\DeclareMathOperator{\Ob}{Ob}
\DeclareMathOperator{\ima}{Im}
\DeclareMathOperator{\BC}{BC}
\DeclareMathOperator{\BN}{BN}
\DeclareMathOperator{\BP}{BP}
\DeclareMathOperator{\Ho}{H}
\DeclareMathOperator{\HH}{HH}
\DeclareMathOperator{\HC}{HC}
\DeclareMathOperator{\HP}{HP}
\DeclareMathOperator{\HN}{HN}
\DeclareMathOperator{\Hom}{Hom}
\DeclareMathOperator{\Tor}{Tor}
\newcommand{\ov}{\overline}
\newcommand{\ot}{\otimes}
\newcommand{\wh}{\widehat}
\newcommand{\wt}{\widetilde}
\newcommand{\ba}{\mathbf a}
\newcommand{\bx}{\mathbf x}
\newcommand{\bh}{\mathbf h}
\newcommand{\brh}{\mathrm h}
\newcommand{\byy}{\mathbf y}
\newcommand{\hs}{\hspace{-0.5pt}}
\newcommand{\hsm}{\hspace{-0.5pt}}
\newcommand{\xcdot}{\cdot}
\newcommand{\xcirc}{\hsm\circ\hsm}
\DeclareMathAlphabet{\mathpzc}{OT1}{pzc}{m}{it}
\begin{document}

\title{(Co)homology of Crossed Products by Weak Hopf Algebras}

\author{Jorge A. Guccione}
\address{Departamento de Matem\'atica\\ Facultad de Ciencias Exactas y Naturales-UBA, Pabell\'on~1-Ciudad Universitaria\\ Intendente Guiraldes 2160 (C1428EGA) Buenos Aires, Argentina.}
\address{Instituto de Investigaciones Matem\'aticas ``Luis A. Santal\'o"\\ Facultad de Ciencias Exactas y Natu\-ra\-les-UBA, Pabell\'on~1-Ciudad Universitaria\\ Intendente Guiraldes 2160 (C1428EGA) Buenos Aires, Argentina.}
\email{vander@dm.uba.ar}

\author{Juan J. Guccione}
\address{Departamento de Matem\'atica\\ Facultad de Ciencias Exactas y Naturales-UBA\\ Pabell\'on~1-Ciudad Universitaria\\ Intendente Guiraldes 2160 (C1428EGA) Buenos Aires, Argentina.}
\address{Instituto Argentino de Matem\'atica-CONICET\\ Savedra 15 3er piso\\ (C1083ACA) Buenos Aires, Argentina.}
\email{jjgucci@dm.uba.ar}

\thanks{Jorge A. Guccione and Juan J. Guccione were supported by CONICET PIP 2021-2023 GI,11220200100423CO and CONCYTEC-FONDECYT within the framework of the contest ``Proyectos de Investigaci\'on B\'asica 2020-01'' [contract number 120-2020-FONDECYT]}

\author{Christian Valqui}
\address{Pontificia Universidad Cat\'olica del Per\'u, Secci\'on Matem\'aticas, PUCP, Av. Universitaria 1801, San Miguel, Lima 32, Per\'u.}
\address{Instituto de Matem\'atica y Ciencias Afines (IMCA) Calle Los Bi\'ologos 245. Urb San C\'esar.
La Molina, Lima 12, Per\'u.}
\email{cvalqui@pucp.edu.pe}

\thanks{Christian Valqui was supported by CONCYTEC-FONDECYT within the framework of the contest ``Proyectos de Investigaci\'on B\'asica 2020-01" [contract number 120-2020-FONDECYT]}

\subjclass[2010]{primary 16E40; secondary 16T05}
\keywords{Crossed products, Hochschild (co)homology, Cyclic homology, Weak Hopf algebras}

\begin{abstract} We obtain a mixed complex simpler than the canonical one that computes the cyclic type homologies of a crossed product with invertible cocycle $A\times_{\rho}^f H$, of a weak module algebra $A$ by a weak Hopf algebra $H$. This complex is endowed with a filtration. The spectral sequence of this filtration generalizes the spectral sequence obtained in \cite{CGG}. When $f$ takes its values in a separable subalgebra of $A$ that satisfies suitable conditions, the above mentioned mixed complex is provided with another filtration, whose spectral sequence generalize the Feigin-Tsygan spectral sequence.
\end{abstract}

\maketitle

\tableofcontents

\section*{Introduction}

Given a differential or algebraic manifold $M$, each group $G$ acting on $M$ acts in a natural way on the ring $A$ of regular functions of $M$, and the algebra ${}^{G\!} A$ of invariants of this action consists of the functions that are constants on each of the orbits of $M$. This suggest to consider ${}^{G\!} A$ as a replacement for $M/G$ in noncommutative geometry. Under suitable conditions the invariant algebra ${}^{G\!} A$ and the smash product $A\# k[G]$, associated with the action of $G$ on $A$, are Morita equivalent. Since $K$-theory, Hochschild homology and cyclic homology are Morita invariant, there is no loss of information if ${}^{G\!} A$ is replaced by $A\# k[G]$. In the general case the experience has shown that smash products are better choices than invariant rings for algebras playing the role of noncommutative quotients. In fact, except when the invariant algebra and the smash product are Morita equivalent, the first one never is considered in noncommutative geometry. The problem of developing tools to compute the cyclic homology of smash products algebras $A\# k[G]$, where $A$ is an algebra and $G$ is a group, was considered in \cites{FT, GJ, N}. For instance, in the first paper the authors obtained a spectral sequence converging to the cyclic homology of $A\# k[G]$, and in \cite{GJ} this result was derived from the theory of paracyclic modules and cylindrical modules developed by the authors. The main tool for this computation is a version for cylindrical modules of the Eilenberg-Zilber theorem. More recently, and also due to its connections with noncommutative geometry, the cyclic homology of algebras obtained through more general constructions involving Ore extensions, twisted tensor products or Hopf algebras (Hopf crossed products, Hopf Galois extensions, Braided Hopf crossed products, etcetera) has been extensively studied. See for instance \cites{AK, CGG, CGG1, CGG2, CGGV, FGG, GG2, GG3, JS, KR, KS, R, V, W, ZH}. Weak Hopf algebras (also called quantum groupoids) are an important generalization of Hopf algebras in which the counit is not required to be an algebra homomorphism and the unit is not required to be a coalgebra homomorphism, these properties being replaced by weaker axioms. Examples of weak Hopf algebras are groupoid algebras and their duals, face algebras \cite{Ha}, quantum groupoids constructed from subfactors \cite{NV} and generalized Kac algebras of Yamanouchi \cite{Y}. It is natural to try to extend the results of \cite{FT} to noncommutative quotients $A\# H$ of algebras $A$ by actions of weak Hopf algebras $H$. More generally, in this paper we use the results obtained in \cite{GGV2} to study the Hochschild (co)homology and the cyclic type homologies of weak crossed products with invertible cocycle. Specifically, for a unitary crossed product $E\coloneqq A\times_{\rho}^f H$, of an algebra $A$ by a weak Hopf al\-ge\-bra $H$ and a subalgebra $K$ of $A$ satisfying suitable conditions, we construct a chain complex, a cochain complex and a mixed complex, simpler than the canonical ones (and also simpler that the ones constructed in \cite{GGV2}), that compute the Hochschild (co)homology of $E$ with coefficients in an $E$-bimodule $M$, relative to $K$, and the cyclic, negative and periodic homologies of $E$, relative to $K$ (see \cite{GS}). It is well known that when $K$ is separable, then these relative groups coincide with the absolute groups. These complexes are endowed with canonical filtrations whose spectral sequences generalize the Hochschild-Serre spectral sequence and the Feigin and Tsygan spectral sequence. By example, applying these results, we obtain that when $A=K$, the Hochschild homology of $E$ with coefficients in $M$, relative to $K$, coincide with the homology of $H$ with coefficients in the group $M/[M,K]$, considered as a left $H$-module via conjugation. A similar result is obtained for the Hochschild cohomology (see Subsection~\ref{(Co)homology of weak Hopf algebras} and Examples~\ref{A es K homologia} and~\ref{A es K cohomologia}).

\smallskip

The paper is organized as follows:

\smallskip

In Section~1 we review the notions of weak Hopf algebra and of crossed products of algebras by weak Hopf algebras, and we recall the concept of mixed complex and the perturbation lemma. In sections~2 and~3 we obtain complexes that compute the Hochschild homology and the Hochschild cohomology of a weak crossed product with invertible cocycle $E\coloneqq A\times_{\rho}^f H$ with coefficients in an $E$-bimodule $M$. Then, in Section~4 we study the cup and the cap products of $E$, and in Section~\ref{cyclic homology of cleft extensions} we obtain a mixed complex that computes the cyclic type homologies of $E$.

\smallskip

\noindent\textsl{Remark.} In this paper we consider the notion of weak crossed products introduced in~\cite{AG}, but this is not the unique concept of weak crossed product of algebras by weak Hopf algebras in the literature. There is a notion of crossed product of an algebra $A$ with a Hopf algebroid introduced by B\"ohm and Brzezi\'nski in \cite{BB}. It is well known that weak Hopf algebras $H$ provide examples of Hopf algebroids. The crossed products considered by us in this paper are canonically isomorphic to the B\"ohm-Brzezi\'nski crossed products $A\#_f H$ with invertible cocycle whose action satisfies $h\cdot (l\cdot 1_A) = hl\cdot 1_A$ for all $h\in H$ and $l\in H^L$ (see \cite{GG1}). So, our results also apply to these algebras.

\smallskip

We thank the referee for a careful reading of the paper and for his indications that helped improve the writing. We also thank the referee for pointing out the paper \cite{KS}, in which the Hochschild homology of the twisted tensor product of algebras is studied, obtaining complete calculations for several examples. There is some intersec\-tion between the results of \cite{KS} and some results obtained in \cite{GG2} and \cite{GG3}. More precisely, \cite{KS}*{Proposition 2.1} is \cite{GG2}*{Theorem~1.7} and the results in \cite{KS}*{Subsections~3.3.1 to~3.3.3} are the cases $r=n=2$, $r=1, n=2$ and~$r=0, n=2$ of \cite{GG3}*{Proposition~1.9}.

\section{Preliminaries}
In this article we work in the category of vector spaces over a field $k$. Hence we assume implicitly that all the maps are $k$-linear maps. The tensor product over $k$ is denoted by $\ot_k$. Given an ar\-bitrary algebra $K$, a $K$-bimodule $V$ and an $n\ge 0$, we let $V^{\ot_{\hs K}^n}$ denote the $n$-fold tensor product $V\ot_{\hs K}\cdots\ot_{\hs K} V$, which is considered as a $K$-bimodule via
$$
\lambda\xcdot (v_1\ot_{\hs K}\cdots \ot_{\hs K} v_n)\xcdot \lambda' \coloneqq \lambda\xcdot v_1\ot_{\hs K}\cdots \ot_{\hs K} v_n\xcdot \lambda'.
$$
Given $k$-vector spaces $U$, $V$, $W$ and a map $g\colon V\to W$ we write $U\ot_k g$ for $\ide_U\ot_k g$ and $g\ot_k U$ for $g\ot_k\ide_U$. We assume that the reader is familiar with the notions of weak Hopf algebra introduced in~\cites{BNS1,BNS2} and of weak crossed products introduced in~\cite{AG} and studied in a series of papers (see for instance \cites{AFGR1, AFGR2, AFGR3, FGR, GGV1, Ra}). We are specifically interested in the case in which $A$ is a weak module algebra and the cocycle of the crossed product is convolution invertible (see \cite{GGV1}*{Sections~4--6}).

\subsection{Weak Hopf algebras}\label{subsection: Weak Hopf algebras}

Weak bialgebras and weak Hopf algebras are generalizations of bialgebras and Hopf algebras, introduced in~\cites{BNS1,BNS2}, in which the axioms about the unit, the counit and the antipode are replaced by weaker properties. Next we give a brief review of the basic properties of these structures.

\smallskip

\begin{definition} Let $k$ be a field. A weak {\em bialgebra} is a $k$-vector space $H$, endowed with an algebra structure and a~co\-algebra structure, such that $\Delta(hl) =\Delta(h)\Delta(l)$ for all $h,l\in H$, and the equalities
\begin{align}
\label{propiedad de 1}
& \Delta^2(1)= 1^{(1)}\ot_k 1^{(2)} 1^{(1')}\ot_k 1^{(2')}= 1^{(1)}\ot_k 1^{(1')} 1^{(2)}\ot_k 1^{(2')}
\shortintertext{and}
\label{propiedad de epsilon}
& \epsilon(hlm)=\epsilon(hl^{(1)})\epsilon(l^{(2)}m)=\epsilon(hl^{(2)})\epsilon(l^{(1)}m)\quad\text{for all $h,l,m\in H$,}
\end{align}
are fulfilled, where we are using the Sweedler notation for the coproduct, with the summation symbol omitted. A {\em weak bialgebra morphism} is a function $g\colon H\to L$ that is an algebra and a coalgebra map.
\end{definition}

In the rest of this subsection we assume that $H$ is a weak bialgebra.

It is well known that the maps $\Pi^L,\Pi^R, \overline{\Pi}^L, \overline{\Pi}^R\in \End_k(H)$,\index{zqa@$\Pi^L$|dotfillboldidx}\index{zqb@$\Pi^R$|dotfillboldidx}\index{zqc@$\overline{\Pi}^L$|dotfillboldidx}
\index{zqd@$\overline{\Pi}^R$|dotfillboldidx}
defined by
$$
\Pi^L(h)\hs\coloneqq\hs \epsilon(1^{(1)}h)1^{(2)},\hs\quad \Pi^R(h)\hs\coloneqq\hs 1^{(1)}\epsilon(h1^{(2)}),\hs\quad \overline{\Pi}^L(h)\hs\coloneqq\hs 1^{(1)}\epsilon(1^{(2)}h),\hs\quad\overline{\Pi}^R(h)\hs\coloneqq\hs \epsilon(h1^{(1)})1^{(2)},
$$
respectively, are idempotent (for a proof see~\cites{BNS1, CDG}). We set $H^{\!L}\coloneqq \ima(\Pi^L)\index{ha@$H^L$|dotfillboldidx}$ and $H^{\!R}\coloneqq \ima(\Pi^R)\index{hb@$H^R$|dotfillboldidx}$. In~\cite{CDG} it was also proven that $\ima(\overline{\Pi}^R)=H^{\!L}$ and $\ima(\overline{\Pi}^L) = H^{\!R}$.

\begin{proposition}\label{para buena def} For all $h,l\in H$ and $m\in H^{\!L}$, we have
$$
\ov{\Pi}^R(hl) = \ov{\Pi}^R(\ov{\Pi}^R(h)l),\quad \Pi^R(hl) = \Pi^R(\ov{\Pi}^R(h)l)\quad\text{and}\quad \ov{\Pi}^R(hm) = \ov{\Pi}^R(h)m.
$$
\end{proposition}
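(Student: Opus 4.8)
The plan is to handle the three identities in order and to deduce the third formally from the first. The engine for the first two is the specialization of the counit axiom \eqref{propiedad de epsilon} obtained by setting its middle variable equal to $1$, namely $\epsilon(h1^{(1)})\epsilon(1^{(2)}n)=\epsilon(hn)$ for all $h,n\in H$; I would use it in the ``merging'' direction, collapsing two counits joined through a copy of $\Delta(1)$ into a single one.

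First I would expand $\ov{\Pi}^R(\ov{\Pi}^R(h)l)$ by substituting $\ov{\Pi}^R(h)=\epsilon(h1^{(1)})1^{(2)}$ for one copy of $\Delta(1)$ and the outer $\ov{\Pi}^R$ for a second, independent copy $1^{(1')}\ot 1^{(2')}$, obtaining $\epsilon(h1^{(1)})\epsilon(1^{(2)}l1^{(1')})1^{(2')}$. The merging identity applied with $n=l1^{(1')}$ collapses $\epsilon(h1^{(1)})\epsilon(1^{(2)}\,\cdot\,)$ into $\epsilon(h\,\cdot\,)$, yielding $\epsilon(hl1^{(1')})1^{(2')}=\ov{\Pi}^R(hl)$, which is the first identity. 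The second identity is proved by the identical maneuver performed inside $\Pi^R$: expanding $\Pi^R(\ov{\Pi}^R(h)l)=1^{(1')}\epsilon(\ov{\Pi}^R(h)l1^{(2')})$ and substituting $\ov{\Pi}^R(h)$ gives $1^{(1')}\epsilon(h1^{(1)})\epsilon(1^{(2)}l1^{(2')})$, and the same merge (now with $n=l1^{(2')}$) produces $1^{(1')}\epsilon(hl1^{(2')})=\Pi^R(hl)$.

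For the third identity I would avoid any direct computation and instead read it off from the first. Taking $l=m\in H^{\!L}$ in the first identity gives $\ov{\Pi}^R(hm)=\ov{\Pi}^R(\ov{\Pi}^R(h)m)$. Now $\ov{\Pi}^R(h)$ lies in $\ima(\ov{\Pi}^R)=H^{\!L}$ (recorded in the Remark at the start of this subsection), and $m\in H^{\!L}$, so by Proposition~\ref{conmut1} the product $\ov{\Pi}^R(h)m$ again lies in $H^{\!L}$. Since $\ov{\Pi}^R$ is idempotent with image $H^{\!L}$, it fixes every element of $H^{\!L}$; applying this to $\ov{\Pi}^R(h)m$ gives $\ov{\Pi}^R(\ov{\Pi}^R(h)m)=\ov{\Pi}^R(h)m$, and combining the two displays yields $\ov{\Pi}^R(hm)=\ov{\Pi}^R(h)m$.

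The computations are light, and there is no serious obstacle; the only care needed is bookkeeping, keeping the two copies of $\Delta(1)$ notationally distinct and applying \eqref{propiedad de epsilon} in its reverse (merging) direction rather than the splitting direction in which it is stated. The mild conceptual point is simply to notice that the third identity is purely formal once the first is in hand, so that no separate manipulation with the explicit description of $H^{\!L}$ is required.
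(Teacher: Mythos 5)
Your proof is correct. The paper's printed proof is just ``Left to the reader,'' but the argument the authors had in mind coincides with yours on the first and third identities: the first is exactly your counit manipulation (they write it left to right, splitting $\epsilon(hl1^{(1)})$ through a second copy of $\Delta(1)$ via \eqref{propiedad de epsilon} with middle variable $1$, where you expand the right-hand side and merge -- the same computation read in the opposite direction), and the third is obtained precisely as you do, from the first identity together with the facts that $\ov{\Pi}^R(h)m\in H^{\!L}$ (using that $H^{\!L}$ is a subalgebra, Proposition~\ref{conmut1}) and that the idempotent $\ov{\Pi}^R$ fixes its image $H^{\!L}$. The one place you diverge is the second identity: the authors deduce it from the first by applying $\Pi^R=S\xcirc\ov{\Pi}^R$ (Proposition~\ref{S y Pi}), whereas you rerun the counit computation directly inside $\Pi^R$. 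Your route is slightly longer but has the advantage of not invoking the antipode at all, so it is valid for an arbitrary weak bialgebra, which matches the standing hypothesis of the subsection; the authors' route is shorter but implicitly uses that $H$ is a weak Hopf algebra. Either is acceptable here, and your bookkeeping with the two independent copies of $\Delta(1)$ is accurate throughout.
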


\begin{proof} Left to the reader.
\end{proof}

An {\em antipode} of $H$ is a map $S\colon H\to H$\index{sb@$S$|dotfillboldidx} (or $S_H$ if necessary), such that
$$
h^{(1)}S(h^{(2)})=\Pi^L(h),\quad S(h^{(1)})h^{(2)}=\Pi^R(h)\quad\text{and}\quad S(h^{(1)})h^{(2)}S(h^{(3)})=S(h),
$$
for all $h\in H$. As it was shown in~\cite{BNS1}, if an antipode $S$ exists, then it is unique. It was also shown in~\cite{BNS1} that $S$ is antimultiplicative, anticomultiplicative and leaves the unit and counit invariant. 
A {\em weak Hopf algebra} is a weak bialgebra that has an antipode. By \cite{BNS1}*{equalities~(2.24a) and~(2.24b)},
\begin{equation}\label{pepe}
\Pi^L = S\xcirc \ov{\Pi}^L\qquad\text{and}\qquad \Pi^R = S\xcirc \ov{\Pi}^R.
\end{equation}
A {\em morphism of weak Hopf algebras} $g\colon H\to L$ is simply a bialgebra morphism from $H$ to $L$. In~\cite{AFGLV}*{Pro\-position~1.4} it was proven that if $g\colon H\to L$ is a weak Hopf algebra morphism, then $g\xcirc S_H= S_L\xcirc g$.

\subsection{(Co)homology of weak Hopf algebras}\label{(Co)homology of weak Hopf algebras}

Consider $H^{\!R}$ as a right $H$-module via $l\xcdot h\coloneqq \Pi^R(lh)$ (by~\cite{BNS1}*{equality~(2.5b)} this is an action and the map $\Pi^R\colon H\to H^R$ is a morphism of right $H$-modules). By definition, the {\em homology of $H$ with coefficients in a left $H$-module $N$} is $\Ho_*(H,N)\coloneqq \Tor^H_*(H^{\!R},N)$, while the {\em cohomology of $H$ with coefficients in a right $H$-module $N$} is $\Ho^*(H,N)\coloneqq \Ext_H^*(H^{\!R},N)$.

\begin{notations}\label{not 7.1'} We will use the following notations:

\begin{enumerate}[itemsep=0.7ex, topsep=1.0ex, label=(\arabic*)]

\item We set $\ov{H}\coloneqq H/H^{\!L}$\index{hc@$\ov{H}$|dotfillboldidx}. Moreover, given $h\in H$ we let $\ov{h}$\index{hi@$\ov{h}$|dotfillboldidx} denote its class in $\ov{H}$.

\item Given $h_1,\dots, h_s\in H$ we set $\ov{\bh}_{1s}\coloneqq \ov{h_1}\ot_{H^{\!L}} \cdots \ot_{H^{\!L}} \ov{h_s}$.\index{hiab@$\ov{\bh}_{1s}$|dotfillboldidx}

\end{enumerate}
\end{notations}

Clearly $\ov{H}^{\ot_{\! H^{\!L}}^s}$ is an $H^{\!L}$-bimodule via $l\xcdot \ov{\bh}_{1s}\xcdot l' \coloneqq \ov{lh_1} \ot_{\! H^{\!L}} \ov{\bh}_{2,s-1}\ot_{\! H^{\!L}} \ov{h_sl}$.

\begin{proposition}\label{hom} The following facts hold:

\begin{enumerate}[itemsep=0.7ex, topsep=1.0ex, label=(\arabic*)]

\item The homology of $H$ with coefficients in a left $H$-module $N$ is the ho\-mology of the chain complex
\begin{equation*}
\begin{tikzpicture}
\begin{scope}[yshift=0cm,xshift=0cm]
\matrix(BP2complex) [matrix of math nodes,row sep=0em, text height=1.5ex, text
depth=0.25ex, column sep=2.5em, inner sep=0pt, minimum height=5mm,minimum width =6mm]
{N & \ov{H}\ot_{H^{\!L}} N & \ov{H}^{\ot_{\!H^{\!L}}^2}\ot_{H^{\!L}} N & \ov{H}^{\ot_{\!H^{\!L}}^3}\ot_{H^{\!L}} N & \ov{H}^{\ot_{\!H^{\!L}}^4}\ot_{H^{\!L}} N &\cdots,\\};
\draw[<-] (BP2complex-1-1) -- node[above=1pt,font=\scriptsize] {$d_1$} (BP2complex-1-2);
\draw[<-] (BP2complex-1-2) -- node[above=1pt,font=\scriptsize] {$d_2$} (BP2complex-1-3);
\draw[<-] (BP2complex-1-3) -- node[above=1pt,font=\scriptsize] {$d_3$} (BP2complex-1-4);
\draw[<-] (BP2complex-1-4) -- node[above=1pt,font=\scriptsize] {$d_4$} (BP2complex-1-5);
\draw[<-] (BP2complex-1-5) -- node[above=1pt,font=\scriptsize] {$d_5$} (BP2complex-1-6);
\end{scope}
\end{tikzpicture}
\end{equation*}
where $d_1\bigl(h_1\ot_{H^{\!L}} n\bigr)\coloneqq \ov{\Pi}^R(h_1)\xcdot n - h_1\xcdot n$ and, for $s>1$,
\begin{align*}
%
%
d_s\bigl(\ov{\bh}_{1s}\ot_{H^{\!L}} n\bigr)& \coloneqq \ov{\Pi}^R(h_1)\xcdot \ov{\bh}_{2s}\ot_{H^{\!L}} n\\
& + \sum_{i=1}^{s-1} (-1)^i \ov{\bh}_{1,i-1}\ot_{H^{\!L}}\ov{h_ih_{i+1}}\ot_{H^{\!L}} \ov{\bh}_{i+2,s} \ot_{H^{\!L}} n + (-1)^s \ov{\bh}_{1,s-1}\ot_{H^{\!L}} h_s\xcdot n.
\end{align*}

\item The cohomology of $H$ with coefficients in a right $H$-module $N$ is the cohomology of the cochain complex
\begin{equation*}
\begin{tikzpicture}
\begin{scope}[yshift=0cm,xshift=0cm]
\matrix(BP2complex) [matrix of math nodes,row sep=0em, text height=1.5ex, text
depth=0.25ex, column sep=2.5em, inner sep=0pt, minimum height=5mm,minimum width =6mm]
{N & \Hom_{\!H^{\!L}} (\ov{H},N) & \Hom_{\!H^{\!L}} (\ov{H}^{\ot_{\!H^{\!L}}^2},N) & \Hom_{\!H^{\!L}} (\ov{H}^{\ot_{\!H^{\!L}}^3},N)  &\cdots,\\};
\draw[->] (BP2complex-1-1) -- node[above=1pt,font=\scriptsize] {$d^1$} (BP2complex-1-2);
\draw[->] (BP2complex-1-2) -- node[above=1pt,font=\scriptsize] {$d^2$} (BP2complex-1-3);
\draw[->] (BP2complex-1-3) -- node[above=1pt,font=\scriptsize] {$d^3$} (BP2complex-1-4);
\draw[->] (BP2complex-1-4) -- node[above=1pt,font=\scriptsize] {$d^4$} (BP2complex-1-5);
%
\end{scope}
\end{tikzpicture}
\end{equation*}
where $d^1(n)(h_1)\coloneqq n\xcdot \ov{\Pi}^R(h_1) - n\xcdot h_1$ and, for $s>1$,
\begin{equation*}
\qquad\quad d_s(\beta)(\ov{\bh}_{1s}) \coloneqq \beta\bigl(\ov{\Pi}^R(h_1)\xcdot\ov{\bh}_{2s}\bigr) + \sum_{i=1}^{s-1} (-1)^i \beta\bigl(\ov{\bh}_{1,i-1}\ot_{H^{\!L}}\ov{h_ih_{i+1}}\ot_{H^{\!L}}\ov{\bh}_{i+2,s} \bigr) + (-1)^s \beta\bigl(\ov{\bh}_{1,s-1}\bigr) \xcdot h_s.
\end{equation*}
\end{enumerate}
\end{proposition}

\begin{proof} For $s\in \mathds{N}$ and $0\le i\le s$, let $d''_{si}\colon H^{\ot_k^{s+1}}\to H^{\ot_k^{s+1}}$ be the map defined by
$$
d''_{si}(\bx) \coloneqq \begin{cases} \ov{\Pi}^R(h_1)h_2\ot_k h_3\ot_k\ot\cdots\ot_k h_{s+1} & \text{if $i=0$,}\\
h_1\ot_k\cdots\ot_k h_{i-1}\ot_k h_ih_{i+1}\ot_k h_{i+2}\ot_k \cdots \ot_k h_{s+1} & \text{if $1\le i\le s$,}
\end{cases}
$$
where $\bx\coloneqq  h_1\ot_k\cdots\ot_k h_{s+1}$. For each $s>1$ and $0\le i\le s$, we let $d'_{si}\colon H^{\ot_k^{s+1}}\to \ov{H}^{\ot_{\!H^{\!L}}^{s-1}}\ot_{H^{\!L}} H$ denote the morphism induced by $d''_{si}$. Moreover we set $d'_{10} = d''_{10}$ and $d'_{11} = d''_{11}$.
We claim that

\begin{enumerate}[itemsep=0.7ex, topsep=1.0ex, label=(\arabic*)]

\item If $s=1$ and $h_1\in H^{\!L}$, then $d'_{10}(\bx) = d'_{11}(\bx)$,

\item If $s>1$ and $h_1\in H^{\!L}$, then $d'_{s0}(\bx) = d'_{s1}(\bx)$ and $d'_{si}(\bx) = 0$ for $1<i$,

\item If $s>1$ and $h_j\in H^{\!L}$ for some $j>1$, then $d'_{s,j-1}(\bx) = d'_{sj}(\bx)$ and $d'_{si}(\bx) = 0$ for $i\notin\{j-1,j\}$,

\item $d'_{si}(h_1 l\ot_k h_2\ot_k\cdots\ot_k h_{s+1}) = d'_{si}(h_1 \ot_k lh_2\ot_k\cdots\ot_k h_{s+1})$ for $0\le i\le s$ and $l\in H^{\!L}$,

\item For $0\le i\le s$, $1\le j\le s$ and $l\in H^{\!L}$, we have
\begin{equation}\label{aux3}
\qquad d'_{si}(h_1\ot_k\cdots\ot_k h_j l\ot_k h_{j+1}\ot_k\cdots\ot_k h_{s+1}) = d'_{si}(h_1\ot_k\cdots\ot_k h_j\ot_k l h_{j+1}\ot_k\cdots\ot_k h_{s+1}).
\end{equation}
\end{enumerate}
Items~(1) and the first assertion in item~(2) hold since $\ov{\Pi}^R(h) = h$, for all $h\in H^{\!L}$; while the second assertion in item~(2) is clear. We next prove item~(3). The fact that $d'_{sj}(\bx) = 0$, for $i>0$ and $i\notin\{j-1,j\}$, is trivial; while the equality $d'_{s,j-1}(\bx) = d'_{sj}(\bx)$ is straightforward. Finally if $j=2$, then $d'_{s0}(\bx) = 0$, since $\ov{\Pi}^R(h_1)h_2 \in H^{\!L}$. The case $i=0$ in item~(4) holds, since $\ov{\Pi}^R(h_1l)h_2 = \ov{\Pi}^R(h_1)lh_2$, by the third identity in Proposition~\ref{para buena def}; while the cases $i>0$ are straightforward. It remains to check item~(4). Equality~\eqref{aux3} for $i=0$ and $j=1$ holds, since $\ov{\Pi}^R(h_1l)h_2 = \ov{\Pi}^R(h_1)lh_2$, by the third identity in Proposition~\ref{para buena def}; while the other cases are trivial. This ends the proof of the claim. Consequently, $\sum_{i=0}^s (-1)^1 d'_{si}$ induces a morphism $d'_s\colon \ov{H}^{\ot_{\!H^{\!L}}^s}\ot_{H^{\!L}} H \to \ov{H}^{\ot_{\!H^{\!L}}^{s-1}}\ot_{H^{\!L}} H$,~for each $s\ge 1$. Consider the diagram
\begin{equation}\label{resol}
\begin{tikzpicture}[baseline=(current bounding box.center)]
\begin{scope}[yshift=0cm,xshift=0cm]
\matrix(BP1complex) [matrix of math nodes,row sep=0em, text height=1.5ex, text
depth=0.25ex, column sep=2.5em, inner sep=0pt, minimum height=5mm,minimum width =6mm]
{H^{\!R} & H & \ov{H}\ot_{H^{\!L}} H & \ov{H}^{\ot_{\!H^{\!L}}^2}\ot_{H^{\!L}} H & \ov{H}^{\ot_{\!H^{\!L}}^3}\ot_{H^{\!L}} H &\cdots,\\};
\draw[<-] (BP1complex-1-1) -- node[above=1pt,font=\scriptsize] {$\Pi^R$} (BP1complex-1-2);
\draw[<-] (BP1complex-1-2) -- node[above=1pt,font=\scriptsize] {$d'_1$} (BP1complex-1-3);
\draw[<-] (BP1complex-1-3) -- node[above=1pt,font=\scriptsize] {$d'_2$} (BP1complex-1-4);
\draw[<-] (BP1complex-1-4) -- node[above=1pt,font=\scriptsize] {$d'_3$} (BP1complex-1-5);
\draw[<-] (BP1complex-1-5) -- node[above=1pt,font=\scriptsize] {$d'_4$} (BP1complex-1-6);
\end{scope}
\end{tikzpicture}
\end{equation}
where $\ov{H}^{\ot_{\!H^{\!L}}^s}\ot_{H^{\!L}} H$ is a right $H$-module via the canonical action. Clearly the $d'_s$'s are right $H$-linear maps. Next we will prove that~\eqref{resol} is a chain complex. Assume that $s>2$. Since
\begin{align*}
&d''_{s-1,i}\xcirc d''_{sj} = d''_{s-1,j-1}\xcirc d''_{si} && \text{for $1\le i<j$,}\\
& d''_{s-1,0}\xcirc d''_{sj} = d''_{s-1,j-1}\xcirc d''_{s0} && \text{for $1<j$,}\\
\end{align*}
the composition $d'_{s-1}\xcirc d'_s$ is the map induced by $d''_{s-1,0}\xcirc d''_{s0} - d''_{s-1,0}\xcirc d''_{s1}$. Thus,
$$
d'_{s-1}\xcirc d'_s\bigl(\ov{\bh}_{1s}\ot_{H^{\!L}} h_{s+1}\bigr) = \ov{\ov{\Pi}^R\bigl(\ov{\Pi}^R(h_1)h_2\bigr)h_3} \ot_{H^{\!L}} \ov{\bh}_{4s}\ot_{H^{\!L}} h_{s+1} - \ov{\ov{\Pi}^R(h_1h_2)h_3} \ot_{H^{\!L}} \ov{\bh}_{4s}\ot_{H^{\!L}} h_{s+1} = 0,
$$
where the last equality follows from the first identity in~Propo\-sition~\ref{para buena def}. In order to finish the proof that~\eqref{resol} is a chain complex, we must check that $\Pi^R\xcirc d'_1 = 0$ and $d'_1\xcirc d'_2 = 0$. But, this follows from the first two identities in Proposition~\ref{para buena def}.
%
%
We claim that the family of morphisms
$$
\hbar_0\colon H^{\!R}\longrightarrow H,\qquad  \hbar_{s+1}\colon \ov{H}^{\ot_{\!H^{\!L}}^s}\ot_{H^{\!L}} H\longrightarrow \ov{H}^{\ot_{\!H^{\!L}}^{s+1}}\ot_{H^{\!L}} H\quad\text{($s\ge 0$),}
$$
given by $\hbar_0(h)\coloneqq h$ and $\hbar_{s+1}(\ov{\bh}_{1s}\ot_{H^{\!L}} h_{s+1})\coloneqq (-1)^{s+1} \ov{\bh}_{1,s+1}\ot_{H^{\!L}} 1$ for $s\ge 0$,
is a contracting homotopy of~\eqref{resol} as a complex of right $H^{\!L}$-modules. In fact, this follows immediately from the equalities
\begin{align*}
& \Pi^R\xcirc \hbar_0(h) = h, \qquad \hbar_0\xcirc \Pi^R(h) = \Pi^R(h),\qquad d'_1\xcirc \hbar_1(h) = -\ov{\Pi}^R(h) + h,\\
& \hbar_s \xcirc d'_s(\ov{\bh}_{1s}\ot_{H^{\!L}} h_{s+1}) = (-1)^s \ov{\ov{\Pi}^R(h_1)h_2}\ot_{H^{\!L}} \ov{\bh}_{3,s+1}\ot_{H^{\!L}} 1\\
& \phantom{\hbar_s \xcirc d'_s(\ov{\bh}_{1s}\ot_{H^{\!L}} h_{s+1})} + \sum_{i=1}^s (-1)^{s+i} \ov{\bh}_{1,i-1}\ot_{H^{\!L}}\ov{h_ih_{i+1}} \ot_{H^{\!L}}\ov{\bh}_{i+2,s+1} \ot_{H^{\!L}} 1
\intertext{and}
& d'_{s+1}\xcirc \hbar_{s+1}(\ov{\bh}_{1s}\ot_{H^{\!L}} h_{s+1}) = (-1)^{s+1}\ov{\ov{\Pi}^R(h_1)h_2}\ot_{H^{\!L}} \ov{\bh}_{3,s+1}\ot_{H^{\!L}} 1 \\
&\phantom{d'_{s+1}\xcirc \hbar_{s+1}(\ov{\bh}_{1s}\ot_{H^{\!L}} h_{s+1})} + \sum_{i=1}^s (-1)^{s+i+1} \ov{\bh}_{1,i-1}\ot_{H^{\!L}}\ov{h_ih_{i+1}} \ot_{H^{\!L}}\ov{\bh}_{i+2,s+1} \ot_{H^{\!L}} 1\\
&\phantom{d'_{s+1}\xcirc \hbar_{s+1}(\ov{\bh}_{1s}\ot_{H^{\!L}} h_{s+1})} + \ov{\bh}_{1s}\ot_{H^{\!L}} h_{s+1}.
\end{align*}
Note that, since $H^{\!L}$ is separable, the right $H^{\!L}$-modules $\ov{H}^{\ot_{\!H^{\!L}}^s}$ are projective . Hence, $\ov{H}^{\ot_{\!H^{\!L}}^s} \ot_{H^{\!L}} H$ is a right projective $H$-module for each $s\ge 0$ and, consequently,~\eqref{resol} is a projective resolution of $H^{\!R}$ as a right $H$-module. Now items~(1) and~(2) follows immediately from this and the fact that $\bigl(\ov{H}^{\ot_{\!H^{\!L}}^s}\ot_{H^{\!L}} H\bigr)\ot_H N\simeq \ov{H}^{\ot_{\!H^{\!L}}^s}\ot_{H^{\!L}} N$ and $\Hom_H\bigl(\ov{H}^{\ot_{\!H^{\!L}}^s}\ot_{H^{\!L}} H, N\bigr)\simeq \Hom_{\!H^{\!L}}\bigl(\ov{H}^{\ot_{\!H^{\!L}}^s},N\bigr)$.
\end{proof}

\subsection{Crossed products by weak Hopf algebras}\label{subsection: Crossed products by weak Hopf algebras}
Let $H$ be a weak-Hopf algebra, $A$ an algebra and $\rho\colon H\ot_k A\to A$\index{zr@$\rho$|dotfillboldidx} a linear map. For $h\in H$ and $a\in A$, we set $h\xcdot a\coloneqq \rho(h\ot_k a)$. We say that $\rho$ is a {\em weak measure of $H$ on $A$} if
\begin{align}\label{equal}
h\xcdot (aa') = (h^{(1)}\xcdot a)(h^{(2)}\xcdot a')\quad\text{for all $h\in H$ and $a,a'\in A$.}
\end{align}
From now on $\rho$ denotes a weak measure of $H$ on $A$. Let $\chi_{\rho}\colon H\ot_k A\longrightarrow A\ot_k H$\index{zr@$\rho$!zwa@$\chi_{\rho}$|dotfillboldidx} be the map defined by $\chi_{\rho}(h\ot_k a) \coloneqq h^{(1)}\xcdot a\ot_k h^{(2)}$. By equality~\eqref{equal} the triple $(A,H,\chi_{\rho})$ is a twisted space (see \cite{GGV1}*{Defi\-nition~1.6}). By \cite{GGV1}*{Sub\-section~1.2} we know that $A\ot_k H$ is a non unitary $A$-bimodule~via
$$
a'\xcdot (a\ot_k h) \coloneqq a'a\ot_k h\qquad\text{and}\qquad (a\ot_k h)\xcdot a' \coloneqq a(h^{(1)}\cdot a')\ot_k h^{(2)},
$$
and that  the map $\nabla_{\!\rho}\colon A\ot_k H\longrightarrow A\ot_k H$,\index{zr@$\rho$!zwaa@$\nabla_{\rho}$|dotfillboldidx} defined by
$\nabla_{\!\rho}(a\ot_k h) \coloneqq a\cdot \chi_{\rho}(h\ot_k 1_A)$, is a left and right $A$-linear idempotent. In the sequel we will write $a\times h\coloneqq \nabla_{\!\rho}(a\ot_k h)$\index{zzma@$a\times h$|dotfillboldidx}. It is easy to check that the $A$-subbimodule $A\times H\coloneqq \nabla_{\!\rho}(A\ot_k H)$ of $A\ot_k H$ is unitary. Let $\gamma\colon H\to A\times H$\index{zc@$\gamma$|dotfillboldidx}, $\nu\colon k\to A\ot_k H$\index{zn@$\nu$|dotfillboldidx} and $\jmath_{\nu}\colon A\to A\times H$,\index{zn@$\nu$!znc@$\jmath_{\nu}$|dotfillboldidx}
be the maps defined by $\gamma(h)\coloneqq 1_A\times h$, $\nu(\lambda)\coloneqq \lambda 1_A\times 1$ and $\jmath_{\nu}(a)\coloneqq a\times 1$, respectively. Given a morphism $f\colon H\ot_k H\to A$\index{fb@$f$|dotfillboldidx}, we define $\mathcal{F}_f\colon H\ot_k H\longrightarrow A\ot_k H\index{fb@$f$!fc@$\mathcal{F}_f$|dotfillboldidx}$ by $\mathcal{F}_f(h\ot_k l) \coloneqq  f(h^{(1)}\ot_k l^{(1)})\ot_k h^{(2)}l^{(2)}$. Assume that
$$
f(h\ot_k l) = f(h^{(1)}\ot_k l^{(1)})(h^{(2)}l^{(2)}\xcdot 1_A)\quad\text{for all $h,l\in H$.}
$$
By \cite{GGV1}*{Proposition~2.4} we know that $(A,H,\chi_{\rho},\mathcal{F}_{\hs f})$ is a crossed product system (see \cite{GGV1}*{Def\-i\-nition~1.7}). We say that $f$ satisfis the {\em twisted module condition} if
$$
f(h^{(1)}\ot_k l^{(1)})(h^{(2)}l^{(2)}\xcdot a)=(h^{(1)}\xcdot (l^{(1)}\xcdot a)) f(h^{(2)}\ot_k l^{(2)})\quad\text{for all $h,l\in H$ and $a\in A$,}
$$
and that $f$ {\em is a cocycle} if
$$
f(h^{(1)}\ot_k l^{(1)})f(h^{(2)}l^{(2)}\ot_k m) = (h^{(1)}\xcdot f(l^{(1)}\ot_k m^{(1)})) f(h^{(2)}\ot_k l^{(2)}m^{(2)})\quad\text{for all $h,l,m\in H$.}
$$
Let $E$ be $A\times H$ endowed with the multiplication map $\mu_E$ introduced in \cite{GGV1}*{Notation~1.9}. A direct computation using the twisted module condition shows that $(a\times h)(b\times l) = a(h^{(1)}\cdot b) f(h^{(2)}\ot_k l^{(1)})\times h^{(3)}l^{(2)}$.

\begin{theorem}\label{weak crossed prod} Assume that
\begin{enumerate}[itemsep=0.7ex,  topsep=1.0ex, label=\emph{(\arabic*)}]

\item $f(h\ot_k l) = f(h^{(1)}\ot_k l^{(1)})(h^{(2)}l^{(2)}\xcdot 1_A)$ for all $h,l\in H$,

\item $h\xcdot 1_A = (h^{(1)}\xcdot (1^{(1)}\cdot 1_A))f(h^{(2)} \ot_k 1^{(2)})$ for all $h\in H$,

\item $h\xcdot 1_A = (1^{(1)}\cdot 1_A)f(1^{(2)}\ot_k h)$ for all $h\in H$,

\item $a\times 1 = 1^{(1)}\xcdot a\ot_k 1^{(2)}$ for all $a\in A$,

\item $f$ is a cocycle that satisfies the twisted module condition.

\end{enumerate}
Then,
\begin{enumerate}[itemsep=0.7ex,  topsep=1.0ex, label=\emph{(\alph*)}]

\item $(A,H,\chi_{\rho},\mathcal{F}_{\hs f},\nu)$ is a crossed product system with preunit (see \cite{GGV1}*{Definition~1.11}),

\item $\mathcal{F}_f$ is a cocycle that satisfies the twisted module condition (see \cite{GGV1}*{Definitions~1.8}),

\item $\mu_E$ is left and right $A$-linear, associative and has unit $1_A\times 1$,

\item The morphism $\jmath_{\nu}\colon A\to E$ is left and right $A$-linear, multiplicative and unitary,

\item $\jmath_{\nu}(a)\bx = a\cdot \bx$ and $\bx\jmath_{\nu}(a) = \bx\cdot a$, for all $a\in A$ and $\bx\in E$,

\item $\chi_{\rho}(h\ot_k a) = \gamma(h)\jmath_{\nu}(a)$ and $\mathcal{F}_f(h\ot_k l) = \gamma(h)\gamma(l)$, for all $h,l\in H$ and $a\in A$.

\end{enumerate}

\end{theorem}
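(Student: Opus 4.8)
The plan is to treat the five hypotheses as exactly the input demanded by the theory of crossed product systems with preunit from \cite{GGV1}, and to split the six conclusions into two groups: conclusions~(1) and~(2) are the only ones that require genuine verification, while~(3)--(6) are formal consequences of the general structure theorem of \cite{GGV1} once~(1) and~(2) are in force. So the strategy is to verify the ``system with preunit'' property and the cocycle/twisted-module property of $\mathcal{F}_{f}$, and then quote the machinery.

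I would first dispose of the crossed-product-system part of~(1) and of conclusion~(2), which are both dictionary computations between $f$ and $\mathcal{F}_{f}$. It was already noted before the statement, via the weak measure identity~\eqref{equal}, that $(A,H,\chi_{\rho})$ is a twisted space, and that hypothesis~(1) is precisely the hypothesis of \cite{GGV1}*{Proposition~2.4} making $(A,H,\chi_{\rho},\mathcal{F}_{f})$ a crossed product system. For conclusion~(2) I would expand the defining equation $\mathcal{F}_{f}(h\ot_k l)=f(h^{(1)}\ot_k l^{(1)})\ot_k h^{(2)}l^{(2)}$ inside the cocycle and twisted-module conditions of \cite{GGV1}*{Definitions~1.8} and reduce them, using coassociativity, the multiplicativity $\Delta(hl)=\Delta(h)\Delta(l)$ and the counit axiom, to the corresponding statements for $f$, i.e.\ hypothesis~(5). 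The weak-bialgebra identities~\eqref{propiedad de 1} and~\eqref{propiedad de epsilon}, together with Proposition~\ref{le h en HR}, are what keep the idempotent factors $(hl\xcdot 1_A)$ under control during this reduction.

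The substance of the argument is the preunit half of conclusion~(1): checking that $\nu(\lambda)=\lambda 1_A\times 1$ satisfies the preunit axioms of \cite{GGV1}*{Definition~1.11}. Here each of hypotheses~(2),~(3),~(4) is tailored to one axiom. Hypothesis~(4), $a\times 1=1^{(1)}\xcdot a\ot_k 1^{(2)}$, describes the idempotent $\nabla_{\!\rho}$ on elements $a\times 1$ and is what lets me recognize $\jmath_{\nu}(a)=a\times 1$ as lying in the image of the projection; hypotheses~(3), $h\xcdot 1_A=(1^{(1)}\cdot 1_A)f(1^{(2)}\ot_k h)$, and~(2), $h\xcdot 1_A=(h^{(1)}\xcdot(1^{(1)}\cdot 1_A))f(h^{(2)}\ot_k 1^{(2)})$, are respectively the left and right compatibilities of $\nu$ with $\mathcal{F}_{f}$. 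Concretely I would substitute these into the product formula $(a\times h)(b\times l)=a(h^{(1)}\cdot b)f(h^{(2)}\ot_k l^{(1)})\times h^{(3)}l^{(2)}$ specialized to $a\times h=1_A\times 1$ and to $b\times l=1_A\times 1$, and simplify with coassociativity and the counit axiom, invoking Propositions~\ref{le h en HR} and~\ref{le h en HR0} whenever a Sweedler factor is forced into $H^{\!L}$ or $H^{\!R}$. I expect this to be the main obstacle: the preunit equations in the weak setting couple $\nabla_{\!\rho}$, the action $\rho$ and the cocycle $f$ at once, so the cancellations are delicate and have to be performed in the right order, and one must be careful that the factors $1^{(1)}\cdot 1_A$ are not accidentally discarded.

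With~(1) and~(2) established, $(A,H,\chi_{\rho},\mathcal{F}_{f},\nu)$ is a crossed product system with preunit whose cocycle satisfies the twisted-module condition, and I would read off~(3)--(6) directly from the structure theorem of \cite{GGV1}: the associativity of $\mu_E$ and the fact that $\nu(1)=1_A\times 1$ is a two-sided unit give~(3); the $A$-linearity, multiplicativity and unitality of $\jmath_{\nu}$ give~(4); the identities $\jmath_{\nu}(a)\bx=a\xcdot\bx$ and $\bx\jmath_{\nu}(a)=\bx\xcdot a$ expressing that $\jmath_{\nu}$ recovers the ambient $A$-bimodule structure give~(5); and the factorizations $\chi_{\rho}(h\ot_k a)=\gamma(h)\jmath_{\nu}(a)$ and $\mathcal{F}_{f}(h\ot_k l)=\gamma(h)\gamma(l)$ give~(6). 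None of these needs further weak-Hopf input beyond the identities already used, since each is a formal consequence of the definition of $\mu_E$ and the preunit property.
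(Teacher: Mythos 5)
Your proposal is correct and matches the paper's proof in substance: the paper disposes of the entire theorem by citing \cite{GGV1}*{Propositions~2.4, 2.6 and~2.10, and Theorems~1.12(7) and~2.11}, which is exactly the decomposition you identify (Proposition~2.4 for the crossed product system, the analogues of your hand verifications for the preunit and the cocycle/twisted-module property of $\mathcal{F}_f$, and the structure theorem for conclusions~(3)--(6)). The only difference is that you propose to re-derive the content of the cited Propositions~2.6 and~2.10 directly, which is harmless.
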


\begin{proof} By~\cite{GGV1}*{Propositions~2.4, 2.6 and~2.10, and Theorems 1.12(7) and~2.11}.
\end{proof}

In the rest of this subsection we assume that the hypotheses of Theorem~\ref{weak crossed prod} are fulfilled, and we say that $E$ is {\em the unitary crossed product of $A$ with $H$ associated with $\rho$ and $f$}. Note that by item~(10) of that theorem we have
\begin{equation}\label{equacion1}
\jmath_{\nu}(a)\gamma(h) = a\cdot \nabla_{\!\rho}(1_A\ot_k h) = \nabla_{\!\rho}(a\ot_k h) = a\times h.
\end{equation}
Moreover, by~\cite{GGV2}*{Equalities~(1.6) and~(1.7)} and the definitions of $\chi_{\rho}$ and $\mathcal{F}_f$, we have
\begin{equation}\label{gama iota y gama gama}
\gamma(h)\jmath_{\nu}(a) = \jmath_{\nu}(h^{(1)}\xcdot a) \gamma(h^{(2)}) \quad\text{and}\quad \gamma(h)\gamma(l) = \jmath_{\nu}\bigl(f(h^{(1)}\ot_k l^{(1)})\bigr) \gamma(h^{(2)}l^{(2)}).
\end{equation}

\subsubsection[The weak comodule structure of $E$]{The weak comodule structure of $\bm{E}$}
Let $B$ be a right $H$-comodule. The tensor product $B\ot_k B$ is a (not necessarily counitary) right $H$-comodule, via $\delta_{B\ot_k B}(b\ot_k c)\coloneqq b^{(0)}\ot_k c^{(0)}\ot_k b^{(1)}c^{(1)}$.

\begin{definition} A unitary associative algebra $B$, which is also a counitary right $H$-comodule, is a {\em right $H$-comodule algebra} if $\mu_B$ is right $H$-colinear and one of the following equivalent conditions is satisfied:
\begin{enumerate}[itemsep=1.0ex, topsep=1.0ex, label=(\arabic*)]

\item $1_B^{(0)}\ot_k 1_B^{(1)}\ot_k 1_B^{(2)} = 1_B^{(0)}\ot_k 1_B^{(1)}1^{(1)}\ot_k 1^{(2)}$.

\item $1_B^{(0)}\ot_k 1_B^{(1)}\ot_k 1_B^{(2)} = 1_B^{(0)}\ot_k 1^{(1)}1_B^{(1)}\ot_k 1^{(2)}$.

\item $b^{(0)}\ot_k \ov{\Pi}^{\hs R}(b^{(1)}) = b1_B^{(0)}\ot_k 1_B^{(1)}$ for all $b\in B$.

\item $b^{(0)}\ot_k \Pi^L(b^{(1)}) = 1_B^{(0)}b\ot_k 1_B^{(1)}$ for all $b\in B$.

\item $1_B^{(0)}\ot_k \ov{\Pi}^{\hs R}(1_B^{(1)}) = 1_B^{(0)}\ot_k 1_B^{(1)}$.

\item $1_B^{(0)}\ot_k \Pi^L(1_B^{(1)}) = 1_B^{(0)}\ot_k 1_B^{(1)}$.

\end{enumerate}
\end{definition}

\begin{proposition}[Comodule algebra structure on $E$]\label{E es H-comodulo algebra debil} Assume that the hypotheses of Theorem~\ref{weak crossed prod} are satisfied. Then $E$ is a weak $H$-comodule algebra via the map $\delta_E\colon E\to E\ot_k H$ de\-fined by
$$
\delta_E\Bigl(\sum a_i\ot_k h_i\Bigr)\coloneqq \sum \nabla_{\!\rho}\bigl(a_i\ot_k h_i^{(1)}\bigr)\ot_k h_i^{(2)}
= \sum (a_i\times h_i^{(1)}) \ot_k h_i^{(2)}.\index{zbbb@$\delta_E$|dotfillboldidx}
$$
\end{proposition}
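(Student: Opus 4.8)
The plan is to verify, in turn, the four ingredients packaged in the definition of a right $H$-comodule algebra: that $\delta_E$ is well defined on $E$, that $(E,\delta_E)$ is a counitary coassociative right $H$-comodule, that $\mu_E$ is right $H$-colinear, and finally one of the six equivalent conditions of Proposition~\ref{wbialgebras}. All four will flow from a single computation, so I would establish that first.

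The key identity I would prove is
\[
\delta_E(c\times m) = (c\times m^{(1)})\ot_k m^{(2)}\qquad\text{for all }c\in A,\ m\in H,
\]
valid for an arbitrary tensor $c\ot_k m$, not only for those already lying in $A\times H$. Writing $\Phi\coloneqq(\nabla_{\!\rho}\ot_k\ide_H)\xcirc(\ide_A\ot_k\Delta)$, this amounts to the equality $\Phi\xcirc\nabla_{\!\rho}=\Phi$ on $A\ot_k H$. Expanding $\nabla_{\!\rho}(c\ot_k m)=c(m^{(1)}\xcdot 1_A)\ot_k m^{(2)}$ and applying $\Phi$, the product $(m^{(1)}\xcdot 1_A)(m^{(2)}\xcdot 1_A)$ appears in the middle; by the weak measure property~\eqref{equal} with $a=a'=1_A$ it collapses (after reindexing) to $m^{(1)}\xcdot 1_A$, which returns $\Phi(c\ot_k m)$. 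This is the technical heart of the argument. As an immediate corollary, since every element of $E$ is a sum $\sum a_i\times h_i=\nabla_{\!\rho}\bigl(\sum a_i\ot_k h_i\bigr)$, the relation $\Phi=\Phi\xcirc\nabla_{\!\rho}$ shows that $\Phi$ vanishes on $\ker\nabla_{\!\rho}$, so the value $\sum (a_i\times h_i^{(1)})\ot_k h_i^{(2)}$ is independent of the chosen representative; that is, $\delta_E$ is well defined and takes values in $E\ot_k H$.

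With the key identity in hand the comodule axioms are routine. Counitality follows from $(\ide_E\ot_k\epsilon)\delta_E(a\times h)=(a\times h^{(1)})\epsilon(h^{(2)})=a\times h$, using that $\Delta$ is counitary and $\nabla_{\!\rho}$ is linear. Coassociativity follows from the key identity applied twice together with coassociativity of $\Delta$: both $(\delta_E\ot_k\ide_H)\delta_E$ and $(\ide_E\ot_k\Delta)\delta_E$ send $a\times h$ to $(a\times h^{(1)})\ot_k h^{(2)}\ot_k h^{(3)}$. For the colinearity of $\mu_E$, namely $\delta_E(xy)=x^{(0)}y^{(0)}\ot_k x^{(1)}y^{(1)}$, I would take $x=a\times h$ and $y=b\times l$, compute the product by the formula $(a\times h)(b\times l)=a(h^{(1)}\xcdot b)f(h^{(2)}\ot_k l^{(1)})\times h^{(3)}l^{(2)}$ recorded just before Theorem~\ref{weak crossed prod}, and apply the key identity; since $\Delta$ is multiplicative one has $\Delta(h^{(3)}l^{(2)})=h^{(3)}l^{(2)}\ot_k h^{(4)}l^{(3)}$, and both sides reduce to $\bigl(a(h^{(1)}\xcdot b)f(h^{(2)}\ot_k l^{(1)})\times h^{(3)}l^{(2)}\bigr)\ot_k h^{(4)}l^{(3)}$.

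Finally, for the weak comodule algebra condition I would check Proposition~\ref{wbialgebras}(6). By Theorem~\ref{weak crossed prod}(8) the unit of $E$ is $1_E=1_A\times 1=\gamma(1)$, so the key identity gives $1_E^{(0)}\ot_k 1_E^{(1)}=\delta_E(\gamma(1))=\gamma(1^{(1)})\ot_k 1^{(2)}$. Now $1=\Pi^L(1)\in H^{\!L}$, since $\Pi^L(1)=\epsilon(1^{(1)})1^{(2)}=1$, and by Proposition~\ref{le h en HR0} we have $\Delta(1)\in H\ot_k H^{\!L}$, whence $1^{(2)}\in H^{\!L}$ and therefore $\Pi^L(1^{(2)})=1^{(2)}$, because $\Pi^L$ is the identity on its image $H^{\!L}$. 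Consequently $1_E^{(0)}\ot_k\Pi^L(1_E^{(1)})=\gamma(1^{(1)})\ot_k\Pi^L(1^{(2)})=\gamma(1^{(1)})\ot_k 1^{(2)}=1_E^{(0)}\ot_k 1_E^{(1)}$, which is exactly Proposition~\ref{wbialgebras}(6). Together with the unitary associative algebra structure furnished by Theorem~\ref{weak crossed prod}(8), this completes the verification. I expect the main obstacle to be the key identity: it is the point where the weak measure property is used, and it is simultaneously responsible for well-definedness, coassociativity and multiplicativity, so getting its Sweedler bookkeeping right is what controls the whole proof.
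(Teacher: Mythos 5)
Your proposal is correct. Note, however, that the paper does not actually prove this proposition in the text: its ``proof'' is a citation to \cite{GGV1}*{Proposition~2.27}, so there is no in-paper argument to compare against step by step. What you supply is a self-contained verification, and it is organized well: the identity $\Phi\xcirc\nabla_{\!\rho}=\Phi$ with $\Phi=(\nabla_{\!\rho}\ot_k\ide_H)\xcirc(\ide_A\ot_k\Delta)$ is indeed the right lever. It follows from $\nabla_{\!\rho}(c\ot_k m)=c(m^{(1)}\xcdot 1_A)\ot_k m^{(2)}$ together with the weak measure property applied to $1_A\cdot 1_A$, it yields $\delta_E(c\times m)=(c\times m^{(1)})\ot_k m^{(2)}$ on all of $A\ot_k H$, and from that single identity counitality, coassociativity, and the colinearity of $\mu_E$ (via the product formula $(a\times h)(b\times l)=a(h^{(1)}\xcdot b)f(h^{(2)}\ot_k l^{(1)})\times h^{(3)}l^{(2)}$ and multiplicativity of $\Delta$) all follow by routine Sweedler bookkeeping. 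Your verification of condition~(6) of Proposition~\ref{wbialgebras} is also sound: $1=\Pi^L(1)\in H^{\!L}$, so $\Delta(1)\in H\ot_k H^{\!L}$ by Proposition~\ref{le h en HR0} and $\Pi^L$ fixes the second leg. The only cosmetic remark is that well-definedness of $\delta_E$ is less delicate than you suggest, since $E$ is literally a subspace of $A\ot_k H$ and $\Phi$ is a globally defined linear map whose values $\nabla_{\!\rho}(a_i\ot_k h_i^{(1)})\ot_k h_i^{(2)}$ automatically land in $E\ot_k H$; your argument covers the stronger reading (independence of a preimage under $\nabla_{\!\rho}$) and so is harmless.
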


\begin{proof} See~\cite{GGV1}*{Proposition~2.27}.
\end{proof}

\subsection{Weak crossed products of weak module algebras}\label{subsection: Weak crossed products of weak module algebras by weak bialgebras in which the unit cocommutes}
Let $H$ be a weak bialgebra, $A$ a $k$-algebra and $\rho\colon H\ot_k A\to A$ a map. In this subsection we study the weak crossed products of $A$ with $H$ in which $A$ is a left weak $H$-module algebra.

\begin{definition}\label{def modulo algebra debil} We say that $A$ is a {\em left weak $H$-module algebra} via $\rho$, if
\begin{enumerate}[itemsep=0.7ex, topsep=1.0ex, label=(\arabic*)]

\item $1\xcdot a = a$ for all $a\in A$,

\item $h\xcdot (aa') = (h^{(1)}\xcdot a)(h^{(2)}\xcdot a')$ for all $h\in H$ and $a,a'\in A$,

\item $h\xcdot (l\xcdot 1_A) = (hl)\xcdot 1_A$ for all $h,l\in H$ and $a\in A$,

\end{enumerate}
In this case we say that $\rho$ is a weak left action of $H$ on $A$. If we also have
\begin{equation}\label{es modulo algebra}
h\xcdot (l\xcdot a) = (hl)\xcdot a\qquad\text{for all $h,l\in H$ and all $a\in A$},
\end{equation}
then we say that $A$ is a {\em left $H$-module algebra}.
\end{definition}

\begin{proposition}\label{modulo algebra debil} For each weak $H$-module algebra $A$ the following assertions hold:

\begin{enumerate}[itemsep=0.7ex, topsep=1.0ex, label=\emph{(\arabic*)}]

\item $\Pi^L(h)\xcdot a = (h\xcdot 1_A)a$ for all $h\in H$ and $a\in A$.

\item $\ov{\Pi}^L(h)\xcdot a = a(h\xcdot 1_A)$ for all $h\in H$ and $a\in A$.

\item $\Pi^L(h)\xcdot 1_A = h\xcdot 1_A$ for all $h\in H$.

\item $\ov{\Pi}^L(h)\xcdot 1_A = h\xcdot 1_A$ for all $h\in H$.

\item $h\xcdot (l\xcdot 1_A) = (h^{(1)}\xcdot 1_A)\epsilon(h^{(2)}l)$ for all $h,l\in H$.

\item $h\xcdot (l\xcdot 1_A) = (h^{(2)}\xcdot 1_A)\epsilon(h^{(1)}l)$ for all $h,l\in H$.

\end{enumerate}

\end{proposition}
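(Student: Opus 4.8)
The plan is to reduce all six statements to two ``master'' identities, one for $H^{\!L}$ and one for $H^{\!R}$. The structural input I would record first is the coproduct of elements in the images of the projections. From \eqref{propiedad de 1} one computes $\Delta(\Pi^L(h)) = \Pi^L(h)1^{(1)}\ot 1^{(2)}$, and since $1^{(1)}\in H^{\!R}$ (Proposition~\ref{le h en HR0}) commutes with $\Pi^L(h)\in H^{\!L}$ (Proposition~\ref{conmut1}), this may be rewritten as $\Delta(l) = 1^{(1)}l\ot 1^{(2)}$ for every $l\in H^{\!L}$; dually $\Delta(m) = 1^{(1)}\ot 1^{(2)}m$ for every $m\in H^{\!R}$. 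I would also note the convolution identities $\Pi^L*\ide = \ide$ and $\ide*\Pi^R = \ide$: applying $\epsilon\ot\ide$ (resp.\ $\ide\ot\epsilon$) to $\Delta(1)\Delta(h) = \Delta(1h) = \Delta(h)$ gives $\Pi^L(h^{(1)})h^{(2)} = h$ and $h^{(1)}\Pi^R(h^{(2)}) = h$, with no appeal to the antipode.

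Next I would prove the master identities: \emph{for $l\in H^{\!L}$, $l\xcdot a = (l\xcdot 1_A)a$, and for $m\in H^{\!R}$, $m\xcdot a = a(m\xcdot 1_A)$}. For the first, expand by Definition~\ref{def modulo algebra debil}(2) with $a = 1_A a$, insert the special coproduct, and use Definition~\ref{def modulo algebra debil}(3):
\[
l\xcdot a = (l^{(1)}\xcdot 1_A)(l^{(2)}\xcdot a) = (1^{(1)}l\xcdot 1_A)(1^{(2)}\xcdot a) = \bigl(1^{(1)}\xcdot(l\xcdot 1_A)\bigr)(1^{(2)}\xcdot a) = 1\xcdot\bigl((l\xcdot 1_A)a\bigr) = (l\xcdot 1_A)a,
\]
the last two steps being Definition~\ref{def modulo algebra debil}(2) read backwards and Definition~\ref{def modulo algebra debil}(1); the $H^{\!R}$-identity is the mirror image, expanding $a = a1_A$ and keeping the free leg of $\Delta(1)$ on the right. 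Since $\Pi^L(h)\in H^{\!L}$ and $\ov{\Pi}^L(h)\in H^{\!R}$, these identities give $\Pi^L(h)\xcdot a = (\Pi^L(h)\xcdot 1_A)a$ and $\ov{\Pi}^L(h)\xcdot a = a(\ov{\Pi}^L(h)\xcdot 1_A)$, so that statements~(1) and~(2) reduce \emph{exactly} to $\Pi^L(h)\xcdot 1_A = h\xcdot 1_A$ and $\ov{\Pi}^L(h)\xcdot 1_A = h\xcdot 1_A$, that is, to~(3) and~(4).

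The main obstacle is precisely~(3) and~(4): one must show that $h\mapsto h\xcdot 1_A$ factors through the projections. This is the one place where the axioms of Definition~\ref{def modulo algebra debil} do not suffice on their own, since every manipulation of $h\xcdot 1_A$ or $\Pi^L(h)\xcdot 1_A$ using only those axioms turns out to be reversible (hence circular). Using $\Pi^L*\ide = \ide$ and Definition~\ref{def modulo algebra debil}(3), together with the master $H^{\!L}$-identity, I would get
\[
h\xcdot 1_A = \Pi^L(h^{(1)})\xcdot(h^{(2)}\xcdot 1_A) = (\Pi^L(h^{(1)})\xcdot 1_A)(h^{(2)}\xcdot 1_A),
\]
and the content of~(3) is the collapse of this right-hand side to $\Pi^L(h)\xcdot 1_A$. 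I expect this step to require genuinely new input beyond the reversible moves above — most cleanly the antipode relations ($\Pi^L = \ide*S$, Proposition~\ref{S y Pi}) combined with the idempotence of $\Pi^L$, or otherwise a non-reversible use of \eqref{propiedad de 1} and \eqref{propiedad de epsilon}. Statement~(4) is handled symmetrically through $\ov{\Pi}^L(h)\in H^{\!R}$, the master $H^{\!R}$-identity and Remark~\ref{prop nec}.

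Finally,~(5) and~(6) follow formally from~(3). Indeed,~(3) together with the definition of $\Pi^L$ yields $g\xcdot 1_A = \epsilon(1^{(1)}g)(1^{(2)}\xcdot 1_A)$ for all $g$; applying this to $g = h^{(1)}$ and merging the two counit factors via \eqref{propiedad de epsilon} gives
\[
(h^{(1)}\xcdot 1_A)\epsilon(h^{(2)}l) = \epsilon(1^{(1)}h^{(1)})\epsilon(h^{(2)}l)(1^{(2)}\xcdot 1_A) = \epsilon(1^{(1)}hl)(1^{(2)}\xcdot 1_A) = (hl)\xcdot 1_A = h\xcdot(l\xcdot 1_A),
\]
which is~(5); statement~(6) is identical using the other equality in \eqref{propiedad de epsilon}. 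As a consistency check, setting $h = 1$ in~(5) and~(6) recovers~(4) and~(3) respectively, so that (3)--(6) stand or fall together and it is enough to settle any one of them directly — which is exactly the difficulty isolated in the previous paragraph.
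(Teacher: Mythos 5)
Your preparatory work is sound: the computation of $\Delta(l)$ for $l\in H^{\!L}$ and $\Delta(m)$ for $m\in H^{\!R}$ from~\eqref{propiedad de 1}, the two master identities $l\xcdot a=(l\xcdot 1_A)a$ and $m\xcdot a=a(m\xcdot 1_A)$, the resulting equivalences (1)$\Leftrightarrow$(3) and (2)$\Leftrightarrow$(4), and the derivation of (5) and (6) from (3) are all correct, and together with your observation that (5) and (6) at $h=1$ give back (4) and (3), they reduce the entire proposition to item (3). But that is exactly where the proposal stops: you never prove any one of (3)--(6), and you say so explicitly (``I expect this step to require genuinely new input''). That is a genuine gap, and not a peripheral one --- item (3) is the statement that carries all the content once the equivalences are in place, and a proof that reduces everything to an unproved claim is not a proof. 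Your heuristic that every manipulation of $h\xcdot 1_A$ using only Definition~\ref{def modulo algebra debil} is ``reversible'' is suggestive but is not an argument; since the proposition asserts (3) for \emph{every} weak $H$-module algebra, it must follow from those axioms together with the weak Hopf algebra structure, and the task was to exhibit such a derivation rather than to conjecture what ingredients it needs.

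For comparison: the paper does not give a self-contained argument either. Its proof consists of citing \cite{CDG} for the equivalence of the six items --- which is essentially what your master identities reconstruct --- and \cite{GGV1}*{Proposition~4.2} for the proof of item (3), attributed to Alonso \'Alvarez and Gonz\'alez Rodr\'iguez. So you have correctly isolated the crux, and your equivalence argument is a legitimate substitute for the first citation; but the proposal as written does not discharge the second, which is the part with actual mathematical content. One small correction along the way: Proposition~\ref{S y Pi} states $\Pi^L=S\xcirc\ov{\Pi}^L$, not $\Pi^L=\ide*S$; the convolution identity $h^{(1)}S(h^{(2)})=\Pi^L(h)$ is part of the definition of the antipode, not of that proposition.
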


\begin{proof} In \cite{CDG} it was proven that these items are equivalent and the proof of item~(3) it was kindly communicated to us by Jos\'e Nicanor Alonso \'Alvarez and Ram\'on Gonz\'alez Rodr\'iguez (see \cite{GGV1}*{Proposition~4.2}).
\end{proof}

\begin{example} The algebra $H^{\!L}$ is a left $H$-module algebra via $h\xcdot l\coloneqq \Pi^L(hl)$. In fact item~(1) of Definition~\ref{def modulo algebra debil} is trivial. By~\cite{BNS1}*{equality~(2.7a)} we have
\begin{equation}\label{aux1}
h^{(1)}l^{(1)}\ot_k h^{(2)}l^{(2)}=h^{(1)}l\ot_k h^{(2)}\quad\text{for all $h\in H$ and $l\in H^{\!L}$} .
\end{equation}
Using this, equality~\cite{BNS1}*{equality~(2.5a)} and the fact that $\Pi^L*\ide = \ide$, we obtain
$$
\Pi^L(h^{(1)}l)\Pi^L(h^{(2)}m)\! =\! \Pi^L\bigl(\Pi^L(h^{(1)}l^{(1)})\Pi^L(h^{(2)}l^{(1)}m)\bigr)\! =\! \Pi^L\bigr(\Pi^L(h^{(1)}l^{(1)})h^{(2)}l^{(1)}m)\bigr)\! =\! \Pi^L(hlm),
$$
$h\in H$ and $l,m\in H^{\!L}$. This proves item~(2). Finally equality~\eqref{es modulo algebra} follows from~\cite{BNS1}*{equality~(2.5a)}. This example is known as the trivial representation.
\end{example}

\begin{example}\label{cociclo trivial} Let $A$ be a left $H$-module algebra. The map $f(h\ot_k l)\coloneqq hl\xcdot 1_A$, named the {\em triv\-ial cocycle of $A$}, satisfies the hypotheses of Theorem~\ref{weak crossed prod}.
\end{example}

\begin{definition}\label{estable bajo rho} A subalgebra $K$ of $A$ is {\em stable under $\rho$} if $h\xcdot \lambda\in K$ for all $h\in H$ and $\lambda\in K$.
\end{definition}

\begin{remark}\label{estable bajo rho = estable bajo chi} A subalgebra $K$ of $A$ is stable under $\rho$ if and only if $\chi_{\rho}(H\ot_k K)\subseteq K\ot_k H$.
\end{remark}

\begin{example}\label{ejemlo subalgebra estable} Let $K\coloneqq \{h\xcdot 1_A: h\in H\}$. By Definition~\ref{def modulo algebra debil} and Proposition~\ref{modulo algebra debil}, we know that $K$ is a subalgebra of $A$, which is stable under $\rho$, and $K = \{h\xcdot 1_A: h\in H^{\!L}\} = \{h\xcdot 1_A: h\in H^{\!R}\}$. Moreover, the map $\pi^L\colon H^{\!L}\to K$, defined by $\pi^L(h)\coloneqq h\xcdot 1_A$, is a surjective morphism of algebras, and the map $\pi^R\colon H^{\!R}\to K$, defined by the same formula, is a surjective anti-mor\-phism of algebras. By \cite{BNS1}*{Proposition~2.11}, this implies that $K$ is separable.
\end{example}

\begin{remark}\label{incluido} Each subalgebra of $A$, which is stable under $\rho$, includes $\{h\xcdot 1_A: h\in H\}$.
\end{remark}

From here until the end of this subsection $A$ is a left weak $H$-module algebra and $E$ is the unitary crossed product of $A$ by $H$ associated with $\rho$ and a map $f\colon H\ot_k H\to A$. Thus, we assume that the hypotheses of Theorem~\ref{weak crossed prod} are fulfilled. In particular $\nu$, $\jmath_{\nu}$ and $\gamma$ are as at the beginning of Subsection~\ref{subsection: Crossed products by weak Hopf algebras}. By~\cite{BNS1}*{Propo\-sition~2.4} we know that $H^{\!L}H^{\!R}$ is a subalgebra $H$.

\begin{proposition}\label{fundamental'} The map $f$ factorizes throughout $H\ot_{H^{\!L}H^{\!R}} H$.
\end{proposition}

\begin{proof} It suffices to prove that $f(hl\ot_k m) = f(hl\ot_k m)$ for all $h,m\in H$ and $l\in H^{\!L}\cup H^{\!R}$. For $l\in H^{\!R}$ this follows from~\cite{GGV1}*{Proposition~2.7}. Suppose now that $l\in H^{\!L}$. By Definition~\ref{def modulo algebra debil}(3), the maps $u_2,v_2\colon H\ot_k H\to A$,~de\-fined by $u_2(h\ot_k l)\coloneqq hl\cdot 1_A$ and $v_2(h\ot_k l)\coloneqq h\cdot (l\cdot 1_A)$ coincide. Consequently $f(hl\ot_k m) = f(hl\ot_k m)$ by~\cite{GGV1}*{Proposition~2.8 and Remarks~2.16 and~2.17}.
\end{proof}

\subsubsection{Invertible cocycles and cleft extensions}
Let $u_2\colon H\ot_k H\to A$ be the map defined by $u_2(h\ot_k l) \coloneqq hl\xcdot 1_A$. We say that the cocycle $f$ is invertible if there exists a (unique) map $f^{-1}\colon H\ot_k H\to A$\index{fbz@$f^{-1}$|dotfillboldidx} such that $u_2*f^{-1} = f^{-1}*u_2 = f^{-1}$ and $f^{-1}*f = f*f^{-1} = u_2$.





\begin{remark}\label{neutro del otro lado} Condition~(1) in Theorem~\ref{weak crossed prod} says that $f*u_2 = f$. By \cite{GGV1}*{Remark~2.17} and the comment above \cite{GGV1}*{Proposition~4.4}, this implies that $u_2*f = f$.
\end{remark}

\begin{example}\label{trivial implica inversible} Assume that $A$ is a left $H$-module algebra and that $f$ is the trivial cocycle. By the previous remark and Definition~\ref{def modulo algebra debil}(2), the cocycle $f$ is invertible and $f^{-1} = f$.
\end{example}

\begin{definition} A map $g\colon H\ot_kH\to A$ is {\em normal} if $g(1\ot_k h) = g(h\ot_k 1) = h\xcdot 1_A$ for all~$h$.
\end{definition}

By \cite{GGV1}*{Proposition~2.17 and Remark~2.19} we know that the cocycle $f$ is normal if and only if the~equal\-ities in items~(2) and~(3) of Theorem~\ref{weak crossed prod} hold. When $f$ is invertible, we define $\gamma^{-1}\colon H\to E$\index{zc@$\gamma$!zc6@$\gamma^{-1}$|dotfillboldidx} by
\begin{equation}\label{gamma{-1}}
\gamma^{-1}(h)\coloneqq \jmath_{\nu}\bigl(f^{-1}\bigl(S(h^{(2)})\ot_k h^{(3)}\bigr)\bigr)\gamma\bigl(S(h^{(1)})\bigr).
\end{equation}
Assume for instance that $A$ is a left $H$-module algebra and that $f$ is the trivial cocycle. By Example~\ref{trivial implica inversible} and the fact that $S*\ide = \Pi^R$, we have
$$
\gamma^{-1}(h) = \jmath_{\nu}\bigl(S(h^{(2)})h^{(3)}\xcdot 1_A\bigr)\gamma\bigl(S(h^{(1)})\bigr) = \jmath_{\nu}\bigl(\Pi^R(h^{(2)}) \xcdot 1_A\bigr)\gamma\bigl(S(h^{(1)})\bigr).
$$
Using this, Proposition~\ref{modulo algebra debil}(3) and the fact that $\Pi^L\xcirc \Pi^R = \Pi^L\xcirc \ov{\Pi}^L\xcirc S = \Pi^L\xcirc S$, we obtain
$$
\gamma^{-1}(h) =\jmath_{\nu}\bigl(\Pi^L(S(h^{(2)}))\xcdot 1_A\bigr)\gamma\bigl(S(h^{(1)})\bigr) =\gamma\bigl(\Pi^L(S(h^{(2)})) S(h^{(1)})\bigr) =\gamma\bigl(S(h)\bigr),
$$
where the second equality holds by~\cite{GGV1}*{Propositions~2.22 and~4.5}; and the last one, since $S$ is anticomultiplicative and $\Pi^L*\ide = \ide$.

\subsection{Mixed complexes}\label{subsection: Mixed complexes}
In this subsection we recall briefly the notion of mixed complex. For more details about this concept we refer to~\cite{B} and~\cite{K}.

\smallskip

A {\em mixed complex} $\mathcal{X}\coloneqq  (X,b,B)$\index{xz@$\mathcal{X}$|dotfillboldidx} is a graded $k$-module $(X_n)_{n\ge 0}$, endowed with morphisms
$$
b\colon X_n\longrightarrow X_{n-1}\qquad\text{and}\qquad B\colon X_n\longrightarrow X_{n+1},
\index{ba@$b$|dotfillboldidx}\index{bb@$B$|dotfillboldidx}
$$
such that $b\xcirc b = 0$, $B\xcirc B = 0$ and $B\xcirc b + b\xcirc B = 0$. A {\em morphism of mixed complexes} $g\colon (X,b,B)\longrightarrow (Y,d,D)$ is a family of maps $g\colon X_n\to Y_n$, such that $d\xcirc g = g\xcirc b$ and $D\xcirc g= g\xcirc B$. Let $u$ be a degree~$2$ variable. A mixed complex $\mathcal{X}\coloneqq  (X,b,B)$ determines a double complex
$$
\begin{tikzpicture}
\begin{scope}[yshift=-0.47cm,xshift=-6cm]
\draw (0.5,0.5) node {$\BP(\mathcal{X})=$};
\end{scope}
\begin{scope}
\matrix(BPcomplex) [matrix of math nodes,row sep=2.5em, text height=1.5ex, text
depth=0.25ex, column sep=2.5em, inner sep=0pt, minimum height=5mm,minimum width =9.5mm]
{&\vdots &\vdots &\vdots &\vdots\\
\cdots & X_3 u^{-1} & X_2 u^0 & X_1 u^{} & X_0 u^2\\
\cdots & X_2 u^{-1} & X_1 u^0 & X_0 u\\
\cdots & X_1 u^{-1} & X_0 u^0\\
\cdots & X_0 u^{-1}\\};
\draw[->] (BPcomplex-1-2) -- node[right=1pt,font=\scriptsize] {$b$} (BPcomplex-2-2);
\draw[->] (BPcomplex-1-3) -- node[right=1pt,font=\scriptsize] {$b$} (BPcomplex-2-3);
\draw[->] (BPcomplex-1-4) -- node[right=1pt, font=\scriptsize] {$b$} (BPcomplex-2-4);
\draw[->] (BPcomplex-1-5) -- node[right=1pt, font=\scriptsize] {$b$} (BPcomplex-2-5);
\draw[<-] (BPcomplex-2-1) -- node[above=1pt,font=\scriptsize] {$B$} (BPcomplex-2-2);
\draw[<-] (BPcomplex-2-2) -- node[above=1pt,font=\scriptsize] {$B$} (BPcomplex-2-3);
\draw[<-] (BPcomplex-2-3) -- node[above=1pt,font=\scriptsize] {$B$} (BPcomplex-2-4);
\draw[<-] (BPcomplex-2-4) -- node[above=1pt,font=\scriptsize] {$B$} (BPcomplex-2-5);
\draw[->] (BPcomplex-2-2) -- node[right=1pt,font=\scriptsize] {$b$} (BPcomplex-3-2);
\draw[->] (BPcomplex-2-3) -- node[right=1pt,font=\scriptsize] {$b$} (BPcomplex-3-3);
\draw[->] (BPcomplex-2-4) -- node[right=1pt, font=\scriptsize] {$b$} (BPcomplex-3-4);
\draw[<-] (BPcomplex-3-1) -- node[above=1pt,font=\scriptsize] {$B$} (BPcomplex-3-2);
\draw[<-] (BPcomplex-3-2) -- node[above=1pt,font=\scriptsize] {$B$} (BPcomplex-3-3);
\draw[<-] (BPcomplex-3-3) -- node[above=1pt,font=\scriptsize] {$B$} (BPcomplex-3-4);
\draw[->] (BPcomplex-3-2) -- node[right=1pt,font=\scriptsize] {$b$} (BPcomplex-4-2);
\draw[->] (BPcomplex-3-3) -- node[right=1pt,font=\scriptsize] {$b$} (BPcomplex-4-3);
\draw[<-] (BPcomplex-4-1) -- node[above=1pt,font=\scriptsize] {$B$} (BPcomplex-4-2);
\draw[<-] (BPcomplex-4-2) -- node[above=1pt,font=\scriptsize] {$B$} (BPcomplex-4-3);
\draw[->] (BPcomplex-4-2) -- node[right=1pt,font=\scriptsize] {$b$} (BPcomplex-5-2);
\draw[<-] (BPcomplex-5-1) -- node[above=1pt,font=\scriptsize] {$B$} (BPcomplex-5-2);
\end{scope}
\begin{scope}[yshift=-0.47cm,xshift=4cm]
\draw (0.5,0.5) node {,};
\end{scope}
\end{tikzpicture}
\index{xz@$\mathcal{X}$!bp@$\BP(\mathcal{X})$|dotfillboldidx}
$$
where $b(\bx u^i)\coloneqq  b(\bx)u^i$ and $B(\bx u^i)\coloneqq  B(\bx)u^{i-1}$. By deleting the positively numbered columns we obtain a subcomplex $\BN(\mathcal{X})$\index{xz@$\mathcal{X}$!bn@$\BN(\mathcal{X})$|dotfillboldidx} of $\BP(\mathcal{X})$. Let $\BN'(\mathcal{X})$\index{xz@$\mathcal{X}$!bn'@$\BN'(\mathcal{X})$|dotfillboldidx} be the kernel of the canonical surjection from $\BN(\mathcal{X})$ to $(X,b)$. The quotient double complex $\BP(\mathcal{X})/\BN'(\mathcal{X})$ is denoted by $\BC(\mathcal{X})$\index{xz@$\mathcal{X}$!bc@$\BC(\mathcal{X})$|dotfillboldidx}. The homology groups $\HC_*(\mathcal{X})$,\index{xz@$\mathcal{X}$!hc@$\HC_*(\mathcal{X})$|dotfillboldidx} $\HN_*(\mathcal{X})$\index{xz@$\mathcal{X}$!hn@$\HN_*(\mathcal{X})$|dotfillboldidx} and $\HP_*(\mathcal{X})$,\index{xz@$\mathcal{X}$!hb@$\HP_*(\mathcal{X})$|dotfillboldidx} of the total complexes of $\BC(\mathcal{X})$, $\BN(\mathcal{X})$ and $\BP(\mathcal{X})$, respectively, are called the {\em cyclic}, {\em negative} and {\em periodic homology groups} of $\mathcal{X}$. The homology $\HH_*(\mathcal{X})$,\index{xz@$\mathcal{X}$!hh@$\HH_*(\mathcal{X})$|dotfillboldidx} of $(X,b)$, is called the {\em Hochschild homology} of $\mathcal{X}$. Finally, it is clear that a morphism $f\colon\mathcal{X}\to\mathcal{Y}$ of mixed complexes induces a morphism from the double complex $\BP(\mathcal{X})$ to the double complex $\BP(\mathcal{Y})$.

\smallskip

\begin{notation}\label{tensor circular} Let $C$ be an algebra. If $K$ is a subalgebra of $C$ we will say that $C$ is a $K$-algebra. Given a $K$-bi\-module $M$, we let $M\ot_{\hs K}$\index{ma@$M\ot$|dotfillboldidx} denote the quotient $M/[M,K]$, where $[M,K]$ is the $k$-submodule of $M$ generated by all the commutators $m\lambda -\lambda m$, with $m\in M$ and $\lambda\in K$. Moreover, for $m\in M$, we let $[m]$\index{mb@$[m]$|dotfillboldidx} denote the class of $m$ in $M\ot_{\hs K}$. \end{notation}

By definition, the {\em normalized mixed complex of the $K$-algebra $C$} is $(C\ot_{\hs K}\ov{C}^{\ot_{\hs K}^*}\ot_{\hs K},b_*,B_*)$, where $\ov{C}\coloneqq C/K$, $b_*$ is the canonical Hochschild boundary map and the Connes operator $B_*$ is given by
$$
B\bigl([c_0\ot_{\hs K}\cdots \ot_{\hs K} c_r]\bigr)\coloneqq \sum_{i=0}^r (-1)^{ir} [1\ot_{\hs K} c_i\ot_{\hs K}\cdots \ot_{\hs K} c_r\ot_{\hs K} c_0\ot_{\hs K} c_1\ot_{\hs K} \cdots \ot_{\hs K} c_{i-1}].
$$
The {\em cyclic}, {\em negative}, {\em periodic} and {\em Hochschild homology groups} $\HC^K_*(C)$,\index{hcc@$\HC^K_*(C)$|dotfillboldidx} $\HN^K_*(C)$\index{hd@$\HN^K_*(C)$|dotfillboldidx}, $\HP^K_*(C)$\index{he@$\HP^K_*(C)$|dotfillboldidx} and $\HH^K_*(C)$\index{hf$@$\HH^K_*(C)$|dotfillboldidx} of $C$ are the respective homology groups of $(C\ot_{\hs K}\ov{C}^{\ot_{\hs K}^*}\ot_{\hs K},b_*,B_*)$.

\section{Hochschild homology of cleft extensions}\label{Hochschild homology of cleft extensions}
Let $H$ be a weak Hopf algebra, $A$ an algebra, $\rho\colon H\to A$ a weak left action and $f\colon H\ot_kH\to A$ a li\-near map. Let $\chi_{\rho}$, $\gamma$, $\nu$, $\jmath_{\nu}$ and $\mathcal{F}_f$ be as at the beginning of Subsection~\ref{subsection: Crossed products by weak Hopf algebras}. Assume that the hypotheses~of~The\-orem~\ref{weak crossed prod} are fulfilled. Let $E$ be the crossed product associated with $\rho$ and $f$ and let $K$ be a subalgebra of $A$, which is stable under $\rho$. For instance we can take~$K$ as the minimal subalgebra of $A$ that is stable under $\rho$ (see Ex\-am\-ple~\ref{ejemlo subalgebra estable}). By Theorem~\ref{weak crossed prod}, the tuple $(A,\chi_{\rho},\mathcal{F}_f,\nu)$ is a crossed product system with preunit, $\mathcal{F}_f$ is a cocycle that satisfies the twisted module condi\-tion and $E$ is an associative algebra with unit $1_E\coloneqq 1_A\times 1$. Moreover, by Remark~\ref{incluido} we know that $1_E\in K\ot_k H$. So, the hypotheses of \cite{GGV2}*{Section~3} are satisfied. For the sake of simplicity in the sequel we will write $\ot$ instead of~$\ot_K$. Let $M$ be an $E$-bi\-mod\-ule. By definition the Hochschild homology $\Ho^{\hs K}_*(E,M)\index{hg@$\Ho^{\hs K}_*(E,M)$|dotfillboldidx}$, of the $K$-algebra $E$ with coefficients in $M$, is the homology of the normalized Hochschild chain complex $\cramped{\bigl(M\ot\ov{E}^{\ot^*}\ot,b_*\bigr)}$, where $b_*$ is the canonical Hochschild boundary map. In \cite{GGV2}*{Section~3} a chain complex $(\wh{X}_*(M),\wh{d}_*)$
was obtained, simpler than the canon\-ical one, that gives the Hochschild homology $\Ho^{\hs K}_*(E,M)\index{hg@$\Ho^{\hs K}_*(E,M)$|dotfillboldidx}$, of the $K$-algebra $E$ with coefficients in $M$. From now on we assume that the cocycle $f$ is convolution invertible. In this Section we prove that $(\wh{X}_*(M),\wh{d}_*)$ is isomorphic to a simpler complex $(\ov{X}_*(M),\ov{d}_*)$. If $K$ is separable (for instance when $K\coloneqq H\xcdot 1_A$), then $(\ov{X}_*(M),\ov{d}_*)$ gives the absolute Hochschild homology of $E$ with coefficients in $M$. Recall that $M$ is an $A$-bi\-module via the map $\jmath_{\nu}\colon A\to E$. Let $\ov{A}\coloneqq A/K$\index{ak@$\ov{A}$|dotfillboldidx}, $\ov{E}\coloneqq E/\jmath_{\nu}(K)$\index{ec@$\ov{E}$|dotfillboldidx} and $\wt{E}\coloneqq E/\jmath_{\nu}(A)$.\index{ed@$\wt{E}$|dotfillboldidx} We recall from \cite{GGV2}*{Section~3} that
$$
\wh{X}_n(M) = \bigoplus_{\substack{r,s\ge 0\\ r+s = n}} \wh{X}_{rs}(M),\qquad\text{where $\wh{X}_{rs}(M)\coloneqq  M\ot_{\hs A}\wt{E}^{\ot_{\hs A}^s}\ot\ov{A}^{\ot^r}\ot,$}\index{xxc@$\wh{X}_n(M)$|dotfillboldidx}\index{xxa@$\wh{X}_{rs}(M)$|dotfillboldidx}
$$
and that there exist maps $\wh{d}^l_{rs}\colon \wh{X}_{rs}(M)\to \wh{X}_{r-l,s+l-1}(M)$\index{dd@$\wh{d}^l_{rs}$|dotfillboldidx} such that
$$
\wh{d}_n(\bx)\coloneqq \begin{cases} \displaystyle{\sum_{l=1}^n \wh{d}^l_{0n}(\bx)} &\text{if $\bx\in \wh{X}_{0n}$,}\\ \displaystyle{\sum^{n-r}_{l=0} \wh{d}^l_{r,n-r}(\bx)} &\text{if $\bx\in \wh{X}_{r,n-r}$ with $r>0$.}\end{cases}\index{de@$\wh{d}_n$|dotfillboldidx}
$$

\begin{notations}\label{not 7.1} We will use the following notations:

\begin{enumerate}[itemsep=0.7ex, topsep=1.0ex, label=(\arabic*)]

\item For each $a\in A$ we let $\ov{a}$\index{aka@$\ov{a}$|dotfillboldidx} denote its class in $\ov{A}$.  For each $x\in E$, we let $\ov{x}\index{eg@$\ov{x}$|dotfillboldidx}$ and $\wt{x}\index{eh@$\wt{x}$|dotfillboldidx}$ denote its class in $\ov{E}$ and $\wt{E}$, respectively.

\item Given $a_1,\dots, a_r\in A$ and $1\le i\le j\le r$, we set $\ba_{ij}\coloneqq  a_i\ot\cdots\ot a_j\index{as@$\ba_{ij}$|dotfillboldidx}$, and we let $\ov{\ba}_{ij}\index{at@$\ov{\ba}_{ij}$|dotfillboldidx}$ denote the class of $\ba_{ij}$ in $\ov{A}^{\ot^{j-i+1}}$.

\item Given $h_1,\dots, h_s\in H$ we set $\mathfrak{h}_{1s}\coloneqq h_1\cdots h_s$\index{hii@$\mathfrak{h}_{1s}$|dotfillboldidx} and $\brh_{1s}\coloneqq h_1\ot_k\cdots \ot_{\hs k} h_s$.\index{hj@$\brh_{1s}$|dotfillboldidx}

\item Given $h_1,\dots,h_s\in H$ we set
\begin{align*}
&\brh_{1s}^{(1)}\ot_k \brh_{1s}^{(2)} \coloneqq \bigl(h_1^{(1)}\ot_k \cdots \ot_k h_s^{(1)}\bigr) \ot_k \bigl(h_1^{(2)}\ot_k \cdots \ot_k h_s^{(2)}\bigr)\index{hja@$\brh_{1s}^{(1)}\ot_k \brh_{1s}^{(2)}$|dotfillboldidx}\\
\shortintertext{and}
&\ov{\bh}_{1s}^{(1)}\ot_k \ov{\bh}_{1s}^{(2)}\coloneqq \bigl(\ov{h}_1^{(1)}\ot_{H^{\!L}} \cdots \ot_{H^{\!L}} \ov{h}_s^{(1)}\bigr) \ot_k \bigl(\ov{h}_1^{(2)}\ot_{H^{\!L}} \cdots \ot_{H^{\!L}} \ov{h}_s^{(2)}\bigr).\index{hjaa@$\ov{\bh}_{1s}^{(1)}\ot_k \ov{\bh}_{1s}^{(2)}$|dotfillboldidx}
\end{align*}

\item Given $h\in H$ and $a_1,\dots,a_r\in A$ we set $h\xcdot \ov{\ba}_{1r}\coloneqq \ov{h^{(1)}\xcdot a_1}\ot\cdots\ot \ov{h^{(r)}\xcdot a_r}$.\index{hlla@$h\xcdot \ov{\ba}_{1r}$|dotfillboldidx}

\item We let $\ov{\gamma}\colon H\to\ov{E}\index{zc@$\gamma$!zc1@$\ov{\gamma}$|dotfillboldidx}$ and $\wt{\gamma}\colon H\to \wt{E}\index{zc@$\gamma$!zc2@$\wt{\gamma}$|dotfillboldidx}$ denote the maps induced by $\gamma$.

\item Given $h_1,\dots, h_s\in H$ we set $\wt{\gamma}_{\hs A}(\brh_{1s})\coloneqq \wt{\gamma}(h_1)\ot_{\hs A}\cdots \ot_{\hs A}\wt{\gamma}(h_s)$.\index{zc@$\gamma$!zc64@$\wt{\gamma}_{\hs A}(\brh_{1s})$|dotfillboldidx}

\item Given $h_1,\dots, h_s\in H$ we set $\gamma_{_{\!\times}}(\brh_{1s}) \coloneqq \gamma(h_1)\cdots \gamma(h_s)\index{zc@$\gamma$!zc65@$\gamma_{{}_{\times}}(\brh_{1s})$|dotfillboldidx}$.

\item Given $h_1,\dots, h_s\in H$ we set $\gamma_{_{\!\times}}^{-1}(\brh_{1s})\coloneqq \gamma^{-1}(h_s)\cdots \gamma^{-1}(h_1), \index{zc@$\gamma$!zc7@$\gamma_{{}_{\times}}^{-1}(\brh_{1s})$|dotfillboldidx}$ where $\gamma^{-1}$ is as in~\eqref{gamma{-1}}.

\end{enumerate}

\end{notations}

\subsection{Technical results}

\begin{lemma}\label{propiedad 4} Let $a,a'\in A$ and $h\in H$. The following equalities hold:

\begin{enumerate}[itemsep=0.7ex, topsep=1.0ex, label=\emph{(\arabic*)}]



\item $\delta_E\bigl(\jmath_{\nu}(a)\gamma(l)\jmath_{\nu}(a')\gamma(h)\bigr) = \jmath_{\nu}(a)\gamma(l) \jmath_{\nu}(a') \gamma(h^{(1)})\ot_k h^{(2)}$, for all $l\in H^{\!L}$.

\item $\delta_E\bigl(\jmath_{\nu}(a')\gamma(h)\jmath_{\nu}(a)\gamma(l)\bigr) = \jmath_{\nu}(a')\gamma(h^{(1)})\jmath_{\nu}(a)\gamma(l) \ot_k h^{(2)}$, for all $l\in H^{\!L}$.

\item $\delta_E\bigl(\jmath_{\nu}(A)\gamma(H^{\!L})\bigr)\subseteq E\ot_k H^{\!L}$.

\item $\jmath_{\nu}(a)\gamma^{-1}(h) = \gamma^{-1}(h^{(1)})\jmath_{\nu}(h^{(2)}\xcdot a)$.

\end{enumerate}

\end{lemma}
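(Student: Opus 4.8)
The plan is to treat the four items separately, reducing the first three to the coaction formula of Proposition~\ref{calculo de coaccion} and reserving the genuine work for item~(4).

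For item~(1) I would first collapse the middle factor: since $l\in H^{\!L}$, Proposition~\ref{auxiliar4}(1) gives $\gamma(l)=\jmath_{\nu}(l\xcdot 1_A)$, and because $\jmath_{\nu}$ is multiplicative (Theorem~\ref{weak crossed prod}(9)) we obtain $\jmath_{\nu}(a)\gamma(l)\jmath_{\nu}(a')=\jmath_{\nu}\bigl(a(l\xcdot 1_A)a'\bigr)$. Thus the argument of $\delta_E$ is simply $\jmath_{\nu}(b)\gamma(h)$ with $b\coloneqq a(l\xcdot 1_A)a'$, and Proposition~\ref{calculo de coaccion} yields $\jmath_{\nu}(b)\gamma(h^{(1)})\ot_k h^{(2)}$; re-expanding $\jmath_{\nu}(b)\gamma(h^{(1)})$ back into $\jmath_{\nu}(a)\gamma(l)\jmath_{\nu}(a')\gamma(h^{(1)})$ gives the claim. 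For item~(2) the same collapse turns the argument into $\jmath_{\nu}(a')\gamma(h)\jmath_{\nu}(c)$ with $c\coloneqq a(l\xcdot 1_A)$; now I would use Proposition~\ref{gama iota y gama gama} to move $\jmath_{\nu}(c)$ across $\gamma(h)$, obtaining $\jmath_{\nu}\bigl(a'(h^{(1)}\xcdot c)\bigr)\gamma(h^{(2)})$, apply Proposition~\ref{calculo de coaccion}, and check by the same manipulation of the right-hand side (using coassociativity) that both sides coincide. Item~(3) is immediate: for $a\in A$ and $l\in H^{\!L}$, Proposition~\ref{calculo de coaccion} gives $\delta_E(\jmath_{\nu}(a)\gamma(l))=\jmath_{\nu}(a)\gamma(l^{(1)})\ot_k l^{(2)}$, and by Proposition~\ref{le h en HR0} we have $l^{(2)}\in H^{\!L}$, so the element lies in $E\ot_k H^{\!L}$; linearity of $\delta_E$ finishes it.

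Item~(4) is the real content. My approach is to prove the identity after convolving on the left with $\gamma$ and then cancel $\gamma$. Writing $P(h)\coloneqq \jmath_{\nu}(a)\gamma^{-1}(h)$ and $Q(h)\coloneqq \gamma^{-1}(h^{(1)})\jmath_{\nu}(h^{(2)}\xcdot a)$ for the two sides, and denoting by $*$ the convolution of maps $H\to E$, I would compute $\gamma*P$ and $\gamma*Q$. For $\gamma*P$, moving $\jmath_{\nu}(a)$ across $\gamma(h^{(1)})$ by Proposition~\ref{gama iota y gama gama} and then convolving the two remaining slots via $\gamma*\gamma^{-1}=\gamma\xcirc\Pi^L$ (Proposition~\ref{inv implica cleft}), followed by $\gamma(\Pi^L(h^{(2)}))=\jmath_{\nu}(\Pi^L(h^{(2)})\xcdot 1_A)=\jmath_{\nu}(h^{(2)}\xcdot 1_A)$ (Proposition~\ref{auxiliar4}(1) and Proposition~\ref{modulo algebra debil}(3)) and the weak-measure identity $(h^{(1)}\xcdot a)(h^{(2)}\xcdot 1_A)=h\xcdot a$ (Definition~\ref{def modulo algebra debil}(2)), I expect $(\gamma*P)(h)=\jmath_{\nu}(h\xcdot a)$. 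For $\gamma*Q$, the factor $\gamma*\gamma^{-1}=\gamma\xcirc\Pi^L$ arises from the two leftmost slots $\gamma(h^{(1)})\gamma^{-1}(h^{(2)})$, and the same ingredients give $(\gamma*Q)(h)=\jmath_{\nu}(h\xcdot a)$. Hence $\gamma*P=\gamma*Q$.

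The hard part will be the cancellation of $\gamma$, since in the weak setting $\gamma*\gamma^{-1}=\gamma\xcirc\Pi^L$ is not the convolution unit. Convolving $\gamma*P=\gamma*Q$ on the left with $\gamma^{-1}$ and using $\gamma^{-1}*\gamma=\gamma\xcirc\Pi^R$ (Proposition~\ref{inv implica cleft}) reduces everything to the normalization identity $(\gamma\xcirc\Pi^R)*\gamma^{-1}=\gamma^{-1}$, that is $\gamma(\Pi^R(h^{(1)}))\gamma^{-1}(h^{(2)})=\gamma^{-1}(h)$; since $\Pi^R(h^{(1)})\in H^{\!R}$ commutes with $\jmath_{\nu}(a)$ (Proposition~\ref{auxiliar4}(2)), this single identity yields both $(\gamma\xcirc\Pi^R)*P=P$ and $(\gamma\xcirc\Pi^R)*Q=Q$, whence $P=\gamma^{-1}*\gamma*P=\gamma^{-1}*\gamma*Q=Q$ by associativity of convolution. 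This normalization is exactly the special case $a=1_A$ of~(4), so it cannot be extracted from the convolution argument itself; I would establish it directly from the defining formula~\eqref{gamma{-1}} for $\gamma^{-1}$ together with the cocycle and twisted-module identities for $f^{-1}$, being careful that the classical shortcut $h\xcdot(l\xcdot a)=(hl)\xcdot a$ is unavailable here (Definition~\ref{def modulo algebra debil} grants it only on $1_A$), which is precisely why the cocycle corrections carried by $\gamma^{-1}$ must be kept throughout.
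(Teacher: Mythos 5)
Your items (1)--(3) are correct. For (3) you give exactly the paper's argument (Propositions~\ref{le h en HR0} and~\ref{calculo de coaccion}). For (1) and (2) the paper keeps $\gamma(l)$ intact and shuttles it back and forth with Propositions~\ref{gama iota y gama gama}, \ref{auxiliar4'''} and~\ref{le h en HR}(2) before and after applying $\delta_E$; your collapse $\gamma(l)=\jmath_{\nu}(l\xcdot 1_A)$ via Proposition~\ref{auxiliar4}(1), which turns the whole left block into a single $\jmath_{\nu}(b)$ so that Proposition~\ref{calculo de coaccion} applies in one stroke, is a legitimate shortcut that buys you less bookkeeping at no cost. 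The real divergence is item (4): the paper does not prove it in this article at all, but simply cites \cite{GGV1}*{Proposition~5.22}, whereas you propose a self-contained convolution argument. Your computations $(\gamma*P)(h)=(\gamma*Q)(h)=\jmath_{\nu}(h\xcdot a)$ are correct, the cancellation scheme $P=(\gamma\xcirc\Pi^R)*P=\gamma^{-1}*(\gamma*P)$ is sound, and you have correctly isolated the one identity everything hinges on, namely $\gamma\bigl(\Pi^R(h^{(1)})\bigr)\gamma^{-1}(h^{(2)})=\gamma^{-1}(h)$. That is the only step you leave undone, and it is cheaper to close than you fear: arguing as in Remark~\ref{prop nec} one has $\Pi^R(h^{(1)})\ot_k h^{(2)}=1^{(1)}\ot_k h1^{(2)}$ with $1^{(1)}\in H^{\!R}$ and $1^{(2)}\in H^{\!L}$, so Lemma~\ref{prop esp''} and Proposition~\ref{auxiliar4'''} give
$$
\gamma\bigl(\Pi^R(h^{(1)})\bigr)\gamma^{-1}(h^{(2)})=\gamma(1^{(1)})\gamma^{-1}(h1^{(2)})=\gamma(1^{(1)})\gamma\bigl(S(1^{(2)})\bigr)\gamma^{-1}(h)=\gamma\bigl(\Pi^L(1)\bigr)\gamma^{-1}(h)=\gamma^{-1}(h),
$$
with no recourse to the cocycle identities for $f^{-1}$. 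With that supplied, your proof of (4) is complete and independent of the external reference the paper relies on.
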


\begin{proof} 1)\enspace We have
\begin{align*}
\delta_E\bigl(\jmath_{\nu}(a)\gamma(l)\jmath_{\nu}(a')\gamma(h)\bigr) & = \delta_E\bigl(\jmath_{\nu}(a(l^{(1)}\cdot a'))\gamma(l^{(2)})\gamma(h)\bigr)\\
& = \delta_E\bigl(\jmath_{\nu}(a(l^{(1)}\cdot a'))\gamma(l^{(2)}h)\bigr)\\
& = \jmath_{\nu}(a(l^{(1)}\cdot a'))\gamma(l^{(2)}h^{(1)})\ot_k l^{(3)}h^{(2)} \\
& = \jmath_{\nu}(a(l^{(1)}\cdot a'))\gamma(l^{(2)}h^{(1)})\ot_k h^{(2)} \\
& = \jmath_{\nu}(a(l^{(1)}\cdot a'))\gamma(l^{(2)})\gamma(h^{(1)})\ot_k h^{(2)} \\
& = \jmath_{\nu}(a)\gamma(l)\jmath_{\nu}(a')\gamma(h^{(1)})\ot_k h^{(2)},
\end{align*}
where the first and the last equality hold by the first identity in~\eqref{gama iota y gama gama} and Theorem~\ref{weak crossed prod}(d); the second~one, by \cite{BNS1}*{(2.6a)} and~\cite{GGV1}*{Proposition~2.22}; the third one because, by equality~\eqref{equacion1}, the definition of $\delta_E$, and the fact that $\jmath_{\nu}(A)\gamma(H)\subseteq E$, for all $a\in A$ and $h\in H$, we have
\begin{equation}\label{calculo de coaccion}
\delta_E(\jmath_{\nu}(a)\gamma(h))= \nabla_{\!\rho}\bigl(a\times h^{(1)}\bigr)\ot_k h^{(2)}= \nabla_{\!\rho}\bigl(\jmath_{\nu}(a)\gamma(h^{(1)})\bigr)\ot_k h^{(2)}= \jmath_{\nu}(a)\gamma(h^{(1)})\ot_k h^{(2)};
\end{equation}
the fourth one, by~\cite{BNS1}*{(2.6a)} and the fact that by~\cite{BNS1}*{(2.4), (2.7a) and (2.10)}
\begin{equation}\label{aux2}
l^{(1)}h^{(1)}\ot_k l^{(2)}h^{(2)}= l h^{(1)}\ot_k h^{(2)}\quad\text{for all $h\in H$ and $l\in H^{\!L}$;}
\end{equation}
and the fifth one, by~\cite{GGV1}*{Proposition~2.22}.

\noindent 2)\enspace Mimic the proof of item~(1).

\smallskip

\noindent 3)\enspace This follows from equality~\eqref{calculo de coaccion} and~\cite{BNS1}*{(2.6a)}.

\smallskip

\noindent 4)\enspace This is \cite{GGV1}*{Proposition~5.22}.
\end{proof}

\begin{remark} Since $\jmath_{\nu}(1_A)=1_E$, from equality~\eqref{calculo de coaccion} it follows that $\gamma$ is $H$-colinear.
\end{remark}

\begin{lemma}\label{prop esp''} For all $h\in H$ and $l\in H^{\!L}$, we have $\gamma^{-1}(hl) = \gamma \bigl(S(l)\bigr) \gamma^{-1}(h)$ and $\gamma^{-1}(lh) = \gamma^{-1}(h)\gamma\bigl(S(l)\bigr)$.
\end{lemma}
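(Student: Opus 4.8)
The plan is to prove both identities by the same ``expand and collapse'' strategy, treating the first in full and obtaining the second as its mirror image. Before starting I would record two preliminary facts about an element $l\in H^{\!L}$. First, since $\ov{\Pi}^R$ is idempotent with image $H^{\!L}$, it fixes $l$, so Proposition~\ref{S y Pi} gives $S(l)=S(\ov{\Pi}^R(l))=\Pi^R(l)\in H^{\!R}$; thus $\gamma(S(l))$ is the image under $\gamma$ of an element of $H^{\!R}$, and as such it commutes with $\jmath_{\nu}(A)$ by Proposition~\ref{auxiliar4}(2) and merges with a neighbouring $\gamma$ by Proposition~\ref{auxiliar4'''}. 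Second, applying $\Delta$ in the second tensor slot of the first equality of Proposition~\ref{le h en HR}(2) yields the three-fold identity $\Delta^2(hl)=h^{(1)}l\ot_k h^{(2)}\ot_k h^{(3)}$, and doing the same to the second equality yields $\Delta^2(lh)=lh^{(1)}\ot_k h^{(2)}\ot_k h^{(3)}$. These two identities are what will absorb the coproduct of $l$.

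For the first identity I would start from the right-hand side. Expanding $\gamma^{-1}(h)$ via its definition~\eqref{gamma{-1}} and using $S(l)\in H^{\!R}$, Proposition~\ref{auxiliar4}(2) lets me slide $\gamma(S(l))$ past the factor $\jmath_{\nu}\bigl(f^{-1}(S(h^{(2)})\ot_k h^{(3)})\bigr)$, after which Proposition~\ref{auxiliar4'''} together with the anti-multiplicativity of $S$ collapses $\gamma(S(l))\gamma(S(h^{(1)}))=\gamma(S(l)S(h^{(1)}))=\gamma(S(h^{(1)}l))$. Hence the right-hand side equals $\jmath_{\nu}\bigl(f^{-1}(S(h^{(2)})\ot_k h^{(3)})\bigr)\gamma(S(h^{(1)}l))$. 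For the left-hand side I would simply substitute the three-fold identity $\Delta^2(hl)=h^{(1)}l\ot_k h^{(2)}\ot_k h^{(3)}$ into the definition~\eqref{gamma{-1}} of $\gamma^{-1}(hl)$, reading off $(hl)^{(1)}=h^{(1)}l$, $(hl)^{(2)}=h^{(2)}$ and $(hl)^{(3)}=h^{(3)}$; this produces the very same expression, giving $\gamma^{-1}(hl)=\gamma(S(l))\gamma^{-1}(h)$.

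The second identity is handled symmetrically: after expanding $\gamma^{-1}(h)\gamma(S(l))$, the first equality of Proposition~\ref{auxiliar4'''} (applicable because $S(l)\in H^{\!R}$) and the anti-multiplicativity of $S$ collapse $\gamma(S(h^{(1)}))\gamma(S(l))=\gamma(S(lh^{(1)}))$, while the left-hand side is computed from the companion identity $\Delta^2(lh)=lh^{(1)}\ot_k h^{(2)}\ot_k h^{(3)}$; both sides then reduce to $\jmath_{\nu}\bigl(f^{-1}(S(h^{(2)})\ot_k h^{(3)})\bigr)\gamma(S(lh^{(1)}))$. I do not expect a genuine obstacle here, since the two $f^{-1}$-arguments are already forced to coincide once the coproduct of $l$ has been absorbed; the only points requiring care are the membership $S(l)\in H^{\!R}$, which is precisely what legitimizes commuting and merging $\gamma(S(l))$ with the surrounding factors, and the correct passage from the two-fold relations of Proposition~\ref{le h en HR}(2) to their three-fold counterparts, which is the step that actually eliminates the extra coproduct legs of $l$.
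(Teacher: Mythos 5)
Your proposal is correct and follows essentially the same route as the paper's proof: both arguments collapse the iterated coproduct of $hl$ (resp.\ $lh$) via Propositions~\ref{le h en HR0} and~\ref{le h en HR}(2), split $\gamma\bigl(S(h^{(1)}l)\bigr)$ using the antimultiplicativity of $S$ together with Proposition~\ref{auxiliar4'''} (legitimate because $S(l)\in H^{\!R}$), and commute $\gamma(S(l))$ past the $\jmath_{\nu}$-factor with Proposition~\ref{auxiliar4}(2). The only cosmetic difference is that you work from both sides toward a common middle expression, whereas the paper runs the same chain of equalities in one direction starting from $\gamma^{-1}(hl)$.
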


\begin{proof} We prove the first equality and leave the second one, which is similar, to the reader. We have
\begin{align*}
\gamma^{-1}(hl) & = \jmath_{\nu}\bigl(f^{-1}(S(h^{(2)}l^{(2)})\ot_k h^{(3)}l^{(3)}\bigr)\gamma\bigl(S(h^{(1)}l^{(1)})\bigr)\\
& = \jmath_{\nu}\bigl(f^{-1}(S(h^{(2)})\ot_k h^{(3)}\bigr)\gamma\bigl(S(h^{(1)}l)\bigr)\\
& = \jmath_{\nu}\bigl(f^{-1}(S(h^{(2)})\ot_k h^{(3)}\bigr)\gamma\bigl(S(l)\bigr)\gamma\bigl(S(h^{(1)}\bigr)\\
& =\gamma\bigl(S(l)\bigr) \jmath_{\nu}\bigl(f^{-1}(S(h^{(2)})\ot_k h^{(3)}\bigr)\gamma\bigl(S(h^{(1)}\bigr)\\
& =\gamma\bigl(S(l)\bigr)\gamma^{-1}(h),
\end{align*}
where the first and last equality hold by the definition of $\gamma^{-1}$; the second one, by~\cite{BNS1}*{(2.6a)} and identity~\eqref{aux1}; the third one, by~\cite{GGV1}*{Proposition~2.22} and the fact that $S$ is antimultiplicative; and the fourth one, by~\cite{GGV1}*{Proposition~4.6}.
\end{proof}

\begin{lemma}\label{prop esp'} Let $h_1,\dots,h_s\in H$ and $a\in A$. The following equality holds:
$$
\gamma_{_{\!\times}}^{-1}(\brh_{1s}^{(1)})\ot_{\hs A} \wt{\gamma}_{\hs A}(\brh_{1s}^{(2)})\xcdot \jmath_{\nu}(a) = \jmath_{\nu}(a)\gamma_{_{\!\times}}^{-1}(\brh_{1s}^{(1)})\ot_{\hs A} \wt{\gamma}_{\hs A}(\brh_{1s}^{(2)}).
$$
\end{lemma}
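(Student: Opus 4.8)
The plan is to move $\jmath_{\nu}(a)$ from the outer slot it occupies on each side of the claimed identity to a common intermediate position---the gap between the leftmost tensor factor $\gamma_{_{\!\times}}^{-1}(\brh_{1s}^{(1)})$ and the tower $\wt{\gamma}_{\hs A}(\brh_{1s}^{(2)})$---and then to observe that the two resulting elements are literally equal. Recall that $\gamma_{_{\!\times}}^{-1}(\brh_{1s}^{(1)}) = \gamma^{-1}(h_s^{(1)})\cdots\gamma^{-1}(h_1^{(1)})$, so the first tensor factor is a product of $s$ copies of $\gamma^{-1}$ while the remaining factors form the tower $\wt{\gamma}(h_1^{(2)})\ot_{\hs A}\cdots\ot_{\hs A}\wt{\gamma}(h_s^{(2)})$.

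For the right-hand side I would push $\jmath_{\nu}(a)$ rightwards through the product $\gamma^{-1}(h_s^{(1)})\cdots\gamma^{-1}(h_1^{(1)})$, applying Lemma~\ref{propiedad 4}(4) once for each of the $s$ factors. After passing every factor the scalar has become $\jmath_{\nu}\bigl(h_1^{(2)}\xcdot(\cdots(h_s^{(2)}\xcdot a)\cdots)\bigr)$ and now sits at the right end of the first slot; by Theorem~\ref{weak crossed prod}(10) this is a right $A$-action, so the balancing of $\ot_{\hs A}$ transports it across the first tensor symbol, where it reappears as the left $A$-action of the same element on $\wt{\gamma}(h_1^{(3)})$.

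For the left-hand side I would instead push $\jmath_{\nu}(a)$ leftwards through the tower, alternately using Proposition~\ref{gama iota y gama gama} in the form $\gamma(h)\jmath_{\nu}(a)=\jmath_{\nu}(h^{(1)}\xcdot a)\gamma(h^{(2)})$ to hop across each $\wt{\gamma}(h_i^{(2)})$ and the $\ot_{\hs A}$-balancing (together with Theorem~\ref{weak crossed prod}(10)) to hop across each tensor symbol. Travelling from $\wt{\gamma}(h_s^{(2)})$ to $\wt{\gamma}(h_1^{(2)})$, the scalar accumulates to the same nested action $h_1^{(2)}\xcdot(\cdots(h_s^{(2)}\xcdot a)\cdots)$, ending as a left $A$-action on $\wt{\gamma}(h_1^{(3)})$, while the first slot is left untouched as $\gamma^{-1}(h_s^{(1)})\cdots\gamma^{-1}(h_1^{(1)})$. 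Thus both sides reduce to
\[
\gamma^{-1}(h_s^{(1)})\cdots\gamma^{-1}(h_1^{(1)})\ot_{\hs A} \bigl(h_1^{(2)}\xcdot(\cdots(h_s^{(2)}\xcdot a)\cdots)\bigr)\xcdot\wt{\gamma}(h_1^{(3)})\ot_{\hs A}\wt{\gamma}(h_2^{(3)})\ot_{\hs A}\cdots\ot_{\hs A}\wt{\gamma}(h_s^{(3)}).
\]

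The step I expect to require the most care is the Sweedler bookkeeping, and this is precisely where the weak setting matters: since $A$ is only a \emph{weak} $H$-module algebra, the iterated actions $h_i^{(2)}\xcdot(\cdots)$ cannot be collapsed into an action of a product such as $h_i^{(2)}\cdots h_s^{(2)}$, so they must be kept nested throughout. The two passages (rightwards through the $\gamma^{-1}$'s and leftwards through the $\wt{\gamma}$'s) each split every $h_i$ into three components $h_i^{(1)}\ot_k h_i^{(2)}\ot_k h_i^{(3)}$, assigned respectively to $\gamma^{-1}$, to the action, and to $\wt{\gamma}$; a single appeal to coassociativity shows that both procedures produce exactly this triple splitting in the same order, so the two intermediate elements coincide and the identity follows.
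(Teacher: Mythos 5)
Your proof is correct and uses exactly the same ingredients as the paper's: Lemma~\ref{propiedad 4}(4) to commute $\jmath_{\nu}(a)$ past the $\gamma^{-1}$ factors, Proposition~\ref{gama iota y gama gama} to commute it past the $\wt{\gamma}$ factors, and the $A$-balancing of $\ot_{\hs A}$ to hop across tensor symbols. The paper merely packages the same computation as an induction on $s$ (one $\gamma^{-1}$-hop and one $\wt{\gamma}$-hop per step) rather than unrolling both passages to a common middle form, so this is essentially the same argument.
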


\begin{proof} We proceed by induction on $s$. By the first identity in~\eqref{gama iota y gama gama} and Lemma~\ref{propiedad 4}(4), for $s = 1$ we have
$$
\gamma^{-1}(h_1^{(1)})\ot_{\hs A} \wt{\gamma}(h_1^{(2)})\jmath_{\nu}(a)  = \gamma^{-1}(h_1^{(1)})\jmath_{\nu}(h_1^{(2)}\xcdot a)\ot_{\hs A} \wt{\gamma}(h_1^{(3)}) = \jmath_{\nu}(a)\gamma^{-1}(h_1^{(1)})\ot_{\hs A} \wt{\gamma}(h_1^{(2)}).
$$
Assume $s>1$ and that the result is valid for $s-1$. Let $T\coloneqq \gamma_{_{\!\times}}^{-1}(\brh_{1s}^{(1)})\ot_{\hs A} \wt{\gamma}_{\hs A} (\brh_{1s}^{(2)})\xcdot \jmath_{\nu}(a)$. By the first identity in~\eqref{gama iota y gama gama},
$$
T\! =\! \gamma_{_{\!\times}}^{-1}(\brh_{1s}^{(1)})\ot_{\hs A} \wt{\gamma}_{\hs A}(\brh_{1,s-1}^{(2)}) \ot_{\hs A} \jmath_{\nu}(h_s^{(2)}\xcdot a) \wt{\gamma}(h_s^{(3)})\! =\! \gamma_{_{\!\times}}^{-1}(\brh_{1s}^{(1)})\ot_{\hs A} \wt{\gamma}_{\hs A}(\brh_{1,s-1}^{(2)})\xcdot \jmath_{\nu}(h_s^{(2)}\xcdot a) \ot_{\hs A} \wt{\gamma}(h_s^{(3)}).
$$
Consequently, by the inductive hypothesis and Lemma~\ref{propiedad 4}(4),
$$
T\! =\! \gamma^{-1}(h_s^{(1)})\jmath_{\nu}(h_s^{(2)}\xcdot a) \gamma_{_{\!\times}}^{-1}(\brh_{1,s-1}^{(1)}) \ot_{\hs A} \wt{\gamma}_{\hs A} (\brh_{1,s-1}^{(2)}) \ot_{\hs A}\wt{\gamma}(h_s^{(3)})\! =\!\jmath_{\nu}(a)\gamma_{_{\!\times}}^{-1}(\brh_{1s}^{(1)})\ot_{\hs A} \wt{\gamma}_{\hs A} (\brh_{1s}^{(2)}),
$$
as desired.
\end{proof}

\begin{lemma}\label{auxiliar 5} Let $s\ge 1$. for all $h_1,\dots, h_s\in H$, we have $\gamma_{_{\!\times}}(\brh_{1s}^{(1)}) \gamma_{_{\!\times}}^{-1}(\brh_{1s}^{(2)})\ot_{\hs A} \wt{\gamma}_{\hs A}(\brh_{1s}^{(3)}) = 1_E \ot_{\hs A} \wt{\gamma}_{\hs A}(\brh_{1s})$.
\end{lemma}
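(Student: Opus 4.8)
The plan is to prove the identity by induction on $s$, peeling off the first tensor factor at each stage and invoking the convolution identity $(\gamma*\gamma^{-1})(h)=\gamma(\Pi^L(h))$ of Proposition~\ref{inv implica cleft}. For the base case $s=1$ I would compute directly in $E\ot_{\hs A}\wt{E}$. Writing out three Sweedler components of $h_1$, Proposition~\ref{inv implica cleft} collapses the first two, giving $\gamma(h_1^{(1)})\gamma^{-1}(h_1^{(2)})\ot_{\hs A}\wt{\gamma}(h_1^{(3)}) = \gamma(\Pi^L(h_1^{(1)}))\ot_{\hs A}\wt{\gamma}(h_1^{(2)})$. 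Since $\Pi^L(h_1^{(1)})\in H^{\!L}$, Proposition~\ref{auxiliar4}(1) rewrites $\gamma(\Pi^L(h_1^{(1)}))$ as $\jmath_{\nu}(\Pi^L(h_1^{(1)})\xcdot 1_A)$, which may be pushed across $\ot_{\hs A}$ onto the second factor; then Proposition~\ref{auxiliar4'''} recombines $\gamma(\Pi^L(h_1^{(1)}))\gamma(h_1^{(2)}) = \gamma(\Pi^L(h_1^{(1)})h_1^{(2)})$, and the relation $\Pi^L*\ide = \ide$ reconstitutes $h_1$, yielding $1_E\ot_{\hs A}\wt{\gamma}(h_1)$.

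For the inductive step I would split $\gamma_{_{\!\times}}(\brh_{1s}^{(1)})\gamma_{_{\!\times}}^{-1}(\brh_{1s}^{(2)}) = \gamma(h_1^{(1)})\,X\,\gamma^{-1}(h_1^{(2)})$ with $X\coloneqq \gamma_{_{\!\times}}(\brh_{2s}^{(1)})\gamma_{_{\!\times}}^{-1}(\brh_{2s}^{(2)})$, so that the left-hand side becomes $\gamma(h_1^{(1)})X\gamma^{-1}(h_1^{(2)})\ot_{\hs A}\wt{\gamma}(h_1^{(3)})\ot_{\hs A}\wt{\gamma}_{\hs A}(\brh_{2s}^{(3)})$. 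The inductive hypothesis applied to $h_2,\dots,h_s$ gives $X\ot_{\hs A}\wt{\gamma}_{\hs A}(\brh_{2s}^{(3)}) = 1_E\ot_{\hs A}\wt{\gamma}_{\hs A}(\brh_{2s})$, and substituting this reduces the left-hand side to $\gamma(h_1^{(1)})\gamma^{-1}(h_1^{(2)})\ot_{\hs A}\wt{\gamma}(h_1^{(3)})\ot_{\hs A}\wt{\gamma}_{\hs A}(\brh_{2s})$. The base-case computation then applies verbatim in the first two slots (it only manipulates the $E$-factor and the first $\wt{E}$-factor, leaving the trailing tensors inert), producing $1_E\ot_{\hs A}\wt{\gamma}_{\hs A}(\brh_{1s})$.

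The point requiring care, and the main obstacle, is the legitimacy of that substitution: one must check that the operation $z\ot_{\hs A} Y\mapsto \gamma(h_1^{(1)})z\gamma^{-1}(h_1^{(2)})\ot_{\hs A}\wt{\gamma}(h_1^{(3)})\ot_{\hs A} Y$ is well defined on $E\ot_{\hs A}\wt{E}^{\ot_{\hs A}^{s-1}}$, i.e. that it respects the balancing $z\jmath_{\nu}(a)\ot_{\hs A} Y = z\ot_{\hs A}\jmath_{\nu}(a)\xcdot Y$ between the $E$-factor and the first $\wt{E}$-factor (the relations among the factors of $Y$ are untouched). This is exactly where Lemma~\ref{propiedad 4}(4) and Proposition~\ref{gama iota y gama gama} enter: moving $\jmath_{\nu}(a)$ rightward past $\gamma^{-1}(h_1^{(2)})$ produces $\gamma^{-1}(h_1^{(2)})\jmath_{\nu}(h_1^{(3)}\xcdot a)$ by Lemma~\ref{propiedad 4}(4), which crosses $\ot_{\hs A}$ and is reabsorbed via $\jmath_{\nu}(h_1^{(3)}\xcdot a)\gamma(h_1^{(4)}) = \gamma(h_1^{(3)})\jmath_{\nu}(a)$ from Proposition~\ref{gama iota y gama gama}, after which $\jmath_{\nu}(a)$ passes on to the first factor of $Y$. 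Once this balancing is verified the insertion map is a genuine function, the inductive substitution is valid, and the induction closes.
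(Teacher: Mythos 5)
Your proof is correct, but your inductive step runs in the opposite direction from the paper's. The paper also inducts on $s$, and its base case uses the same mechanism as yours (Proposition~\ref{inv implica cleft} collapses $\gamma(h_1^{(1)})\gamma^{-1}(h_1^{(2)})$ into an element of $\jmath_{\nu}(A)$ via Proposition~\ref{auxiliar4}(1), which is then slid across $\ot_{\hs A}$ and reabsorbed), though it reaches $1_E\ot_{\hs A}\wt{\gamma}(h_1)$ by passing through $\gamma(S(1^{(1)}))\ot_{\hs A}\wt{\gamma}(1^{(2)}h_1)$ using Remark~\ref{prop nec}, Proposition~\ref{S y Pi} and Proposition~\ref{auxiliar4'''''}, whereas you use $\Pi^L*\ide=\ide$ and Proposition~\ref{auxiliar4'''}; both routes are valid. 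In the inductive step the paper peels off the \emph{innermost} adjacent pair $\gamma(h_s^{(1)})\gamma^{-1}(h_s^{(2)})$, collapses it to $\jmath_{\nu}(S(1^{(1)})\xcdot 1_A)\ot_k 1^{(2)}h_s$, shuttles this element of $\jmath_{\nu}(A)$ leftward through $\gamma_{_{\!\times}}(\brh_{1,s-1}^{(1)})$ by repeated use of Definition~\ref{def modulo algebra debil}(3) and Proposition~\ref{gama iota y gama gama}, applies the inductive hypothesis to $h_1,\dots,h_{s-1}$, and then shuttles the element back to the right through the tensor factors before finishing with Propositions~\ref{auxiliar4}(1) and~\ref{auxiliar4'''''}; no conjugation operator has to be shown well defined, at the cost of a double traversal of the word. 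You instead peel off the \emph{outermost} pair, apply the inductive hypothesis to the inner word $X\ot_{\hs A}\wt{\gamma}_{\hs A}(\brh_{2s}^{(3)})$, and reduce to the base case; the cost is exactly the well-definedness of the insertion operator $z\ot_{\hs A} Y\mapsto \gamma(h_1^{(1)})z\gamma^{-1}(h_1^{(2)})\ot_{\hs A}\wt{\gamma}(h_1^{(3)})\ot_{\hs A} Y$, which you correctly isolate as the crux and correctly verify with Lemma~\ref{propiedad 4}(4) and Proposition~\ref{gama iota y gama gama} --- the same commutation data the paper uses for its shuttling. Both arguments are sound; yours trades the back-and-forth bookkeeping for one balancing check.
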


\begin{proof} We proceed by induction on $s$. Using the first equality in~\eqref{pepe} and arguing as in~\cite{BNS1}*{equality~(2.8b)}, we obtain that
\begin{equation}\label{auxiliar 4}
\Pi^L(h^{(1)})\ot_k h^{(2)} = S(\ov{\Pi}^L(h^{(1)}))\ot_k h^{(2)} = S(1^{(1)})\ot_k 1^{(2)}h\qquad\text{for all $h\in H$.}
\end{equation}
Using this, \cite{GGV1}*{Lemma~2.20 and Propositions~4.5 and~5.19}, we obtain
$$
\gamma(h_1^{(1)})\gamma^{-1}(h_1^{(2)})\ot_{\hs A} \wt{\gamma}(h_1^{(3)})\! =\! \gamma(\Pi^L(h_1^{(1)})) \ot_{\hs A} \wt{\gamma}(h_1^{(2)})\! =\! \gamma(S(1^{(1)})) \ot_{\hs A} \wt{\gamma}(1^{(2)}h_1)\! =\! 1_E \ot_{\hs A} \wt{\gamma}(h_1).
$$
This proves the case $s=1$. Assume that the result is true for $s$ and set $T\coloneqq \gamma_{_{\!\times}}(\brh_{1,s+1}^{(1)}) \gamma_{_{\!\times}}^{-1}(\brh_{1,s+1}^{(2)}) \ot_{\hs A} \wt{\gamma}_{\hs A}(\brh_{1,s+1}^{(3)})$. By equality~\eqref{auxiliar 4}, \cite{GGV1}*{Propositions~4.5 and~5.19},
$$
\gamma(h_{s+1}^{(1)})\gamma^{-1}(h_{s+1}^{(2)}) \ot_k h_{s+1}^{(3)} = \gamma(\Pi^L(h_{s+1}^{(1)})) \ot_k h_{s+1}^{(2)} = \jmath_{\nu}(\Pi^L(h_{s+1}^{(1)})\xcdot 1_A) \ot_k h_{s+1}^{(2)} = \jmath_{\nu}(S(1^{(1)})\xcdot 1_A) \ot_k 1^{(2)}h_{s+1}.
$$
Hence
$$
T = \gamma_{_{\!\times}}(\brh_{1s}^{(1)}) \jmath_{\nu}(S(1^{(1)})\xcdot 1_A)\gamma_{_{\!\times}}^{-1}(\brh_{1s}^{(2)}) \ot_{\hs A} \wt{\gamma}_{\hs A}(\brh_{1s}^{(3)}) \ot_{\hs A} \wt{\gamma}(1^{(2)}h_{s+1}).
$$
Using now the first identity in~\eqref{gama iota y gama gama} and Definition~\ref{def modulo algebra debil}(3) again and again, we obtain
$$
T = \jmath_{\nu}(\mathfrak{h}_{1s}^{(1)} S(1^{(1)})\xcdot 1_A) \gamma_{_{\!\times}}(\brh_{1s}^{(2)}) \gamma_{_{\!\times}}^{-1}(\brh_{1s}^{(3)}) \ot_{\hs A} \wt{\gamma}_{\hs A}(\brh_{1s}^{(4)}) \ot_{\hs A} \wt{\gamma}(1^{(2)}h_{s+1}).
$$
Consequently, by the inductive hypothesis,
$$
T = \jmath_{\nu}(\mathfrak{h}_{1s}^{(1)} S(1^{(1)})\xcdot 1_A)\ot_{\hs A} \wt{\gamma}_{\hs A}(\brh_{1s}^{(2)}) \ot_{\hs A} \wt{\gamma}(1^{(2)}h_{s+1}) = 1_E \ot_{\hs A} \wt{\gamma}_{\hs A}(\brh_{1s})\ot_{\hs A}\jmath_{\nu}(S(1^{(1)})\xcdot 1_A) \wt{\gamma}(1^{(2)}h_{s+1}),
$$
where the last equality follows using Definition~\ref{def modulo algebra debil}(3) and the first identity in~\eqref{gama iota y gama gama} again and again. Hence, by \cite{GGV1}*{Lemma~2.20 and~Proposition~4.5}, we have $T = 1_E \ot_{\hs A} \wt{\gamma}_{\hs A}(\brh_{1,s+1})$, as desired.
\end{proof}

\begin{lemma}\label{auxiliar 6} Let $s\ge 1$. For all $z\in H^{\!R}$ and $h_1,\dots, h_s\in H$, we have
$$
\gamma_{_{\!\times}}^{-1}(\mathrm{h}_{1s}^{(1)})\gamma(z)\gamma_{_{\!\times}}(\mathrm{h}_{1s}^{(2)})\ot_k \ov{\bh}_{1s}^{(3)} = \gamma(1^{(1)})\ot_k \ov{\Pi}^R(z)\xcdot \ov{\bh}_{1s}\xcdot 1^{(2)}.
$$
\end{lemma}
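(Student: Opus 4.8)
The plan is to prove the identity by induction on $s$, following the pattern of Lemmas~\ref{auxiliar 5} and~\ref{prop esp'}: the base case $s=1$ contains all the content, and the inductive step peels off the outermost factor $\gamma^{-1}(h_s^{(1)})(-)\gamma(h_s^{(2)})$ and reduces to a second application of the base case.

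For $s=1$ (writing $h=h_1$) I would argue as follows. Since $z\in H^{\!R}$, Proposition~\ref{auxiliar4'''} collapses $\gamma(z)\gamma(h^{(2)})=\gamma(zh^{(2)})$, so the left hand side is $\gamma^{-1}(h^{(1)})\gamma(zh^{(2)})\ot_k\ov{h^{(3)}}$. Applying Proposition~\ref{le h en HR}(1) to $\Delta(h^{(1)})$ rewrites $(h^{(1)})^{(1)}\ot_k z(h^{(1)})^{(2)}$ as $\Delta(zh^{(1)})$, so the product becomes the convolution $(\gamma^{-1}*\gamma)(zh^{(1)})=\gamma(\Pi^R(zh^{(1)}))$ by Proposition~\ref{inv implica cleft}; hence the left hand side equals $\gamma(\Pi^R(zh^{(1)}))\ot_k\ov{h^{(2)}}$. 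Setting $w\coloneqq\ov{\Pi}^R(z)\in H^{\!L}$, Proposition~\ref{para buena def} gives $\Pi^R(zh^{(1)})=\Pi^R(wh^{(1)})$, and because $w\in H^{\!L}$ Proposition~\ref{le h en HR}(2) yields $\Delta(wh)=wh^{(1)}\ot_k h^{(2)}$. Then the identity $\Pi^R(g^{(1)})\ot_k g^{(2)}=1^{(1)}\ot_k g1^{(2)}$ (obtained exactly as Remark~\ref{prop nec}) applied to $g=wh$ gives $\gamma(\Pi^R(wh^{(1)}))\ot_k\ov{h^{(2)}}=\gamma(1^{(1)})\ot_k\ov{\ov{\Pi}^R(z)h1^{(2)}}$, which is the right hand side.

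For the inductive step I would write the left hand side as $\gamma^{-1}(h_s^{(1)})\bigl[\gamma_{_{\!\times}}^{-1}(\brh_{1,s-1}^{(1)})\gamma(z)\gamma_{_{\!\times}}(\brh_{1,s-1}^{(2)})\bigr]\gamma(h_s^{(2)})\ot_k\ov{\bh}_{1,s-1}^{(3)}\ot_k\ov{h_s^{(3)}}$ and apply the inductive hypothesis to the bracket together with $\ov{\bh}_{1,s-1}^{(3)}$, replacing it by $\gamma(1^{(1)})$ with tensor part $\ov{\Pi}^R(z)\xcdot\ov{\bh}_{1,s-1}\xcdot 1^{(2)}$. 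Since $\Delta(H^{\!R})\subseteq H^{\!R}\ot_k H$ and $1\in H^{\!R}$ (Propositions~\ref{le h en HR0} and~\ref{conmut1}), the leg $1^{(1)}$ lies in $H^{\!R}$, so the base case applies again with $z$ replaced by $1^{(1)}$, turning $\gamma^{-1}(h_s^{(1)})\gamma(1^{(1)})\gamma(h_s^{(2)})\ot_k\ov{h_s^{(3)}}$ into $\gamma(1^{(1')})\ot_k\ov{\ov{\Pi}^R(1^{(1)})h_s1^{(2')}}$.

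The step I expect to be the main obstacle is the final collapse of the two independent copies of $\Delta(1)$ that now appear. Here I would use that $1^{(2)}\in H^{\!L}$ (Proposition~\ref{le h en HR0}), together with the $H^{\!L}$-bimodule structure of $\ov{H}^{\ot_{\!H^{\!L}}^s}$ underlying~\eqref{acciones}, to transport $1^{(2)}$ across the middle $\ot_{\!H^{\!L}}$ onto the last slot, producing the factor $1^{(2)}\ov{\Pi}^R(1^{(1)})$. This simplifies to $1$: since $\Delta$ is multiplicative, $\Delta(1)$ is idempotent, so $1^{(1)}1^{(1')}\ot_k 1^{(2)}1^{(2')}=\Delta(1)$, and applying $\epsilon\ot_k\ide$ gives $1^{(2)}\ov{\Pi}^R(1^{(1)})=\epsilon(1^{(1)}1^{(1')})1^{(2)}1^{(2')}=\epsilon(1^{(1)})1^{(2)}=1$. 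After this the expression reduces to $\gamma(1^{(1)})\ot_k\ov{\Pi}^R(z)\xcdot\ov{\bh}_{1s}\xcdot 1^{(2)}$, as required. The delicate part throughout is the careful bookkeeping of the several unit coproducts and the justification that the $H^{\!L}$-transports are legitimate.
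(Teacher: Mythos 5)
Your proof is correct and follows essentially the same route as the paper's: induction on $s$, with the base case reduced to $\gamma\bigl(\Pi^R(zh_1^{(1)})\bigr)\ot_k\ov{h_1^{(2)}}$ via Propositions~\ref{auxiliar4'''}, \ref{le h en HR}(1) and~\ref{inv implica cleft}, and the inductive step peeling off $h_s$ and re-invoking the case $s=1$. The only differences are local and harmless: the paper finishes the base case by expanding $\Pi^R$ and applying~\eqref{propiedad de epsilon} directly rather than routing through Proposition~\ref{para buena def} and the dual of Remark~\ref{prop nec}, and in the induction it absorbs $\Delta(1)$ into $\Delta(h_s)$ \emph{before} applying the case $s=1$, which spares it your (correctly verified) cleanup identity $1^{(2)}\ov{\Pi}^R(1^{(1)})=1$.
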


\begin{proof} Set $T\coloneqq \gamma_{_{\!\times}}^{-1}(\mathrm{h}_{1s}^{(1)})\gamma(z)\gamma_{_{\!\times}}(\mathrm{h}_{1s}^{(2)})\ot_k \ov{\bh}_{1s}^{(3)}$. We proceed by induction on $s$. Let $s=1$. By~\cite{BNS1}*{equal\-ity~(2.7b)}, \cite{GGV1}*{Propositions~2.22 and~5.19}, we have
$$
T = \gamma^{-1}(h_1^{(1)})\gamma(zh_1^{(2)})\ot_k \ov{h_1^{(3)}} = \gamma^{-1}(z^{(1)}h_1^{(1)})\gamma(z^{(2)}h_1^{(2)})\ot_k \ov{h_1^{(3)}} = \gamma\bigl(\Pi^R(zh_1^{(1)})\bigr)\ot_k \ov{h_1^{(2)}}.
$$
Consequently, by the definition of $\Pi^R$, \cite{BNS1}*{(2.6a)} and identities~\eqref{propiedad de epsilon} and~\eqref{aux1},
$$
T = \gamma(1^{(1)})\ot_k \epsilon(zh_1^{(1)}1^{(2)}) \ov{h_1^{(2)}} = \gamma(1^{(1)})\ot_k \epsilon(z1^{(1')})\epsilon(1^{(2')} h_1^{(1)}1^{(2)}) \ov{1^{(3')}h_1^{(2)}1^{(3)}}.
$$
So, by the definition of $\ov{\Pi}^R$,
$$
T = \gamma(1^{(1)})\ot_k \epsilon(z1^{(1')})\ov{1^{(2')} h_11^{(2)}}= \gamma(1^{(1)})\ot_k \ov{\ov{\Pi}^R(z)h_11^{(2)}}.
$$
Assume now that $s>1$ and the result holds by $s-1$. By the inductive hypothesis, we have
$$
T = \gamma^{-1}(h_s^{(1)})\gamma(1^{(1)})\gamma(h_s^{(2)})\ot_k \bigl(\ov{\Pi}^R(z)\xcdot \ov{\bh}_{1,s-1}\xcdot 1^{(2)}\ot_{\! H^{\!L}} \ov{h_s^{(3)}}\bigr).
$$
Thus, by the case $s=1$, \cite{BNS1}*{(2.4)} and~\cite{GGV1}*{Proposition~2.22}, we have
\begin{align*}
T & = \gamma^{-1}(h_s^{(1)})\gamma(1^{(1)}h_s^{(2)})\ot_k \bigl(\ov{\Pi}^R(z)\xcdot  \ov{\bh}_{1,s-1}\ot_{\! H^{\!L}} \ov{1^{(2)}h_s^{(3)}}\bigr)\\
& = \gamma^{-1}(h_s^{(1)})\gamma(h_s^{(2)})\ot_k \bigl(\ov{\Pi}^R(z)\xcdot \ov{\bh}_{1,s-1}\ot_{\! H^{\!L}} \ov{h_s^{(3)}}\bigr)\\
& = \gamma(1^{(1)})\ot_k \ov{\Pi}^R(z)\xcdot \ov{\bh}_{1s}\xcdot 1^{(2)},
\end{align*}
as desired.
\end{proof}

\subsection{Main results} Let $r,s\ge 0 $. By~\cite{GGV1}*{Propositions~2.22 and~4.6}, we know that $M\ot \ov{A}^{\ot^r}\ot$ is a left $H^{\!L}$-module via
\begin{equation}\label{acciones}
l \xcdot [m\ot \ov{\ba}_{1r}] \coloneqq [m\cdot \gamma(S(l))\ot\ov{\ba}_{1r}],
\end{equation}
where $[m\ot\ov{\ba}_{1r}]$ denotes the class of $m\ot\ov{\ba}_{1r}$ in $M\ot\ov{A}^{\ot^r}\ot$ (see Notation~\ref{tensor circular} and remember that $\ot$ stands for $\ot_K$). Write
$$
\ov{X}_{rs}(M)\coloneqq \ov{H}^{\ot_{\! H^{\!L}}^s} \ot_{\! H^{\!L}} \bigl(M\ot\ov{A}^{\ot^r}\ot\bigr)\index{xxb@$\ov{X}_{rs}(M)$|dotfillboldidx}.
$$
Since $\ov{H}^{\ot_{\! H^{\!L}}^0}  = H^{\!L}$ and $\ov{A}^{\ot^0} = K$, we have
\begin{equation}\label{ec10}
\ov{X}_{r0}(M)\simeq M\ot\ov{A}^{\ot^r}\ot\qquad\text{and}\qquad \ov{X}_{0s}(M) \simeq \ov{H}^{\ot_{\! H^{\!L}}^s} \ot_{\! H^{\!L}} \bigl(M\ot \bigr).
\end{equation}
Let
$\Theta'_{rs}\colon M\otimes_k E^{\otimes_k^s} \otimes_k \ov{A}^{\otimes^r} \longrightarrow \ov{X}_{rs}(M)$ and $\Lambda'_{rs}\colon H^{\otimes_k^s}\otimes_k M\otimes_k\ov{A}^{\ot^r}\longrightarrow \wh{X}_{rs}(M)$ be the maps defined by
\begin{align*}
&\Theta'(\bx) \coloneqq (-1)^{rs}\,\ov{\bh}_{1s}^{(2)} \ot_{\! H^{\!L}} \bigl[m\xcdot \jmath_{\nu}(a_1)\gamma(h_1^{(1)})\cdots \jmath_{\nu}(a_s)\gamma(h_s^{(1)})\ot \ov{\ba}_{s+1,s+r}\bigr]
\shortintertext{and}
&\Lambda'(\byy) \coloneqq (-1)^{rs} \bigl[m\xcdot\gamma_{_{\!\times}}^{-1}(\mathrm{h}_{1s}^{(1)})\ot_{\hs A} \wt{\gamma}_{\hs A}(\brh_{1s}^{(2)})\ot \ov{\ba}_{1r}\bigr] ,
\end{align*}
where $\bx\coloneqq m\ot_k \jmath_{\nu}(a_1)\gamma(h_1)\ot_k\cdots\ot_k\jmath_{\nu}(a_s)\gamma(h_s)\ot_k\ov{\ba}_{s+1,s+r}$ and $\byy\coloneqq \brh_{1s}\ot_k m\ot_k \ov{\ba}_{1r}$.

\begin{proposition}\label{const de aplicaciones} For each $r,s\ge 0$ the maps $\Theta'_{rs}$ and $\Lambda'_{rs}$ induce morphisms
$$
\Theta_{rs}\colon \wh{X}_{rs}(M) \longrightarrow \ov{X}_{rs}(M)\index{zz@$\Theta_{rs}$|dotfillboldidx}\qquad\text{and}\qquad \Lambda_{rs}\colon \ov{X}_{rs}(M)\longrightarrow \wh{X}_{rs}(M),\index{zmm@$\Lambda_{rs}$|dotfillboldidx}
$$
which are inverse one of each other.
\end{proposition}

\begin{proof} First we show that $\Theta_{rs}$ is well defined. Let $\bx$ be as in the definition of $\Theta'_{rs}$. We must prove that

\begin{enumerate}[itemsep=0.7ex, topsep=1.0ex, label= (\arabic*)]

\item If there exists $i$ such that $h_i\in H^{\!L}$, then $\Theta'(\bx) = 0$,

\item $\Theta'$ is $A$-balanced in the first $s$ tensors of $M\otimes_k E^{\otimes_k^s} \otimes_k A^{\otimes^r}$,

\item $\Theta'$ is $K$-balanced in the $(s+1)$-tensor of $M\otimes_k E^{\otimes_k^s} \otimes_k \ov{A}^{\otimes^r}$,

\item $\Theta'(\jmath_{\nu}(\lambda)\xcdot \bx) = \Theta'(\bx\xcdot \jmath_{\nu}(\lambda))$ for all $\lambda\in K$.

\end{enumerate}
Condition~(1) follows from Lemma~\ref{propiedad 4}(3). Next we prove that Condition~(2) is satisfied at the first tensor. Let $a\in A$. By Lemma~\ref{propiedad 4}(1), we have
\begin{align*}
& \Theta'(m\cdot \jmath_{\nu}(a)\ot_k \jmath_{\nu}(a_1)\gamma(h_1)\ot_k\cdots\ot_k \jmath_{\nu}(a_s)\gamma(h_s)\ot_k \ov{\ba}_{s+1,s+r})\\
& = (-1)^{rs}\,\ov{\bh}_{1s}^{(2)} \ot_{\! H^{\!L}} \bigl[m\xcdot \jmath_{\nu}(a)\jmath_{\nu}(a_1) \gamma(h_1^{(1)}) \cdots\jmath_{\nu}(a_s) \gamma(h_s^{(1)}) \ot \ov{\ba}_{s+1,s+r}\bigr]\\
& = \Theta'(m\ot_k  \jmath_{\nu}(a)\jmath_{\nu}(a_1)\gamma(h_1)\ot_k\cdots\ot_k \jmath_{\nu}(a_s)\gamma(h_s)\ot_k \ov{\ba}_{s+1,s+r}),
\end{align*}
A similar argument using items~(1) and~(2) of Lemma~\ref{propiedad 4} proves Condition~(2) at the $i$-th tensor with $2\le i\le s$. We now prove Condition~(3). Let $\lambda\in K$. By Lemma~\ref{propiedad 4}(2), we have
\begin{align*}
& \Theta'(m\ot_k \jmath_{\nu}(a_1)\gamma(h_1)\ot_k\cdots\ot_k\jmath_{\nu}(a_s)\gamma(h_s)\jmath_{\nu}(\lambda)\ot_k \ov{\ba}_{s+1,s+r})\\
& = (-1)^{rs}\,\ov{\bh}_{1s}^{(2)} \ot_{\! H^{\!L}} \bigl[m\xcdot\jmath_{\nu}(a_1)\gamma(h_1^{(1)})\cdots \jmath_{\nu}(a_s) \gamma(h_s^{(1)})\jmath_{\nu}(\lambda) \ot \ov{\ba}_{s+1,s+r}\bigr]\\
& = (-1)^{rs}\,\ov{\bh}_{1s}^{(2)} \ot_{\!H^{\!L}} \bigl[m\xcdot\jmath_{\nu}(a_1) \gamma(h_1^{(1)})\cdots\jmath_{\nu}(a_s)\gamma(h_s^{(1)})\ot \ov{\lambda a_{s+1}} \ot \ov{\ba}_{s+2,s+r}\bigr]\\
& = \Theta'(m\ot_k \jmath_{\nu}(a_1)\gamma(h_1)\ot_k\cdots\ot_k\jmath_{\nu}(a_s)\gamma(h_s)\ot_k\ov{\lambda a_{s+1}}\ot_k \ov{\ba}_{s+2,s+r}).
\end{align*}
Finally, when $r\ge 1$ the fourth assertion is trivial, while, when $r=0$ it follows from Lemma~\ref{propiedad 4}(2).

\smallskip

We next show that $\Lambda_{rs}$ is well defined. Let $\byy$ be as in the definition of $\Lambda'_{rs}$. We must prove that

\begin{enumerate}[itemsep=0.7ex, topsep=1.0ex, label=(\arabic*)]

\item If some $h_i\in H^{\!L}$, then $\Lambda'(\byy) = 0$,

\item $\Lambda'$ is $H^{\!L}$-balanced in the first $(s-1)$-th tensors of $\cramped{H^{\otimes_k^s}\otimes_k M\otimes_k \ov{A}^{\ot^r}}$,

\item If $r>0$, then $\Lambda'$ is $K$-balanced in the $(s+1)$-th tensor of $\cramped{H^{\otimes_k^s}\otimes_k M\otimes_k \ov{A}^{\ot^r}}$,

\item $\Lambda'(\mathrm{h}_{1s}\ot_k \jmath_{\nu}(\lambda)\xcdot m\ot_k \ov{\ba}_{1r}) = \Lambda'_{rs}(\mathrm{h}_{1s}\ot_k m \ot_k \ov{\ba}_{1r}\xcdot \lambda)$ for all $\lambda \in K$,

\item If $s>0$, then $\Lambda'(\mathrm{h}_{1s} \ot_k m \xcdot \gamma(S(l))\ot_k \ov{a}_{1r}) = \Lambda'(\mathrm{h}_{1s}\xcdot l\ot_k m \ot_k \ov{\ba}_{1r})$ for all $l \in H^{\!L}$,

\end{enumerate}
Item~(1) holds since, by~\cite{BNS1}*{(2.6a)} and~\cite{GGV1}*{Proposition~4.5}, $\gamma^{-1}(l^{(1)})\ot_k \gamma(l^{(2)}) \in E\ot_k \jmath_{\nu}(A)$ for all $l\in H^{\!L}$.~In~or\-der to prove item~(2) we must check that $\Lambda'(\mathrm{h}_{1i}\xcdot l\ot_k h_{i+1,s} \ot_k m\ot_k \ov{\ba}_{1r}) = \Lambda'(\mathrm{h}_{1i}\ot_k l\xcdot h_{i+1,s}\ot_k m \ot_k \ov{\ba}_{1r})$, for all $i\!<\!s$ and $l\in H^{\!L}$. But this follows from Lemma~\ref{prop esp''} and identities~\eqref{aux1} and~\eqref{aux2}. Item~(3) holds since, by Lemma~\ref{prop esp'},
\begin{align*}
\Lambda'(\mathrm{h}_{1s} \ot_k m\xcdot\jmath_{\nu}(\lambda) \ot_k \ov{\ba}_{1r}) & = \bigl[m\xcdot\jmath_{\nu}(\lambda) \gamma_{_{\!\times}}^{-1}(\mathrm{h}_{1s}^{(1)}) \ot_{\hs A} \wt{\gamma}_{\hs A}(\brh_{1s}^{(2)})\ot \ov{\ba}_{1r}\bigr]\\
& = \bigl[m\xcdot\gamma_{_{\!\times}}^{-1}(\brh_{1s}^{(1)})\ot_{\hs A} \wt{\gamma}_{\hs A}(\brh_{1s}^{(2)})\xcdot \jmath_{\nu}(\lambda)\ot \ov{\ba}_{1r}\bigr]\\
& = \bigl[m\xcdot \gamma_{_{\!\times}}^{-1}(\brh_{1s}^{(1)})\ot_{\hs A} \wt{\gamma}_{\hs A}(\brh_{1s}^{(2)})\ot \lambda \xcdot \ov{\ba}_{1r}\bigr]\\
& = \Lambda'(\brh_{1s}\ot_k m\ot_k \lambda\xcdot \ov{\ba}_{1r}).
\end{align*}
When $r\ge 1$, Item~(4) is trivial, while, when $r=0$, it holds since, by Lemma~\ref{prop esp'},
\begin{align*}
\Lambda'(\brh_{1s}\ot_k \jmath_{\nu}(\lambda)\xcdot m) & = \bigl[\jmath_{\nu}(\lambda)\xcdot m\xcdot  \gamma_{_{\!\times}}^{-1}(\brh_{1s}^{(1)}) \ot_{\hs A} \wt{\gamma}_{\hs A}(\brh_{1s}^{(2)})\bigr]\\
& = \bigl[m\xcdot\gamma_{_{\!\times}}^{-1}(\brh_{1s}^{(1)})\ot_{\hs A} \wt{\gamma}_{\hs A}(\brh_{1s}^{(2)})\xcdot \jmath_{\nu}(\lambda)\bigr]\\
& = \bigl[m\xcdot \jmath_{\nu}(\lambda)\gamma_{_{\!\times}}^{-1}(\mathrm{h}_{1s}^{(1)})\ot_{\hs A}\wt{\gamma}_{\hs A}(\brh_{1s}^{(2)})\bigr]\\
& = \Lambda'_{rs}(\brh_{1s}\ot_k m\xcdot\jmath_{\nu}(\lambda)),
\end{align*}
Finally, for Item~(5) we have
\begin{align*}
\Lambda'_{rs}(\brh_{1s}\ot_k m \xcdot \gamma(S(l))\ot_k \ov{\ba}_{1r}) & = \bigl[m\xcdot \gamma(S(l)) \gamma_{_{\!\times}}^{-1}(\brh_{1s}^{(1)}) \ot_{\hs A} \wt{\gamma}_{\hs A}(\brh_{1s}^{(2)})\ot \ov{\ba}_{1r}\bigr]\\
& = \bigl[m\xcdot\gamma^{-1}(h_s^{(1)}l)\gamma_{_{\!\times}}^{-1}(\brh_{1,s-1}^{(1)})\ot_{\hs A} \wt{\gamma}_{\hs A}(\brh_{1s}^{(2)})\ot \ov{\ba}_{1r}\bigr]\\
& = \bigl[m\xcdot\gamma^{-1}(h_s^{(1)}l^{(1)})\gamma_{_{\!\times}}^{-1}(\brh_{1,s-1}^{(1)})\ot_{\hs A} \wt{\gamma}_{\hs A}(\brh_{1,s-1}^{(2)}) \ot_{\hs A} \wt{\gamma}(h_s^{(2)}l^{(2)}) \ot \ov{\ba}_{1r}\bigr]\\
& = \Lambda'_{rs}(\brh_{1s}\xcdot l\ot_k m\ot_k \ov{\ba}_{1r}),
\end{align*}
where the second equality holds  by Lemma~\ref{prop esp''}; and the third one, by identity~\eqref{aux1}.

\smallskip

We next prove that $\Theta_{rs}$ and $\Lambda_{rs}$ are inverse one of each other. To begin with, note that under the first identifications in~\cite{GGV2}*{(3.1)} and~\eqref{ec10},
\begin{equation}\label{pepe4}
\Theta_{*0}=\ide\qquad\text{and}\qquad \Lambda_{*0} = \ide,
\end{equation}
which proves the case $s=0$. Assume $s\ge 1$ and let $m\in M$, $h_1,\dots,h_s\in H$ and $a_1,\dots,a_r\in A$. Set
$$
\bx\coloneqq [m\ot_{\hs A} \wt{\gamma}_{\hs A}(\brh_{1s})\ot \ov{\ba}_{1r}]\in \wh{X}_{rs}(M)\quad\text{and}\quad \byy\coloneqq \ov{\bh}_{1s} \ot_{H^{\!L}} [m\ot \ov{\ba}_{1r}]\in \ov{X}_{rs}(M).
$$
By Lemma~\ref{auxiliar 5}
\begin{align*}
\Lambda\bigl(\Theta(\bx)\bigr) & = (-1)^{rs} \Lambda\bigl(\ov{\bh}_{1s}^{(2)} \ot_{H^{\!L}} [m\xcdot \gamma_{_{\!\times}}(\brh_{1s}^{(1)})\ot \ov
{\ba}_{1r}]\bigr)\\
& = \bigl[m\xcdot \gamma_{_{\!\times}}(\brh_{1s}^{(1)})\gamma_{_{\!\times}}^{-1}(\brh_{1s}^{(2)})\ot_{\hs A} \wt{\gamma}_{\hs A}(\brh_{1s}^{(3)})\ot \ov{\ba}_{1r}\bigr]\\
& = [m \ot_{\hs A} \wt{\gamma}_{\hs A}(\brh_{1s})\ot \ov{\ba}_{1r}],
\end{align*}
as desired. For the other composition, by Lemma~\ref{auxiliar 6} and \cite{GGV1}*{Lemma~2.20}, we have
\begin{align*}
\Theta\bigl(\Lambda(\byy)\bigr) & = (-1)^{rs} \Theta\bigl(\bigl[m\xcdot \gamma_{_{\!\times}}^{-1}(\brh_{1s}^{(1)})\ot_{\hs A} \wt{\gamma}_{\hs A} (\brh_{1s}^{(2)})\ot \ov{\ba}_{1r}\bigr]\bigr)\\
& = \ov{\bh}_{1s}^{(3)} \ot_{H^{\!L}} \bigl[ m\xcdot \gamma_{_{\!\times}}^{-1}(\mathrm{h}_{1s}^{(1)}) \gamma_{_{\!\times}}(\brh_{1s}^{(2)})\ot \ov{\ba}_{1r}\bigr]\\
& =  \ov{\bh}_{1s} \xcdot  1^{(2)} \ot_{H^{\!L}} [m\xcdot \gamma(1^{(1)})\ot \ov{\ba}_{1r}]\\
& = \ov{\bh}_{1s} \ot_{H^{\!L}} [m\xcdot \gamma(1^{(1)})\gamma(S(1^{(2)}))\ot \ov{\ba}_{1r}]\\
& = \ov{\bh}_{1s} \ot_{H^{\!L}} [m\ot\ov{\ba}_{1r}],
\end{align*}
which finishes the proof.
\end{proof}

For each $0\le l\le s$ and $r\ge 0$ such that $r+l\ge 1$, let $\ov{d}^l_{rs}\colon \ov{X}_{rs}(M)\longrightarrow \ov{X}_{r+l-1,s-l}(M)$\index{de1@$\ov{d}^l_{rs}$|dotfillboldidx} be the map defined by $\ov{d}^l_{rs}\coloneqq \Theta_{r+l-1,s-l}\xcirc \wh{d}^l_{rs} \xcirc \Lambda_{rs}$.

\begin{theorem}\label{ppalhom} The Hochschild homology of the $K$-algebra $E$ with coefficients in $M$ is the homology of the chain complex $(\ov{X}_*(M),\ov{d}_*)$, where
$$
\ov{X}_n(M)\coloneqq \bigoplus_{r+s = n} \ov{X}_{rs}(M)\qquad\text{and}\qquad \ov{d}_n(\bx)\coloneqq \begin{cases} \displaystyle{\sum_{l=1}^n \ov{d}^l_{0n}(\bx)} &\text{if $\bx\in \ov{X}_{0n}$,}\\ \displaystyle{\sum^{n-r}_{l=0} \ov{d}^l_{r,n-r}(\bx)} &\text{if $\bx\in \ov{X}_{r,n-r}$ with $r>0$.}\end{cases}\index{xxd@$\ov{X}_n(M)$|dotfillboldidx}\index{de2@$\ov{d}_n$|dotfillboldidx}
$$
\end{theorem}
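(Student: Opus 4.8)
The plan is to deduce Theorem~\ref{ppalhom} directly from the work already done, by showing that $(\ov{X}_*(M),\ov{d}_*)$ is isomorphic, as a chain complex, to the complex $(\wh{X}_*(M),\wh{d}_*)$ of \cite{GGV2}*{Section~3}, which computes $\Ho^{\hs K}_*(E,M)$.

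First I would assemble the bidegree-preserving maps of Proposition~\ref{const de aplicaciones} into graded morphisms. Since $\wh{X}_n(M)=\bigoplus_{r+s=n}\wh{X}_{rs}(M)$ and $\ov{X}_n(M)=\bigoplus_{r+s=n}\ov{X}_{rs}(M)$, the families $(\Theta_{rs})_{r+s=n}$ and $(\Lambda_{rs})_{r+s=n}$ define degree-preserving maps $\Theta\colon\wh{X}_*(M)\to\ov{X}_*(M)$ and $\Lambda\colon\ov{X}_*(M)\to\wh{X}_*(M)$. By Proposition~\ref{son inversos} the $\Theta_{rs}$ and $\Lambda_{rs}$ are mutually inverse, so $\Theta$ is an isomorphism of graded $k$-modules with inverse $\Lambda$.

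Next I would verify that $\Theta$ intertwines the differentials, that is, $\ov{d}=\Theta\xcirc\wh{d}\xcirc\Lambda$. By the very definition of $\ov{d}^l_{rs}$, each component of $\ov{d}$ is the conjugate of the corresponding component $\wh{d}^l_{rs}$ of $\wh{d}$ by $\Theta$ and $\Lambda_{rs}$. Because $\Theta$ and $\Lambda$ preserve the bigrading, conjugating the restriction of $\wh{d}_n$ to a summand $\wh{X}_{rs}(M)$ (where it equals the appropriate $\sum_l\wh{d}^l_{rs}$) amounts to conjugating each component $\wh{d}^l_{rs}$ separately and summing; this reproduces exactly the formula for $\ov{d}_n$ in the statement, including the dichotomy between the $r=0$ and $r>0$ cases, which is respected since $\Theta$ and $\Lambda$ carry $\wh{X}_{0n}(M)$ to $\ov{X}_{0n}(M)$ and $\wh{X}_{r,n-r}(M)$ to $\ov{X}_{r,n-r}(M)$. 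Hence $\ov{d}=\Theta\xcirc\wh{d}\xcirc\Lambda$, and since $\Lambda=\Theta^{-1}$ we obtain $\ov{d}\xcirc\Theta=\Theta\xcirc\wh{d}$; in particular $\ov{d}\xcirc\ov{d}=\Theta\xcirc\wh{d}\xcirc\wh{d}\xcirc\Lambda=0$, so $(\ov{X}_*(M),\ov{d}_*)$ is a genuine chain complex and $\Theta$ is an isomorphism of complexes.

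Finally I would conclude that an isomorphism of chain complexes induces an isomorphism on homology, whence $\Ho_*(\ov{X}_*(M),\ov{d}_*)\simeq\Ho_*(\wh{X}_*(M),\wh{d}_*)=\Ho^{\hs K}_*(E,M)$ by \cite{GGV2}*{Section~3}. The main obstacle is the bookkeeping in the middle step: one must check that the index shifts recorded in the definition of $\ov{d}^l_{rs}$ agree with those produced by conjugating $\wh{d}^l_{rs}$ through the bidegree-preserving isomorphisms, so that the formal identity $\ov{d}=\Theta\xcirc\wh{d}\xcirc\Lambda$ holds termwise. All the substantive content—that the $\Theta_{rs}$ and $\Lambda_{rs}$ are well defined and mutually inverse—has already been established in Propositions~\ref{const de aplicaciones} and~\ref{son inversos}, so this final statement is essentially a formal assembly.
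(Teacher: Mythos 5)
Your proposal is correct and follows essentially the same route as the paper: the paper's proof likewise assembles $\Theta_n\coloneqq\bigoplus_{r+s=n}\Theta_{rs}$ and $\Lambda_n\coloneqq\bigoplus_{r+s=n}\Lambda_{rs}$, invokes Proposition~\ref{son inversos} to see they are mutually inverse, and uses the fact that $\ov{d}^l_{rs}$ is by definition the conjugate $\Theta_{r+l-1,s-l}\xcirc\wh{d}^l_{rs}\xcirc\Lambda_{rs}$ to conclude that $\Theta_*$ and $\Lambda_*$ are inverse isomorphisms of complexes. Nothing essential is missing.
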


\begin{proof} By Proposition~\ref{const de aplicaciones} and the definition of $(\ov{X}_*(M),\ov{d}_*)$, the maps
$$
\Theta_*\colon (\wh{X}_*(M),\wh{d}_*)\longrightarrow (\ov{X}_*(M),\ov{d}_*)\quad\text{and}\quad \Lambda_*\colon (\ov{X}_*(M),\ov{d}_*)\longrightarrow (\wh{X}_*(M),\wh{d}_*),
$$
given by $\Theta_n\coloneqq \bigoplus_{r+s = n} \Theta_{rs}$ and $\Lambda_n\coloneqq \bigoplus_{r+s = n} \Lambda_{rs}$, are inverse one of each other.
\end{proof}

By \cite{GGV2}*{Remark~3.2} if $f$ takes its values in $K$, then $(\ov{X}_*(M),\ov{d}_*)$ is the total chain complex of the double~com\-plex $(\ov{X}_{**}(M),\ov{d}^0_{**},\ov{d}^1_{**})$; while if $K=A$, then $(\ov{X}_*(M),\ov{d}_*) = (\ov{X}_{0*}(M),\ov{d}^1_{0*})$.

\begin{lemma}\label{auxiliar 6'} Let $m\in M$, $a,a_1\dots,a_r\in A$, $h_1,\dots, h_s\in H$ and $z\in H^{\!R}$.

\begin{enumerate}[itemsep=0.7ex, topsep=1.0ex, label=\emph{(\arabic*)}]

\item For $\bx\coloneqq \bigl[m\xcdot \gamma_{_{\!\times}}^{-1}(\brh_{1s}^{(1)})\ot_{\hs A} \wt{\gamma}_{\hs A}(\brh_{1,s-1}^{(2)}) \ot_{\hs A} \hspace{-0.5pt}\stackon[-8pt]{$\gamma(h_s^{(2)}) \jmath_{\nu}(a)$}{\vstretch{1.5}{\hstretch{3.3} {\widetilde{\phantom{\;\;\;\;\;\;}}}}} \ot \ov{\ba}_{1r}\bigr]$, we have
$$
\Theta(\bx) = (-1)^{rs}\, \ov{\bh}_{1s} \ot_{H^{\!L}} [m\xcdot \jmath_{\nu}(a)\ot \ov{\ba}_{1r}].
$$

\item For $\bx\coloneqq \bigl[m\xcdot \gamma_{_{\!\times}}^{-1}(\brh_{1s}^{(1)})\ot_{\hs A} \wt{\gamma}_{\hs A}(\brh_{1,i-1}^{(2)}) \ot_{\hs A} \hspace{-0.5pt}\stackon[-8pt]{$\gamma(h_i^{(2)})\gamma(h_{i+1}^{(2)})$} {\vstretch{1.5}{\hstretch{3.2} {\widetilde{\phantom{\;\;\;\;\;\;\;\,}}}}} \ot_{\hs A} \wt{\gamma}_{\hs A}(\brh_{i+2,s}^{(2)}) \ot \ov{\ba}_{1r}\bigr]$, we have
$$
\Theta(\bx) = (-1)^{rs}\,\bigl(\ov{\bh}_{1,i-1}\ot_{H^{\!L}}\ov{h_ih_{i+1}}\ot_{H^{\!L}}\ov{\bh}_{i+2,s}\bigr)\ot_{H^{\!L}}[m\ot\ov{\ba}_{1r}]
$$
(of course, we are assuming that $s\ge 2$ and $1\le i<s$).

\item For $\bx\coloneqq \bigl[m\xcdot \gamma_{_{\!\times}}^{-1}(\brh_{1s}^{(1)})\gamma(z)\ot_{\hs A} \wt{\gamma}_{\hs A}(\brh_{1s}^{(2)}) \ot\ov{\ba}_{1r}\bigr]$, we have $\Theta(\bx) = (-1)^{rs}\,\ov{\Pi}^R(z)\xcdot \ov{\bh}_{1s}\ot_{H^{\!L}}[m\ot \ov{\ba}_{1r}]$.
\end{enumerate}
\end{lemma}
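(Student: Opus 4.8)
The plan is to reduce each of the three identities to an application of the description of $\Theta$ that comes from its definition together with Proposition~\ref{calculo de coaccion}: since $\delta_E(\jmath_{\nu}(a)\gamma(h)) = \jmath_{\nu}(a)\gamma(h^{(1)})\ot_k h^{(2)}$ and $\gamma$ is $H$-colinear, $\Theta$ sends a class $[m_0\ot_{\hs A}\wt{\gamma}_{\hs A}(\brh_{1s})\ot\ov{\ba}_{1r}]$ (all tensorands pure $\wt{\gamma}$) to $(-1)^{rs}\ov{\bh}_{1s}^{(2)}\ot_{H^{\!L}}[m_0\xcdot\gamma_{_{\!\times}}(\brh_{1s}^{(1)})\ot\ov{\ba}_{1r}]$; more generally it records the $H$-components of the iterated coaction on the outside and multiplies the remaining $E$-parts into the $M$-slot. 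The three cases differ only in how the $\wt{E}$-tensorands are decorated. I would treat items (3) and (2) by this direct computation followed by Lemma~\ref{auxiliar 6}, and obtain item (1) by recognizing $\bx$ as $(-1)^{rs}\Lambda$ of the asserted value and invoking $\Theta\xcirc\Lambda=\ide$ (Proposition~\ref{son inversos}).

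For item (3) all $\wt{E}$-factors are pure $\wt{\gamma}(h_j^{(2)})$, so the formula above applies with $m_0 = m\xcdot\gamma_{_{\!\times}}^{-1}(\brh_{1s}^{(1)})\gamma(z)$, giving $\Theta(\bx) = (-1)^{rs}\ov{\bh}_{1s}^{(3)}\ot_{H^{\!L}}[m\xcdot\gamma_{_{\!\times}}^{-1}(\brh_{1s}^{(1)})\gamma(z)\gamma_{_{\!\times}}(\brh_{1s}^{(2)})\ot\ov{\ba}_{1r}]$. Lemma~\ref{auxiliar 6} then rewrites $\gamma_{_{\!\times}}^{-1}(\brh_{1s}^{(1)})\gamma(z)\gamma_{_{\!\times}}(\brh_{1s}^{(2)})\ot_k\ov{\bh}_{1s}^{(3)}$ as $\gamma(1^{(1)})\ot_k\ov{\Pi}^R(z)\xcdot\ov{\bh}_{1s}\xcdot 1^{(2)}$; pushing $1^{(2)}\in H^{\!L}$ across $\ot_{H^{\!L}}$ via~\eqref{acciones} and collapsing $\gamma(1^{(1)})\gamma(S(1^{(2)})) = 1_E$ by Proposition~\ref{auxiliar4'''''} yields $(-1)^{rs}\ov{\Pi}^R(z)\xcdot\ov{\bh}_{1s}\ot_{H^{\!L}}[m\ot\ov{\ba}_{1r}]$, exactly as in the final steps of the proof of Proposition~\ref{son inversos}. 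Item (2) is the same computation with $z=1$ (using $1\in H^{\!R}$ and $\ov{\Pi}^R(1)=1$): since $\delta_E$ is multiplicative and $\gamma$ colinear, the single factor $\wt{\gamma(h_i^{(2)})\gamma(h_{i+1}^{(2)})}$ contributes $\gamma(h_i^{(2)})\gamma(h_{i+1}^{(2)})$ to the interior product—so the interior product is again $\gamma_{_{\!\times}}(\brh_{1s}^{(2)})$—and the single outer tensorand $\ov{h_i^{(3)}h_{i+1}^{(3)}}$. Applying to the identity of Lemma~\ref{auxiliar 6} (with $z=1$) the $H^{\!L}$-bilinear multiplication map on the $i$-th and $(i+1)$-th outer tensorands replaces the ungrouped outer factor by $\ov{\bh}_{1,i-1}\ot_{H^{\!L}}\ov{h_ih_{i+1}}\ot_{H^{\!L}}\ov{\bh}_{i+2,s}$, and the same collapse finishes item (2).

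For item (1) I would instead massage $\bx$ into the image of $\Lambda$. Writing the last factor as $\wt{\gamma(h_s^{(2)})\jmath_{\nu}(a)} = \jmath_{\nu}(h_s^{(2)}\xcdot a)\xcdot\wt{\gamma}(h_s^{(3)})$ by Proposition~\ref{gama iota y gama gama}, I move $\jmath_{\nu}(h_s^{(2)}\xcdot a)$ leftward: first across $\ot_{\hs A}$, then through $\gamma_{_{\!\times}}^{-1}(\brh_{1,s-1}^{(1)})\ot_{\hs A}\wt{\gamma}_{\hs A}(\brh_{1,s-1}^{(2)})$ by Lemma~\ref{prop esp'}, landing it between $\gamma^{-1}(h_s^{(1)})$ and $\gamma_{_{\!\times}}^{-1}(\brh_{1,s-1}^{(1)})$. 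Lemma~\ref{propiedad 4}(4) turns $\gamma^{-1}(h_s^{(1)})\jmath_{\nu}(h_s^{(2)}\xcdot a)$ into $\jmath_{\nu}(a)\gamma^{-1}(h_s^{(1)})$, and reassembling $\gamma^{-1}(h_s^{(1)})\gamma_{_{\!\times}}^{-1}(\brh_{1,s-1}^{(1)}) = \gamma_{_{\!\times}}^{-1}(\brh_{1s}^{(1)})$ together with $\wt{\gamma}_{\hs A}(\brh_{1,s-1}^{(2)})\ot_{\hs A}\wt{\gamma}(h_s^{(2)}) = \wt{\gamma}_{\hs A}(\brh_{1s}^{(2)})$ exhibits $\bx = (-1)^{rs}\Lambda\bigl(\ov{\bh}_{1s}\ot_{H^{\!L}}[m\xcdot\jmath_{\nu}(a)\ot\ov{\ba}_{1r}]\bigr)$. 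Applying $\Theta$ and Proposition~\ref{son inversos} gives the claim.

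The main obstacle is the coproduct bookkeeping in item (2): one must verify that, although $\delta_E$ and the passage to standard form via Proposition~\ref{gama iota y gama gama} introduce the cocycle and several extra coproduct legs in the merged factor, all of this is absorbed into the single interior product $\gamma_{_{\!\times}}^{-1}(\brh_{1s}^{(1)})\gamma_{_{\!\times}}(\brh_{1s}^{(2)})$ and collapses under Lemma~\ref{auxiliar 6}, so that the outer factor reduces to the single tensorand $\ov{h_ih_{i+1}}$ with no residual cocycle term. Keeping track of which Sweedler leg feeds $\gamma_{_{\!\times}}^{-1}$, which feeds the interior $\gamma_{_{\!\times}}$, and which survives on the outside, and checking that the grouping map is compatible with the $H^{\!L}$-actions of~\eqref{acciones}, is the only delicate point; the remaining manipulations are the routine relations already used in Proposition~\ref{son inversos} and Lemmas~\ref{auxiliar 5} and~\ref{auxiliar 6}.
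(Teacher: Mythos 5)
Your proposal is correct, and for items (2) and (3) it is essentially the paper's own argument: rewrite the decorated $\wt{E}$-factor in standard form $\jmath_{\nu}(\cdot)\gamma(\cdot)$ via Proposition~\ref{gama iota y gama gama}, apply the defining formula for $\Theta$, and collapse the resulting interior product $\gamma_{_{\!\times}}^{-1}(\brh_{1s}^{(1)})\gamma(z)\gamma_{_{\!\times}}(\brh_{1s}^{(2)})$ with Lemma~\ref{auxiliar 6}, pushing the residual $1^{(2)}$ across $\ot_{H^{\!L}}$ by~\eqref{acciones}; your observation that the merging of the $i$-th and $(i+1)$-th outer tensorands is the delicate bookkeeping point is exactly what the paper handles implicitly. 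Where you genuinely diverge is item (1): the paper computes $\Theta(\bx)$ directly (again via Lemma~\ref{auxiliar 6} with $z=1$ followed by Proposition~\ref{auxiliar3}), whereas you instead exhibit $\bx$ as $(-1)^{rs}\Lambda$ of the asserted answer using Lemma~\ref{prop esp'} and Lemma~\ref{propiedad 4}(4), and conclude by $\Theta\xcirc\Lambda=\ide$ from Proposition~\ref{son inversos}. Your route is slightly cleaner and has the side benefit of covering $s=0$ automatically; the paper's direct route reuses the same Lemma~\ref{auxiliar 6} machinery uniformly across all three items. Two small points: for items (1) and (3) with $s=0$ the Lemma~\ref{auxiliar 6} computation does not apply and one should fall back on Remark~\ref{pepe4} (and Proposition~\ref{S y Pi} for item (3)), as the paper does; and the final collapse $m\xcdot\gamma(1^{(1)})\gamma(S(1^{(2)}))=m$ is most directly Proposition~\ref{auxiliar3} with $a=b=1_A$ and $h=l=1$, though the paper itself also cites Proposition~\ref{auxiliar4'''''} for this step elsewhere, so your citation is defensible.
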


\begin{proof} 1)\enspace Under the first identifications in~\cite{GGV2}*{(3.1)} and~\eqref{ec10}, for $s=0$ the equality in item~(1) becomes $\Theta([m\xcdot \jmath_{\nu}(a)\ot \ov{\ba}_{1r}]) = [m\xcdot \jmath_{\nu}(a)\ot \ov{\ba}_{1r}]$, which follows immediately from equality~\eqref{pepe4}.
Assume now that $s\ge 1$. By~\cite{GGV1}*{Proposition~2.21}, we know that
\begin{equation}\label{auxiliar3}
\jmath_{\nu}(a)\gamma(h1^{(1)})\jmath_{\nu}(b)\gamma(S(1^{(2)})l) = \jmath_{\nu}(a)\gamma(h)\jmath_{\nu}(b)\gamma(l)\qquad\text{for all $a,b\in A$ and $l,h\in H$.}
\end{equation}
By this, the first identity in~\eqref{gama iota y gama gama}, Lemma~\ref{auxiliar 6}, \cite{BNS1}*{(2.4)} and the definition of the action in~\eqref{acciones},
\begin{align*}
\Theta(\bx) & = \Theta\Bigl(\Bigl[m\xcdot \gamma_{_{\!\times}}^{-1}(\brh_{1s}^{(1)})\ot_{\hs A} \wt{\gamma}_{\hs A}(\brh_{1,s-1}^{(2)})\ot_{\hs A} \hspace{-0.5pt}\stackon[-8pt]{$\jmath_{\nu}(h_s^{(2)}\xcdot a)\gamma(h_s^{(3)})$}{\vstretch{1.5}{\hstretch{3.2} {\widetilde{\phantom{\;\;\;\;\;\;\;\;\,}}}}}\! \ot \ov{\ba}_{1r}\Bigr]\Bigr)\\
& = (-1)^{rs}\,  \ov{\bh}_{1s}^{(3)} \ot_{H^{\!L}} \bigl[m \xcdot\gamma_{_{\!\times}}^{-1}(\brh_{1s}^{(1)})\gamma_{_{\!\times}}(\brh_{1s}^{(2)}) \jmath_{\nu}(a) \ot \ov{\ba}_{1r}\bigr] \\
& = (-1)^{rs}\,\ov{\bh}_{1s}\xcdot 1^{(2)} \ot_{H^{\!L}} \bigl[m\xcdot \gamma(1^{(1)})\jmath_{\nu}(a)\ot \ov{\ba}_{1r}\bigr]\\
& = (-1)^{rs}\,\ov{\bh}_{1s} \ot_{H^{\!L}} \bigl[m\xcdot \gamma(1^{(1)})\jmath_{\nu}(a) \gamma(S(1^{(2)}))\ot \ov{\ba}_{1r}\bigr] \\
& = (-1)^{rs}\, \ov{\bh}_{1s} \ot_{H^{\!L}} \bigl[m\xcdot \jmath_{\nu}(a)\ot \ov{\ba}_{1r}\bigr],
\end{align*}
as desired.

\smallskip

\noindent 2)\enspace By the second identity in~\eqref{gama iota y gama gama}, Lemma~\ref{auxiliar 6}, \cite{BNS1}*{(2.4)}, the definition of the action in~\eqref{acciones} and equality~\eqref{auxiliar3},
\begin{align*}
\Theta(\bx)\! & =\! \Theta\Bigl(\!\Bigl[m\cdot\! \gamma_{_{\!\times}}^{-1}(\brh_{1s}^{(1)})\!\ot_{\hs A} \wt{\gamma}_{\hs A}(\brh_{1,i-1}^{(2)})\! \ot_{\hs A}\! \stackon[-8pt]{$\jmath_{\nu}\bigl(f\bigl(h_i^{(2)}\!\ot_k\! h_{i+1}^{(2)}\bigr) \bigr)\gamma\bigl(h_i^{(3)}h_{i+1}^{(3)}\bigr)$} {\vstretch{1.5}{\hstretch{5.5}{\widetilde{\phantom {\,\;\;\;\;\;\;\;\;}}}}} \!\ot_{\hs A} \wt{\gamma}_{\hs A}(\brh_{i+2,s}^{(2)})\! \ot \ov{\ba}_{1r}\Bigr]\!\Bigr)\\
& = (-1)^{rs} \bigl(\ov{\bh}_{1,i-1}^{(3)}\ot_{H^{\!L}} \ov{h_i^{(3)}h_{i+1}^{(3)}} \ot_{H^{\!L}} \ov{\bh}_{i+2,s}^{(3)}\bigr)\ot_{H^{\!L}} \bigl[m\xcdot \gamma_{_{\!\times}}^{-1}(\brh_{1s}^{(1)})\gamma_{_{\!\times}}(\brh_{1s}^{(2)})\ot \ov{\ba}_{1r}\bigr]\\
& = (-1)^{rs}\,\bigl(\ov{\bh}_{1,i-1}\ot_{H^{\!L}} \ov{h_ih_{i+1}} \ot_{H^{\!L}}\ov{\bh}_{i+2,s}\xcdot  1^{(2)}\bigr) \ot_{H^{\!L}}  [m\xcdot \gamma(1^{(1)})\ot \ov{\ba}_{1r}]\\
& = (-1)^{rs}\,\bigl(\ov{\bh}_{1,i-1}\ot_{H^{\!L}} \ov{h_ih_{i+1}} \ot_{H^{\!L}}\ov{\bh}_{i+2,s}\bigr) \ot_{H^{\!L}} \bigl[m\xcdot \gamma(1^{(1)})\gamma(S(1^{(2)}))\ot \ov{\ba}_{1r}\bigr]\\
& = (-1)^{rs}\, \bigl(\ov{\bh}_{1,i-1}\ot_{H^{\!L}} \ov{h_ih_{i+1}} \ot_{H^{\!L}}\ov{\bh}_{i+2,s}\bigr)\ot_{H^{\!L}}[m\ot \ov{\ba}_{1r}],
\end{align*}
as desired.

\smallskip

\noindent 3)\enspace Under the first identifications in~\cite{GGV2}*{(3.1)} and~\eqref{ec10}, for $s=0$ item~(3) be\-comes
$$
\Theta([m\xcdot\gamma(z)\ot \ov{\ba}_{1r}]) = [m\xcdot\gamma(S(\ov{\Pi}^R(z)))\ot \ov{\ba}_{1r}],
$$
which follows immediately from identities~\eqref{pepe} and~\eqref{pepe4}. Assume now that $s\ge 1$. We have
\begin{align*}
\Theta(\bx) 
& = (-1)^{rs} \ov{\Pi}^R(z)\xcdot \ov{\bh}_{1s} \xcdot  1^{(2)}\ot_{H^{\!L}} \bigl[m\xcdot \gamma(1^{(1)}) \ot \ov{\ba}_{1r}\bigr] \\
& = (-1)^{rs} \ov{\Pi}^R(z)\xcdot \ov{\bh}_{1s}\ot_{H^{\!L}} \bigl[m\xcdot \gamma(1^{(1)})\gamma(S(1^{(2)})) \ot \ov{\ba}_{1r}\bigr] \\
& = (-1)^{rs} \ov{\Pi}^R(z)\xcdot \ov{\bh}_{1s}\ot_{H^{\!L}}\bigl[m\ot \ov{\ba}_{1r}\bigr],
\end{align*}
where the first equality holds by the definition of $\Theta$ and Lemma~\ref{auxiliar 6}; the second one, by~\cite{BNS1}*{(2.4)} and the definition of the action in~\eqref{acciones}; and the last one, by equality~\eqref{auxiliar3}.
\end{proof}

\begin{notation} Given a $k$-subalgebra $R$ of $A$ and $0\!\le\! u\!\le\! r$, we let $\ov{X}^u_{rs}(R,M)$\index{xxt@$\ov{X}^u_{rs}(R,M)$|dotfillboldidx} denote the~$k$-sub\-mo\-dule of $\ov{X}_{rs}(M)$ generated by all the elements $\ov{\bh}_{1s} \ot_{H^{\!L}}\hs [m\hs\ot\hs \ov{\ba}_{1r}]$ with $m\in M$, $a_1,\dots,a_r\in A$, $h_1,\dots,h_s\in H$, and at least $u$ of the $a_j$'s in $R$.
\end{notation}

\begin{theorem}\label{ppalhom1} Let $\byy\coloneqq \ov{\bh}_{1s} \ot_{H^{\!L}} [m\ot \ov{\ba}_{1r}]\in\ov{X}_{rs}(M)$, where $m\in M$, $a_1,\dots,a_r\in A$ and $h_1,\dots,h_s\in H$. The following assertions hold:

\begin{enumerate}[itemsep=0.7ex, topsep=1.0ex, label=\emph{(\arabic*)}]

\item For $r\ge 1$ and $s\ge 0$, we have
\begin{align*}
\ov{d}^0(\byy) & = \ov{\bh}_{1s} \ot_{H^{\!L}} [m\xcdot \jmath_{\nu}(a_1)\ot \ov{\ba}_{2r}]\\
& + \sum_{i=1}^{r-1} (-1)^i\,\ov{\bh}_{1s} \ot_{H^{\!L}} [m\ot \ov{\ba}_{1,i-1}\ot \ov{a_ia_{i+1}}\ot \ov{\ba}_{i+2,r}]\\
& + (-1)^r\, \ov{\bh}_{1s} \ot_{H^{\!L}} [\jmath_{\nu}(a_r)\xcdot m\ot \ov{\ba}_{1,r-1}].
\end{align*}

\item For $r\ge 0$ and $s = 1$, we have
\begin{align}
\qquad\quad \ov{d}^1(\byy) &= (-1)^r \bigl[m\xcdot \gamma\bigl(\Pi^R(h_1)\bigr)\ot \ov{\ba}_{1r}\bigr] - (-1)^r \bigl[\gamma(h_1^{(3)})\xcdot m\xcdot\gamma^{-1}(h_1^{(1)})\ot h_1^{(2)}\xcdot \ov{\ba}_{1r}\bigr],\label{formula alternativa}
\shortintertext{while, for $r\ge 0$ and $s>1$, we have}
\ov{d}^1(\byy) & = (-1)^r\,\ov{\Pi}^R(h_1)\xcdot \ov{\bh}_{2s}\ot_{H^{\!L}} [m\ot \ov{\ba}_{1r}]\notag\\
& + \sum_{i=1}^{s-1} (-1)^{r+i}\,\bigl(\ov{\bh}_{1,i-1}\ot_{H^{\!L}} \ov{h_ih_{i+1}} \ot_{H^{\!L}} \ov{\bh}_{i+2,s}\bigr) \ot_{H^{\!L}} [m \ot \ov{\ba}_{1r}]\notag\\
& + (-1)^{r+s}\,\ov{\bh}_{1,s-1} \ot_{H^{\!L}} [\gamma(h_s^{(3)})\xcdot m\xcdot\gamma^{-1}(h_s^{(1)})\ot h_s^{(2)}\xcdot \ov{\ba}_{1r}].\notag
\end{align}

\item For $r\ge 0$ and $s\ge 2$, we have
\begin{align*}
\quad\qquad \ov{d}^2(\byy) & = - \ov{\bh}_{1,s-2}\ot_{H^{\!L}} \bigl[\gamma(h_{s-1}^{(3)}h_s^{(3)})\xcdot m\xcdot \gamma^{-1}(h_s^{(1)}) \gamma^{-1}(h_{s-1}^{(1)})\ot \mathfrak{T}(h^{(2)}_{s-1},h^{(2)}_s, \ov{\ba}_{1r})\bigr] ,
\end{align*}
where
$$
\qquad\quad \mathfrak{T}(h_{s-1},h_s,\ov{\ba}_{1r})\coloneqq \sum_{i=0}^r (-1)^i  h_{s-1}^{(1)}\xcdot (h_s^{(1)}\xcdot \ov{\ba}_{1i})\ot f(h_{s-1}^{(2)}\ot_k h_s^{(2)}) \ot h_{s-1}^{(3)}h_s^{(3)}\xcdot \ov{\ba}_{i+1,r}.
$$

\item Let $R$ be a $k$-subalgebra of $A$. If $R$ is stable under $\rho$ and $f$ takes its values in $R$, then
$$
\ov{d}^l\bigl(\ov{X}_{rs}(M)\bigr)\subseteq \ov{X}^{l-1}_{r+l-1,s-l}(R,M)
$$
for each $r\ge 0$ and $1< l\le s$.
\end{enumerate}
\end{theorem}

\begin{proof} 1)\enspace By the definition of $\Lambda$ and \cite{GGV2}*{Theorem~3.5(1)}, we have
\begin{align*}
\ov{d}^0(\byy) 
& = (-1)^{rs} \Theta \xcirc \wh{d}^0\bigl(\bigl[m\xcdot \gamma_{_{\!\times}}^{-1}(\brh_{1s}^{(1)})\ot_{\hs A} \wt{\gamma}_{\hs A}(\brh_{1s}^{(2)})\ot \ov{\ba}_{1r}\bigr]\bigr)\\
& = (-1)^{rs+s} \Theta \bigl(\bigl[m\xcdot \gamma_{_{\!\times}}^{-1}(\brh_{1s}^{(1)})\ot_{\hs A} \wt{\gamma}_{\hs A}(\brh_{1s}^{(2)})\xcdot \jmath_{\nu}(a_1)\ot \ov{\ba}_{2r}\bigr]\bigr)\\
& + \sum_{i=1}^{r-1} (-1)^{rs+s+i} \Theta \bigl(\bigl[m\xcdot \gamma_{_{\!\times}}^{-1}(\brh_{1s}^{(1)})\ot_{\hs A} \wt{\gamma}_{\hs A} (\brh_{1s}^{(2)})\ot \ov{\ba}_{1,i-1}\ot \ov{a_ia_{i+1}}\ot \ov{\ba}_{i+2,r}\bigr]\bigr)\\
& + (-1)^{rs+s+r} \Theta \bigl(\bigl[\jmath_{\nu}(a_r)\xcdot m\xcdot \gamma_{_{\!\times}}^{-1}(\brh_{1s}^{(1)})\ot_{\hs A} \wt{\gamma}_{\hs A} (\brh_{1s}^{(2)})\ot \ov{\ba}_{1,r-1}\bigr]\bigr).
\end{align*}
The formula for $\ov{d}^0$ follows from this using Lemma~\ref{auxiliar 6'}(1).

\smallskip

\noindent 2)\enspace By the definition of $\Lambda$, \cite{GGV1}*{Proposition~5.19} and~\cite{GGV2}*{Theorem~3.5(2)}, we have
\begin{align*}
\ov{d}^1(\byy) 
\! & = (-1)^{rs} \Theta \xcirc \wh{d}^1\bigl(\bigl[m\xcdot \gamma_{_{\!\times}}^{-1}(\brh_{1s}^{(1)})\ot_{\hs A} \wt{\gamma}_{\hs A} (\brh_{1s}^{(2)})\ot \ov{\ba}_{1r}\bigr]\bigr)\\
& = (-1)^{rs} \Theta \bigl(\bigl[m\xcdot \gamma_{_{\!\times}}^{-1}(\brh_{2s}^{(1)})\gamma(\Pi^R(h_1))\ot_{\hs A} \wt{\gamma}_{\hs A}(\brh_{2s}^{(2)}) \ot \ov{\ba}_{1r}\bigr]\bigr)\\
& +\hs \sum_{i=1}^{r-1} (-1)^{rs+i} \Theta \bigl(\!\bigl[m\xcdot \gamma_{_{\!\times}}^{-1}(\brh_{1s}^{(1)})\hs \ot_{\hs A}  \wt{\gamma}_{\hs A} (\brh_{1,i-1}^{(2)})\hs \ot_{\hs A} \!\stackon[-8pt]{$\gamma(h_i^{(2)})\gamma(h_{i+1}^{(2)})$} {\vstretch{1.5}{\hstretch{4.1}{\widetilde{\phantom{\,\;\;\;\;\;}}}}}\!\ot_{\hs A}\hs \wt{\gamma}_{\hs A}(\brh_{i+2,s}^{(2)})\hs \ot\hs \ov{\ba}_{1r}\bigr]\!\bigr)\\
& + (-1)^{rs+r} \Theta \bigl(\bigl[\gamma(h_s^{(3)})\xcdot m\xcdot \gamma_{_{\!\times}}^{-1}(\brh_{1s}^{(1)})\ot_{\hs A} \wt{\gamma}_{\hs A} (\brh_{1,s-1}^{(2)})\ot h_s^{(2)}\xcdot \ov{\ba}_{1r} \bigr]\bigr).
\end{align*}
The formula for $\ov{d}^1$ follows from this using Lemma~\ref{auxiliar 6'} and the fact that $\ov{\Pi}^R\xcirc \Pi^R = \ov{\Pi}^R$.

\smallskip

\noindent 3)\enspace By the definition of $\Lambda$ and \cite{GGV2}*{Theorem~3.6}, we have
\begin{align*}
\ov{d}^2(\byy) 
& = (-1)^{rs} \Theta \xcirc \wh{d}^2\bigl(\bigl[m\xcdot \gamma_{_{\!\times}}^{-1}(\brh_{1s}^{(1)})\ot_{\hs A} \wt{\gamma}_{\hs A}(\brh_{1s}^{(2)})\ot \ov{\ba}_{1r}\bigr]\bigr)\\
& = (-1)^{rs+s+1} \Theta \bigl(\bigl[\gamma(h_{s-1}^{(3)}h_s^{(3)})\xcdot m\xcdot \gamma_{_{\!\times}}^{-1}(\brh_{1s}^{(1)})\ot_{\hs A} \wt{\gamma}_{\hs A}(\brh_{1,s-2}^{(2)})\ot \mathfrak{T}(h^{(2)}_{s-1},h^{(2)}_s,\ov{\ba}_{1r})\bigr]\bigr).
\end{align*}
The formula for $\ov{d}^2$ follows from this using Lemma~\ref{auxiliar 6'}(1).

\smallskip

\noindent 4)\enspace Let $\wh{X}^{l-1}_{r+l-1,s-l}(R,M)$ be as in \cite{GGV2}*{Notation~3.4}. By Remark~\ref{estable bajo rho = estable bajo chi} and \cite{GGV2}*{Theorem~3.5(3)} this item follows from the fact that $\ov{X}^{l-1}_{r+l-1,s-l}(R,M) = \Theta\bigl(\wh{X}^{l-1}_{r+l-1,s-l}(R,M)\bigr)$.
\end{proof}

\begin{remark}\label{para el calculo de ss} By the second equality in~\eqref{pepe}, we have
$$
\ov{\Pi}^R(h_1) \xcdot \bigl[m \ot \ov{\ba}_{1r}\bigr]  = \bigl[m\xcdot \gamma\bigl(\Pi^R(h)\bigr)\ot \ov{\ba}_{1r}\bigr]\qquad\text{for all $h\in H$, $m\in M$ and $a_1,\dots,a_r\in A$.}
$$
This gives an alternative formula for $\ov{d}^1(\byy)$ in~\eqref{formula alternativa}. We will use this fact in the proof of Proposition~\ref{calculo de ss}.
\end{remark}

\begin{proposition}\label{F^h} For each $h\in H$, the map $\cramped{F_*^h\colon \bigl(M\ot \ov{A}^{\ot^*}\ot, b_*\bigr)\longrightarrow \bigl(M\ot \ov{A}^{\ot^*}\ot, b_*\bigr)}$\index{fr@$F_*^h$|dotfillboldidx}, defined by
$$
F_r^h([m \ot \ov {\ba}_{1r}])\coloneqq \bigl[\gamma(h^{(3)})\xcdot m\xcdot \gamma^{-1}(h^{(1)})\ot  h^{(2)}\xcdot \ov{\ba}_{1r}\bigr],
$$
is a morphism of complexes. Moreover the following facts hold:

\begin{enumerate}[itemsep=0.7ex, topsep=1.0ex, label=\emph{(\arabic*)}]

\item For each $h,l\in H$, the endomorphisms of $\Ho^K_*(A,M)$ induced by $F_*^h\xcirc F_*^l$ and $F_*^{hl}$ coincide.

\item $F_r^l([m\ot \ov{\ba}_{1r}]) = l\xcdot [m\ot \ov{\ba}_{1r}]$, for all $l\in H^{\!L}$ (see~\eqref{acciones}). In particular $F_*^1$ is the identity map.

\end{enumerate}
Conse\-quently, $\Ho^K_*(A,M)$ is a left $H$-module.
\end{proposition}

\begin{proof} Using the first equality in Proposition~\ref{gama iota y gama gama} and Lemma~\ref{propiedad 4}(4), it is easy to see that the maps $F_*^h$ are~well defined. Moreover by Definition~\ref{def modulo algebra debil}(2), the first identity in~\eqref{gama iota y gama gama} and Lemma~\ref{propiedad 4}(4),  they are morphisms of com\-plexes.

\smallskip

For $h,l\in H$, let $\cramped{\bigl(\mathfrak{h}_r\colon M\ot \ov{A}^{\ot^r}\ot \longrightarrow M \ot \ov{A}^{\ot^{r+1}}\ot \bigr)_{r\ge 0}}$ be the family of maps, defined by
$$
\mathfrak{h}_r([m\ot \ov{\ba}_{1r}]) \coloneqq - \bigl[ \gamma(h^{(3)}l^{(3)})\xcdot m\xcdot \gamma^{-1}(l^{(1)})\gamma^{-1}(h^{(1)})\ot  \mathfrak{T}(h^{(2)}),l^{(2)}),\ov {\ba}_{1r})\bigr],
$$
where $\mathfrak{T}(h,l,\ov{\ba}_{1r})$ is as in Theorem~\ref{ppalhom1}(3). In order to prove item~(1) it suffices to show that $\bigl(\mathfrak{h}_r\bigr)_{r\ge 0}$ is~a~ho\-mo\-topy from $F^{hl}_*$ to $F^h_*\xcirc F^l_*$. For this we must check that
\begin{equation}\label{pepe6}
\bigl(F^{hl}_r-F^h_r\xcirc F^l_r\bigr)([m\ot \ov {\ba}_{1r}]) = \begin{cases} \bigl(b \xcirc\mathfrak{h}_0\bigr)([m]) & \text{if $r = 0$,}\\ \bigl(b \xcirc \mathfrak{h}_r+ \mathfrak{h}_{r-1}\xcirc b \bigr)([m\ot\ov {\ba}_{1r}]) & \text{if $r>0$.}\end{cases}
\end{equation}
Let $\byy\coloneqq (\ov{h}\ot_{H^{\!L}} \ov{l}) \ot_{H^{\!L}} \bigl[m\ot \ov{\ba}_{1r}\bigr] \in\ov{X}_{r2}(M)$. Since $(\ov{X}_*(M),\ov{d}_*)$ is a chain complex,
$$
\ov{d}^1(\ov{d}^1(\byy)) = \begin{cases} \ov{d}^0(\ov{d}^2(\byy)) & \text{if $r = 0$,}\\ \ov{d}^0(\ov{d}^2(\byy)) + \ov{d}^2(\ov{d}^0(\byy)) & \text{if $r>0$.}\end{cases}
$$
By this fact and Theorem~\ref{ppalhom1}, to prove equality~\eqref{pepe6} it suffices to show that
\begin{equation}\label{igualdad}
\begin{aligned}
\ov{d}^1(\ov{d}^1(\byy)) & = \bigl[\gamma\bigl(h^{(3)}l^{(3)}\bigr)\xcdot m\xcdot \gamma^{-1}\bigl(h^{(1)}l^{(1)}\bigr)\ot h^{(2)}l^{(2)}\xcdot \ov{\ba}_{1r}\bigr]\\
& - \bigl[\gamma(h^{(3)})\gamma(l^{(3)})\xcdot m\xcdot\gamma^{-1}(l^{(1)})\gamma^{-1}(h^{(1)})\ot h^{(2)}\xcdot \bigl(l^{(2)}\xcdot \ov {\ba}_{1r}\bigr)\bigr].
\end{aligned}
\end{equation}
Now, a direct computation shows that
\begin{align*}
\ov{d}^1\bigl(\ov{d}^1(\byy)\bigr) & = (-1)^r \ov{d}^1\bigl(\ov{\Pi}^R(h)\,\ov{l}\ot_{H^{\!L}} [m\ot \ov{\ba}_{1r}]\bigr) - (-1)^r \ov{d}^1\bigl(\ov{hl}\ot_{H^{\!L}} [m \ot \ov{\ba}_{1r}]\bigr)\\
& + (-1)^r \ov{d}^1\bigl(\ov{h} \ot_{H^{\!L}}[\gamma(l^{(3)})\xcdot m\xcdot\gamma^{-1}(l^{(1)})\ot  l^{(2)}\xcdot \ov {\ba}_{1r}]\bigr)\\
& = \bigl[m\xcdot \gamma\bigl(\Pi^R\bigl(\ov{\Pi}^R(h)l\bigr)\bigr)\ot \ov{\ba}_{1r}\bigr]\\
& - \bigl[\gamma\bigl(\ov{\Pi}^R(h)^{(3)}l^{(3)}\bigr)\xcdot m\xcdot\gamma^{-1}\bigl(\ov{\Pi}^R(h)^{(1)}l^{(1)}\bigr)\ot \ov{\Pi}^R(h)^{(2)}l^{(2)}\xcdot \ov{\ba}_{1r}\bigr]\\
& - \bigl[m\xcdot \gamma\bigl(\Pi^R(hl)\bigr)\ot \ov{\ba}_{1r}\bigr]
+ \bigl[\gamma\bigl(h^{(3)}l^{(3)}\bigr)\xcdot m\xcdot \gamma^{-1}\bigl(h^{(1)}l^{(1)}\bigr)\ot h^{(2)}l^{(2)}\xcdot \ov {\ba}_{1r}\bigr]\\
& + \bigl[ \gamma(l^{(3)})\xcdot m\xcdot\gamma^{-1}(l^{(1)})\gamma\bigl(\Pi^R(h)\bigr) \ot  l^{(2)}\xcdot \ov{\ba}_{1r}\bigr]\\
& - \bigl[\gamma(h^{(3)})\gamma(l^{(3)})\xcdot m\xcdot\gamma^{-1}(l^{(1)})\gamma^{-1}(h^{(1)})\ot h^{(2)}\xcdot (l^{(2)}\xcdot \ov {\ba}_{1r})\bigr].
\end{align*}
Using the second equality in~\eqref{pepe}, and the fact that, by Lemma~\ref{prop esp''} and identity~\eqref{aux2},
\begin{align*}
\bigl[\gamma\bigl(l^{(3)}\bigr)\xcdot m\xcdot\gamma^{-1}\bigl(l^{(1)}\bigr)\gamma(\Pi^R(h))\ot l^{(2)}\xcdot \ov{\ba}_{1r}\bigr] & = \bigl[\gamma\bigl(l^{(3)}\bigr)\xcdot m\xcdot\gamma^{-1}\bigl(l^{(1)}\bigr)\gamma(S(\ov{\Pi}^R(h)))\ot l^{(2)}\xcdot \ov{\ba}_{1r}\bigr]\\
& = \bigl[\gamma\bigl(l^{(3)}\bigr)\xcdot m\xcdot\gamma^{-1}\bigl(\ov{\Pi}^R(h) l^{(1)}\bigr)\ot l^{(2)}\xcdot \ov{\ba}_{1r}\bigr]\\
& = \bigl[\gamma\bigl(\ov{\Pi}^R(h)^{(3)}l^{(3)}\bigr)\xcdot m\xcdot\gamma^{-1}\bigl(\ov{\Pi}^R(h)^{(1)}l^{(1)}\bigr)\ot  \bigl(\ov{\Pi}^R(h)^{(2)}l^{(2)}\bigr)\xcdot \ov{\ba}_{1r}\bigr],
\end{align*}
we obtain that equality~\eqref{igualdad} holds.

We  next prove item~(2). Let $l\in H^{\!L}$. For $r=1$, we have
$$
F_0^l([m])=\bigl[\gamma(l^{(2)})\xcdot m\xcdot \gamma^{-1}(l^{(1)})\bigr]=\bigl[m\xcdot \gamma^{-1}(l^{(1)})\gamma(l^{(2)})\bigr]=\bigl[m\xcdot \gamma(\Pi^R(l))\bigr] = \bigl[m\xcdot \gamma(S(l))\bigr],
$$
where the second equality holds by~\cite{BNS1}*{(2.6a)}, \cite{GGV1}*{Proposition~4.5} and Remark~\ref{incluido}; the third one, by~\cite{GGV1}*{Proposition~5.19}; and the fourth one, by the second identity in~\eqref{pepe} and the fact that $\ov{\Pi}^R(l) = l$. Assume now $r\ge 1$. To begin note that, by Remark~\ref{incluido}, ~\cite{GGV1}*{Proposition~4.5} and De\-fi\-ni\-tion~\ref{def modulo algebra debil}(2),
$$
F_r^l([m\ot \ov {\ba}_{1r}])=\bigl[m\xcdot \gamma^{-1}(l^{(1)})\ot l^{(2)}\xcdot \ov{\ba}_{1,r-1}\ot (l^{(3)}\xcdot a_r)(l^{(4)}\xcdot 1_A)\bigr] = \bigl[m\xcdot \gamma^{-1}(l^{(1)})\ot l^{(2)}\xcdot \ov{\ba}_{1r}\bigr].
$$
We claim that
\begin{equation}
l^{(1)}\ot_k l^{(2)}\xcdot \ba_{1r} = l^{(1)}\ot_k (l^{(2)}\xcdot 1_A)a_1\ot \ba_{2r}.
\end{equation}
By~\cite{BNS1}*{(2.6a)} and Proposition~\ref{modulo algebra debil}(1),
$$
l^{(1)}\ot_k l^{(2)}\xcdot \ba_{1r} = l^{(1)}\ot_k l^{(2)}\xcdot \ba_{1,r-1}\ot l^{(3)}\xcdot a_r = l^{(1)}\ot_k l^{(2)}\xcdot \ba_{1,r-1}\ot (l^{(3)}\xcdot 1_A) a_r.
$$
If $r=1$ this ends the proof of the claim. Assume that $r>1$. In this case, by Remark~\ref{incluido} and De\-fi\-ni\-tion~\ref{def modulo algebra debil}(2),
$$
l^{(1)}\ot_k l^{(2)}\xcdot \ba_{1r} = l^{(1)}\ot_k l^{(2)}\xcdot \ba_{1,r-2}\ot (l^{(3)}\xcdot a_{r-1})(l^{(4)}\xcdot 1_A) \ot a_r = l^{(1)}\ot_k l^{(2)}\xcdot \ba_{1,r-1}\ot a_r,
$$
and the claim follows from an evident inductive argument. Thus, by~\cite{GGV1}*{Proposition~4.5}  and Remark~\ref{incluido},
$$
F_r^l([m\ot\ov {\ba}_{1r}]) = \bigl[m\xcdot \gamma^{-1}(l^{(1)})\ot (l^{(2)}\xcdot 1_A)a_1\ot\ba_{2r}\bigr] = \bigl[m\xcdot \gamma^{-1}(l^{(1)})\jmath_{\nu}(l^{(2)}\xcdot 1_A)\ot \ba_{1r}\bigr],
$$
and so, by~\cite{GGV1}*{Propositions~4.5 and~5.19}, the second identity in~\eqref{pepe} and the fact that $\ov{\Pi}^R(l) = l$,
$$
F_r^l([m\ot\ov {\ba}_{1r}]) = \bigl[m\xcdot \gamma(\Pi^R(l))\ot \ba_{1r}\bigr] = \bigl[m\xcdot \gamma(S(l))\ot\ba_{1r}\bigr],
$$
as desired.
\end{proof}

\begin{example}\label{A es K homologia} If $A=K$, then $\Ho^K_*(E,M) = \Ho_*\bigl(H,M\ot\bigr)$, where $M\ot$ is considered as a left $H$-module via the action given by $h\xcdot [m]\coloneqq [\gamma(h^{(2)})\xcdot m\xcdot \gamma^{-1}(h^{(1)})]$.
\end{example}

In the following proposition, for each $r\ge 0$, we consider $M\ot \ov{A}^{\ot^*}\ot$ as a left $H$-module via the action introduced in Proposition~\ref{F^h}, and we consider $M\ot \ov{A}^{\ot^*}\ot$ as a left $H^{\! L}$-module via the canonical inclusion of $H^{\! L}$ into $H$. In Proposition~\ref{F^h}(2) we prove that these structures coincide with the ones introduced in~\eqref{acciones} (which are the ones used in Theorem~\ref{ppalhom1}).

\begin{proposition}\label{calculo de ss} The spectral sequence of~\cite{GGV2}*{(3.3)} satisfies
$$
E^1_{rs} = \ov{H}^{\ot_{\!H^{\!L}}^s}\ot_{\!H^{\!L}} \Ho^K_r(A,M) \qquad\text{and}\qquad E^2_{rs} = \Ho_s\bigl(H, \Ho^K_r(A,M)\bigr).
$$
\end{proposition}

\begin{proof} For each $i,n\ge 0$, let $F^i(\ov{X}_n(M))\coloneqq \bigoplus_{s=0}^i \ov{X}_{n-s,s}(M)$ \index{fz@$F^i(\ov{X}_n(M))$|dotfillboldidx}. The chain complex $(\ov{X}_*(M),\ov{d}_*)$ is filtrated by
\begin{equation}\label{ss1}
0=F^{-1}(\ov{X}_*(M))\subseteq F^0(\ov{X}_*(M))\subseteq F^1(\ov{X}_*(M))\subseteq F^2(\ov{X}_*(M))\subseteq F^3(\ov{X}_*(M))\subseteq \dots.
\end{equation}
Moreover the isomorphism $\Theta_*\colon (\wh{X}_*(M),\wh{d}_*)\to (\ov{X}_*(M),\ov{d}_*)$ preserves filtrations, where we consider the chain complex $(\wh{X}_*(M),\wh{d}_*)$ endowed with the filtration introduced at the beginning of \cite{GGV2}*{Subsection~3.2}. So, the spectral sequence of \cite{GGV2}*{(3.3)} coincides with the spectral sequence determined by the filtration~\eqref{ss1}. Thus, the formula for $E^1_{rs}$ follows from Theorem~\ref{ppalhom1}(1) and the formula for $E^2_{rs}$ follows from Theorem~\ref{ppalhom1}(2), Remark~\ref{para el calculo de ss} and Proposition~\ref{hom}(1).
\end{proof}

\section{Hochschild cohomology of cleft extensions}\label{Hochschild cohomology of cleft extensions}
Let $H$, $A$, $\rho$, $\chi_{\rho}$, $f$, $\mathcal{F}_f$, $E$, $K$, $M$, $\nu$, $\jmath_{\nu}$ and $\gamma$ be as in the previous section. Assume that the hypotheses of that section are fulfilled. In particular $H$ is a weak Hopf algebra, $A$ is a weak module algebra and $f$ is convolution invertible. Let $\gamma^{-1}$ be as in equality~\eqref{gamma{-1}}. By definition the Hochschild cohomology $\Ho_{\hs K}^*(E,M)\index{hh$@$\Ho_{\hs K}^*(E,M)$|dotfillboldidx}$, of the $K$-algebra $E$ with co\-e\-fficients in an $E$-bimodule $M$, is the cohomology of the normalized Hochschild cochain complex $\cramped{\bigl(\Hom_{K^e}\bigl(\ov{E}^{\ot *},M\bigr),b^*\bigr)}$, where $b^*$ is the canonical Hochschild boundary map. In \cite{GGV2}*{Section~4} a cochain complex $(\wh{X}^*(M),\wh{d}^*)$\index{xxg@$\wh{X}^n(M)$|dotfillboldidx} was obtained, simpler than the canonical one, that gives the Hochschild cohomology of $E$ with coefficients in $M$. In this section we prove that $(\wh{X}^*(M),\wh{d}^*)$ is isomorphic to a simpler complex $(\ov{X}^*(M),\ov{d}^*)$. When $K$ is separable, the complex $(\ov{X}^*(M),\ov{d}^*)$ gives the absolute Hochschild cohomology of $E$ with coefficients in $M$. We recall from \cite{GGV2}*{Section~4} that
$$
\wh{X}^n(M) = \bigoplus_{\substack{r,s\ge 0\\ r+s = n}} \wh{X}^{rs}(M),\qquad\text{where $\wh{X}^{rs}(M)\coloneqq \Hom_{(A,K)}\bigl(\wt{E}^{\ot_{\hs A}^s}\ot\ov{A}^{\ot^r},M\bigr)$,}\index{xxe@$\wh{X}^{rs}(M)$|dotfillboldidx}
$$
and that there exist maps $\wh{d}_l^{rs}\colon\wh{X}^{r+l-1,s-l}(M)\longrightarrow \wh{X}^{rs}(M)$\index{df@$\wh{d}_l^{rs}$|dotfillboldidx} such that
$$
\wh{d}^n(\alpha)\coloneqq \sum^{r+1}_{l=0} \wh{d}_l^{r-l+1,n-r+l-1} (\alpha)\index{dg@$\wh{d}^n$|dotfillboldidx} \qquad\text{for all $\alpha\in \wh{X}^{r,n-r-1}(M)$.}
$$
By~\cite{GGV1}*{Propositions~2.22 and~4.6}, we know that $M$ is a $(K,K\ot_k H^{\!L})$-bimodule via
$$
\lambda \xcdot m\xcdot (\lambda'\ot_k l)\coloneqq \jmath_{\nu}(\lambda)\gamma(S(l))\xcdot m\xcdot \jmath_{\nu}(\lambda').
$$
For each $r,s\ge 0$, we set $\ov{X}^{rs}(M)\coloneqq\Hom_{(K,K\ot_k H^{\!L})}\bigl(\ov{H}^{\ot_{H^{\!L}}^s}\ot_k \ov{A}^{\ot^r},M\bigr)$ \index{xxf@$\ov{X}^{rs}(M)$|dotfillboldidx}, where we consider $\ov{H}^{\ot_{\! H^{\!L}}^s}\ot_k\ov{A}^{\ot^r}$ as a $(K,K\ot_k H^{\!L})$-bimodule via
$$
\lambda \xcdot \bigl(\ov{\bh}_{1s}\ot_k\ov{\ba}_{1r}\bigr)\xcdot (\lambda'\ot_k l) \coloneqq \ov{\bh}_{1s}\xcdot l \ot_k \lambda \xcdot  \ov{\ba}_{1r}\xcdot \lambda'.
$$
Let $M^{\!K}\coloneqq \{m\in M: \lambda\xcdot m = m\xcdot \lambda\text{ for all $\lambda\in K$}\}$. Since $\ov{H}^{\ot_{\! H^{\!L}}^0}  = H^{\!L}$ and $\ov{A}^{\ot^0} = K$, we have
\begin{equation}\label{ec10'}
\ov{X}^{r0}(M)\simeq \Hom_{K^e} \bigl(\ov{A}^{\ot ^r},M \bigr)\qquad\text{and}\qquad \ov{X}^{0s}(M) \simeq \Hom_{\!H^{\!L}}\bigl(\ov{H}^{\ot_{\! H^{\!L}}^s},M^{\!K}\bigr),
\end{equation}
where $M^{\!K}$ is considered as a right $H^{\!L}$-module via $m\xcdot l\coloneqq \gamma(S(l))\xcdot m$.

\begin{remark}\label{pepitito3} For each $r,s\ge 0$, we have $\ov{X}^{rs}(M)\simeq \Hom_{\!H^{\!L}}\bigl(\ov{H}^{\ot_{\! H^{\!L}}^s},\Hom_{K^e}\bigl(\ov{A}^{\ot ^r},M \bigr)\bigr)$, where $\Hom_{K^e}\bigl(\ov{A}^{\ot^r},M\bigr)$ is considered as right $H^{\!L}$-module via $(\beta\xcdot l)(\ov{\ba}_{1r})\coloneqq \gamma(S(l))\xcdot \beta(\ov{\ba}_{1r})$.
\end{remark}

\begin{proposition}\label{const de aplicaciones en coh} For each $r,s\ge 0$ there exist maps
$$
\Theta^{rs}\colon \ov{X}^{rs}(M)\longrightarrow \wh{X}^{rs}(M)\index{zza@$\Theta^{rs}$|dotfillboldidx}\qquad\text{and}\qquad \Lambda^{rs}\colon \wh{X}^{rs}(M)\longrightarrow \ov{X}^{rs}(M),\index{zmm@$\Lambda^{rs}$|dotfillboldidx}
$$
such that for
$$
\bx\coloneqq \stackon[-8pt]{$\jmath_{\nu}(a_1)\gamma(h_1)$}{\vstretch{1.5}{\hstretch{2.8} {\widetilde{\phantom{\;\;\;\;\;\;}}}}} \ot_{\hs A}\cdots \ot_{\hs A} \stackon[-8pt]{$\jmath_{\nu}(a_s)\gamma(h_s)$} {\vstretch{1.5}{\hstretch{2.8} {\widetilde{\phantom{\;\;\;\;\;\;}}}}}\ot \ov{\ba}_{s+1,s+r} \in \wt{E}^{\ot_{\hs A}^s}\ot\ov{A}^{\ot^r} \quad\text{and}\quad \byy\coloneqq  \ov{\bh}_{1s}\ot_k \ov{\ba}_{1r}\in \ov{H}^{\ot_{\! H^{\!L}}^s}\ot_k \ov{A}^{\ot^r},
$$
we have
\begin{align*}
&\Theta^{rs}(\beta)(\bx)\coloneqq (-1)^{rs}\jmath_{\nu}(a_1)\gamma(h_1^{(1)})\cdots\jmath_{\nu}(a_s)\gamma(h_s^{(1)})\xcdot \beta\bigl(\ov{\bh}_{1s}^{(2)} \ot_k \ov{\ba}_{s+1,s+r}\bigr)\\
\shortintertext{and}
&\Lambda^{rs}(\alpha)(\byy)\coloneqq (-1)^{rs} \gamma_{_{\!\times}}^{-1}(\brh_{1s}^{(1)})\cdot \alpha\bigl(\wt{\gamma}_{\hs A}(\brh_{1s}^{(2)}) \ot \ov{\ba}_{1r}\bigr).
\end{align*}
Moreover the maps $\Theta^{rs}$ and $\Lambda^{rs}$ are inverse one of each other.
\end{proposition}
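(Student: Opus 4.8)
The plan is to prove the statement by establishing, separately, the well-definedness of the two prescriptions; this proposition is the cochain counterpart of Proposition~\ref{const de aplicaciones}, and the verifications mirror the ones carried out there, using the same structural results. For $\Theta^{rs}$ I would fix $\beta\in\ov{X}^{rs}(M)$ and check that $\bx\mapsto\Theta^{rs}(\beta)(\bx)$ depends only on the class of $\bx$ in $\wt{E}^{\ot_{\hs A}^s}\ot\ov{A}^{\ot^r}$ and defines a left $A$-linear, right $K$-linear map into $M$. Dually, for $\Lambda^{rs}$ I would fix $\alpha\in\wh{X}^{rs}(M)$ and check that $\byy\mapsto\Lambda^{rs}(\alpha)(\byy)$ descends to $\ov{H}^{\ot_{H^{\!L}}^s}\ot_k\ov{A}^{\ot^r}$ and is a morphism of $(K,K\ot_k H^{\!L})$-bimodules. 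After expanding each formula, every one of these checks reduces to an identity already available.

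For $\Theta^{rs}$: the formula vanishes when some $h_i\in H^{\!L}$ (so that it factors through each copy of $\wt{E}=E/\jmath_{\nu}(A)$, since $\gamma(l)=\jmath_{\nu}(l\xcdot 1_A)$ for $l\in H^{\!L}$ and $\gamma(1)=1_E$), because by Proposition~\ref{le h en HR0} then $\ov{h_i^{(2)}}=0$, whence $\beta\bigl(\ov{\bh}_{1s}^{(2)}\ot_k\ov{\ba}_{s+1,s+r}\bigr)=0$. The $A$-balancing across two consecutive factors $\jmath_{\nu}(a_i)\gamma(h_i)$ and $\jmath_{\nu}(a_{i+1})\gamma(h_{i+1})$ is handled exactly as conditions~(2)--(3) in the proof of Proposition~\ref{const de aplicaciones}, moving $\jmath_{\nu}(a)$ through $\gamma(h)$ by Proposition~\ref{gama iota y gama gama} and invoking items~(1) and~(2) of Lemma~\ref{propiedad 4}; compatibility with the $\ov{A}$-quotient, the $K$-balancing in the last $r$ slots, and right $K$-linearity come from the corresponding balancing properties of $\beta$ (and Lemma~\ref{propiedad 4}(2) when $r=0$). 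Left $A$-linearity is immediate from the leading factor $\jmath_{\nu}(a_1)$ in the formula.

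For $\Lambda^{rs}$: the vanishing when some $h_i\in H^{\!L}$ follows, as in the $\Lambda$ half of Proposition~\ref{const de aplicaciones}, from $\gamma^{-1}(l^{(1)})\ot_k\gamma(l^{(2)})\in E\ot_k\jmath_{\nu}(A)$ for $l\in H^{\!L}$ (Propositions~\ref{le h en HR0} and~\ref{auxiliar4}(1)), so the offending slot of $\wt{\gamma}_{\hs A}(\brh_{1s}^{(2)})$ is zero and $\alpha$ is evaluated at $0$. The two $K$-equivariances and the passage to $\ov{A}^{\ot^r}$ in the last slots are governed by Lemma~\ref{prop esp'}, exactly as conditions~(3)--(4) there, while the $H^{\!L}$-balancing among the $\ov{H}$-factors uses Proposition~\ref{le h en HR}(2) and Lemma~\ref{prop esp''}. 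The main obstacle is the right $H^{\!L}$-equivariance, namely that
\[
\Lambda^{rs}(\alpha)\bigl(\ov{\bh}_{1s}\xcdot l\ot_k\ov{\ba}_{1r}\bigr)=\gamma(S(l))\xcdot\Lambda^{rs}(\alpha)\bigl(\ov{\bh}_{1s}\ot_k\ov{\ba}_{1r}\bigr)\qquad\text{for }l\in H^{\!L},
\]
which amounts to popping a factor $\gamma(S(l))$ out of the head of the string $\gamma_{_{\!\times}}^{-1}(\brh_{1s}^{(1)})$. Writing $\ov{\bh}_{1s}\xcdot l=\ov{\bh}_{1,s-1}\ot_{H^{\!L}}\ov{h_s l}$ and using Proposition~\ref{le h en HR}(2) to rewrite $h_s^{(1)}l^{(1)}\ot_k h_s^{(2)}l^{(2)}$ as $h_s^{(1)}l\ot_k h_s^{(2)}$, Lemma~\ref{prop esp''} gives $\gamma^{-1}(h_s^{(1)}l)=\gamma(S(l))\gamma^{-1}(h_s^{(1)})$, so that $\gamma(S(l))$ factors to the left of $\gamma_{_{\!\times}}^{-1}(\brh_{1s}^{(1)})$ and the identity drops out. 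This is precisely the cochain transpose of item~(5) in the proof of Proposition~\ref{const de aplicaciones}, and it is the one place where the antipode and the description $m\xcdot l=\gamma(S(l))\xcdot m$ of the $H^{\!L}$-action on $M$ genuinely enter; the remaining verifications are routine Sweedler manipulations, after which the proposition follows.
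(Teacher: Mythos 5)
Your proposal is correct and follows exactly the route the paper intends: its entire proof of Proposition~\ref{const de aplicaciones en coh} is the instruction to mimic the proof of Proposition~\ref{const de aplicaciones}, and you carry out that transposition faithfully, with the same checklist of balancing conditions and the same supporting results (Lemma~\ref{propiedad 4}, Lemmas~\ref{prop esp'} and~\ref{prop esp''}, Propositions~\ref{le h en HR0}, \ref{le h en HR}(2), \ref{gama iota y gama gama} and~\ref{auxiliar4}(1)). In particular your identification of the right $H^{\!L}$-equivariance of $\Lambda^{rs}(\alpha)$ as the cochain transpose of condition~(5) in that earlier proof, handled via $\gamma^{-1}(h_s^{(1)}l)=\gamma(S(l))\gamma^{-1}(h_s^{(1)})$, is precisely the intended argument.
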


\begin{proof} Mimic the proof of Proposition~\ref{const de aplicaciones} .
\end{proof}

For each $0\le l\le s$ and $r\ge 0$ such that $r+l\ge 1$, let $\ov{d}_l^{rs}\colon \ov{X}^{r+l-1,s-l}(M)\to \ov{X}^{rs}(M)$ be the map $\ov{d}_l^{rs}\coloneqq \Lambda^{r+l-1,s-l}\xcirc \wh{d}_l^{rs} \xcirc \Theta^{rs}$.\index{dg1@$\ov{d}_l^{rs}$|dotfillboldidx}

\begin{theorem}\label{ppalchom} The Hochschild cohomology of the $K$-algebra $E$ is the cohomology of $(\ov{X}^*(M),\ov{d}^*)$, where
$$
\ov{X}^n(M)\coloneqq \bigoplus_{r+s=n}\ov{X}^{rs}(M)\index{xxk@$\ov{X}^n(M)$|dotfillboldidx}\quad\text{and}\quad \ov{d}^n(\beta)\coloneqq \sum^{r+1}_{l=0} \ov{d}_l^{r-l+1,n-r+l-1}(\beta)\quad\text{for all $\beta\in \ov{X}^{r,n-r-1}(M)$.}\index{dg2@$\ov{d}^n$|dotfillboldidx}
$$
\end{theorem}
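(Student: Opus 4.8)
The plan is to mirror, \emph{mutatis mutandis}, the argument used for the homological statement Theorem~\ref{ppalhom}. All of the substantive work has already been isolated in Proposition~\ref{const de aplicaciones en coh}, which produces the component maps $\Theta^{rs}$ and $\Lambda^{rs}$, and in Proposition~\ref{son inversos coh}, which shows that for each fixed pair $(r,s)$ these two maps are mutually inverse. So the only thing left to do is to assemble the components into an isomorphism of cochain complexes and then invoke \cite{GGV2}.

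Concretely, first I would set $\Theta^n \coloneqq \bigoplus_{r+s=n}\Theta^{rs}\colon \ov{X}^n(M)\to \wh{X}^n(M)$ and $\Lambda^n \coloneqq \bigoplus_{r+s=n}\Lambda^{rs}\colon \wh{X}^n(M)\to \ov{X}^n(M)$. By Proposition~\ref{son inversos coh} these are mutually inverse isomorphisms of graded $k$-modules. The key observation is that the differential $\ov{d}^*$ was \emph{defined} by conjugating the components of $\wh{d}^*$: since each $\ov{d}_l^{rs}$ is, by its definition just above the theorem, the conjugate of $\wh{d}_l^{rs}$ by the appropriate components of $\Theta^{\bullet}$ and $\Lambda^{\bullet}$, and since $\Theta$ and $\Lambda$ are componentwise inverse, summing over $l$ on each summand $\ov{X}^{r,n-r-1}(M)$ yields $\ov{d}^n = \Lambda^n\xcirc \wh{d}^n\xcirc \Theta^{n-1}$. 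Equivalently $\Lambda^n\xcirc \wh{d}^n = \ov{d}^n\xcirc \Lambda^{n-1}$, so $\Lambda^*$ (with inverse $\Theta^*$) is an isomorphism of cochain complexes from $(\wh{X}^*(M),\wh{d}^*)$ to $(\ov{X}^*(M),\ov{d}^*)$.

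Finally, since $(\wh{X}^*(M),\wh{d}^*)$ computes the Hochschild cohomology $\Ho_{\hs K}^*(E,M)$ of the $K$-algebra $E$ by \cite{GGV2}*{Section~4}, the isomorphic complex $(\ov{X}^*(M),\ov{d}^*)$ computes the same cohomology, which is exactly the assertion. This is the verbatim cohomological analogue of the one-line proof of Theorem~\ref{ppalhom}.

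I do not expect a genuine obstacle here, as the difficulty was already discharged in the preceding two propositions. The one point requiring care is bookkeeping with the index shifts: one must check that the superscripts in the definition of $\ov{d}_l^{rs}$ and in the total differential $\ov{d}^n(\beta)=\sum_{l=0}^{r+1}\ov{d}_l^{r-l+1,n-r+l-1}(\beta)$ match up, so that the componentwise conjugation really does globalize to $\ov{d}^n=\Lambda^n\xcirc \wh{d}^n\xcirc \Theta^{n-1}$; in particular one verifies that $\Theta^{r,n-r-1}(\beta)$ is precisely the $(r,n-r-1)$-component on which $\wh{d}^n$ acts, and that the cohomological degree conventions (the differential raising $n$ by one) are consistent with the map directions $\Theta^{rs}\colon \ov{X}^{rs}\to\wh{X}^{rs}$. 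This is purely formal once the indices are tracked, and mirrors the final line of the proof of Theorem~\ref{ppalhom}.
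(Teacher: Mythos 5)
Your proposal is correct and is essentially the paper's own proof: the paper likewise assembles $\Theta^n\coloneqq\bigoplus_{r+s=n}\Theta^{rs}$, invokes Proposition~\ref{son inversos coh} together with the fact that the $\ov{d}_l^{rs}$ are by definition conjugates of the $\wh{d}_l^{rs}$, and concludes that $\Theta^*$ is an isomorphism of cochain complexes onto $(\wh{X}^*(M),\wh{d}^*)$, which computes $\Ho_{\hs K}^*(E,M)$ by \cite{GGV2}. Your bookkeeping remark about the index shifts is well placed, since the displayed formula for $\ov{d}_l^{rs}$ just before the theorem has its $\Theta$/$\Lambda$ superscripts interchanged relative to what type-checks, namely $\Lambda^{rs}\xcirc\wh{d}_l^{rs}\xcirc\Theta^{r+l-1,s-l}$.
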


\begin{proof} By Proposition~\ref{const de aplicaciones en coh} and the definition of $(\ov{X}_*(M),\ov{d}_*)$, the map $\Theta^*\colon (\ov{X}^*(M),\ov{d}^*)\longrightarrow (\wh{X}^*(M),\wh{d}^*)$, given by $\Theta^n\coloneqq \bigoplus_{r+s = n} \Theta^{rs}$, is an isomorphism of complexes.
\end{proof}

By~\cite{GGV2}*{Remark~4.2}, if $f$ takes its values in $K$, then $(\ov{X}^*(M),\ov{d}^*)$ is the total cochain com\-plex of the double complex $(\ov{X}^{**}(M),\ov{d}_0^{**},\ov{d}_1^{**})$; while, if $K=A$, then $(\ov{X}^*(M),\ov{d}^*) = (\ov{X}^*(M),\ov{d}_1^{0*})$.

\begin{lemma}\label{auxiliar 6''} Let $\beta\in\ov{X}^{rs}(M)$, $a,a_1\dots,a_r\in A$, $h_1,\dots, h_s\in H$ and $z\in H^{\!R}$.

\begin{enumerate}[itemsep=0.7ex, topsep=1.0ex, label=\emph{(\arabic*)}]

\item We have $\gamma_{_{\!\times}}^{-1}(\brh_{1s}^{(1)})\xcdot\Theta(\beta) \bigl(\wt{\gamma}_{\hs A}(\brh_{1s}^{(2)})\xcdot \jmath_{\nu}(a)\ot \ov{\ba}_{1r}\bigr) = (-1)^{rs} \jmath_{\nu}(a)\xcdot\beta\bigl(\ov{\bh}_{1s}\ot_k \ov{\ba}_{1r}\bigr)$.

\item For $s\ge 2$ and $1\le i<s$, we have
\begin{align*}
& \gamma_{_{\!\times}}^{-1}(\brh_{1s}^{(1)})\xcdot\Theta(\beta)\Bigl(\wt{\gamma}_{\hs A}(\brh_{1,i-1}^{(2)})\ot_{\hs A}\!\! \hspace{-0.5pt}\stackon[-8pt] {$\gamma(h_i^{(2)}) \gamma(h_{i+1}^{(2)})$} {\vstretch{1.5}{\hstretch{3.2} {\widetilde{\phantom{\;\;\;\;\;\;\;\,}}}}}\!\! \ot_{\hs A} \wt{\gamma}_{\hs A}(\brh_{i+2,s}^{(2)})\ot \ov{\ba}_{1r}\Bigr)\\
&= (-1)^{rs} \beta\bigl(\ov{\bh}_{1,i-1}\ot_{H^{\!L}} \ov{h_ih_{i+1}} \ot_{H^{\!L}}\ov{\bh}_{i+2,s}\ot_k \ov{\ba}_{1r} \bigr).
\end{align*}

\item We have $\gamma_{_{\!\times}}^{-1}(\brh_{1s}^{(1)})\gamma(z)\xcdot\Theta(\beta)\bigl(\wt{\gamma}_{\hs A}(\brh_{1s}^{(2)})\ot\ov{\ba}_{1r}\bigr) =  (-1)^{rs} \beta \bigl(\ov{\Pi}^R(z)\xcdot \ov{\bh}_{1s}\ot_k \ov{\ba}_{1r}\bigr)$.

\end{enumerate}

\end{lemma}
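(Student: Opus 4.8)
The plan is to prove all three identities in exactly the way the cohomological companions Propositions~\ref{const de aplicaciones en coh} and~\ref{son inversos coh} were handled, namely by transcribing the proof of the homological Lemma~\ref{auxiliar 6'} into the dual direction. The common mechanism is the following chain of moves. After replacing $\Theta=\Theta^{rs}$ by its formula from Proposition~\ref{const de aplicaciones en coh}, the factor $\gamma_{_{\!\times}}^{-1}(\brh_{1s}^{(1)})$ sitting on the left is confronted with the product $\gamma_{_{\!\times}}(\brh_{1s}^{(2)})$ generated by $\Theta$, and (possibly with a $\gamma(z)$ wedged in between) the resulting element $\gamma_{_{\!\times}}^{-1}(\brh_{1s}^{(1)})\gamma(z)\gamma_{_{\!\times}}(\brh_{1s}^{(2)})\ot_k\ov{\bh}_{1s}^{(3)}$ is collapsed by Lemma~\ref{auxiliar 6} to $\gamma(1^{(1)})\ot_k\ov{\Pi}^R(z)\xcdot\ov{\bh}_{1s}\xcdot 1^{(2)}$; one then strips the trailing $1^{(2)}$ (which lies in $H^{\!L}$ by Proposition~\ref{le h en HR0}) off the $\ov{H}$-argument of $\beta$ using the right $H^{\!L}$-linearity $\beta(\ov{\bh}_{1s}\xcdot 1^{(2)}\ot_k\ov{\ba}_{1r})=\gamma(S(1^{(2)}))\xcdot\beta(\ov{\bh}_{1s}\ot_k\ov{\ba}_{1r})$, and finally absorbs $\gamma(1^{(1)})\gamma(S(1^{(2)}))=1_E$ by Proposition~\ref{auxiliar3}.

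Part~(3) runs this mechanism verbatim with the given $z\in H^{\!R}$: evaluating $\Theta(\beta)$ on $\wt{\gamma}_{\hs A}(\brh_{1s}^{(2)})\ot\ov{\ba}_{1r}$ turns the left-hand side into $(-1)^{rs}\gamma_{_{\!\times}}^{-1}(\brh_{1s}^{(1)})\gamma(z)\gamma_{_{\!\times}}(\brh_{1s}^{(2)})\xcdot\beta(\ov{\bh}_{1s}^{(3)}\ot_k\ov{\ba}_{1r})$, Lemma~\ref{auxiliar 6} rewrites this as $(-1)^{rs}\gamma(1^{(1)})\xcdot\beta(\ov{\Pi}^R(z)\xcdot\ov{\bh}_{1s}\xcdot 1^{(2)}\ot_k\ov{\ba}_{1r})$, and the last two moves give $(-1)^{rs}\beta(\ov{\Pi}^R(z)\xcdot\ov{\bh}_{1s}\ot_k\ov{\ba}_{1r})$, as claimed.

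Parts~(1) and~(2) are the case $z=1$ (so $\ov{\Pi}^R(1)=1$), but each needs one preparatory rewriting to realign the coproduct legs before the telescoping applies. In part~(1) I would first use the first identity of Proposition~\ref{gama iota y gama gama} to write $\wt{\gamma}(h_s^{(2)})\xcdot\jmath_{\nu}(a)=\wt{\jmath_{\nu}(h_s^{(2)}\xcdot a)\gamma(h_s^{(3)})}$; after evaluating $\Theta(\beta)$ I would push $\jmath_{\nu}(h_s^{(2)}\xcdot a)$ back through $\gamma(h_s^{(3)})$ by the same proposition, so that the $\gamma$'s reassemble into $\gamma_{_{\!\times}}(\brh_{1s}^{(2)})\jmath_{\nu}(a)$ and $\beta$ receives the full $\ov{\bh}_{1s}^{(3)}$. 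The trailing $\jmath_{\nu}(a)$ is carried along through the collapse of Lemma~\ref{auxiliar 6} and, grouped as $\gamma(1^{(1)})\jmath_{\nu}(a)\gamma(S(1^{(2)}))$, reduces to $\jmath_{\nu}(a)$ by Proposition~\ref{auxiliar3}. In part~(2) I would use the second identity of Proposition~\ref{gama iota y gama gama} to expand $\gamma(h_i^{(2)})\gamma(h_{i+1}^{(2)})=\jmath_{\nu}(f(h_i^{(2)}\ot_k h_{i+1}^{(2)}))\gamma(h_i^{(3)}h_{i+1}^{(3)})$; after $\Theta$ the cocycle factor recombines (again by Proposition~\ref{gama iota y gama gama}) so that the product is genuinely $\gamma_{_{\!\times}}(\brh_{1s}^{(2)})$ while the two legs $h_i,h_{i+1}$ multiply into a single class $\ov{h_ih_{i+1}}$ inside $\beta$; then Lemma~\ref{auxiliar 6} with $z=1$ and the two final moves conclude.

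The step I expect to be the real obstacle is the Sweedler-index bookkeeping in parts~(1) and~(2): each $h_j$ must be split into the correct number of legs---one for $\gamma_{_{\!\times}}^{-1}$, one or two for the intermediate rewriting, one for the $\gamma$ emitted by $\Theta$, and one for $\beta$---and one must check that after the recombinations the legs feeding $\gamma_{_{\!\times}}^{-1}(\brh_{1s}^{(1)})$, $\gamma_{_{\!\times}}(\brh_{1s}^{(2)})$ and $\beta(\ov{\bh}_{1s}^{(3)})$ are exactly the three consecutive legs required by Lemma~\ref{auxiliar 6}, and that the merging of the $i,i+1$ legs in part~(2) commutes with that application (it does, since the merge only touches the $\ov{H}$-component $(3)$, which Lemma~\ref{auxiliar 6} leaves in the slot $\ov{\bh}_{1s}\xcdot 1^{(2)}$). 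Once this alignment is verified, each identity is a short chain of equalities identical in shape to the three displays in the proof of Lemma~\ref{auxiliar 6'}.
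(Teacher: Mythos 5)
Your proposal is correct and follows essentially the same route as the paper: the paper's stated proof is to mimic Lemma~\ref{auxiliar 6'}, and your chain of moves (rewrite via Proposition~\ref{gama iota y gama gama}, evaluate $\Theta$, collapse with Lemma~\ref{auxiliar 6}, strip the trailing $1^{(2)}$ by the right $H^{\!L}$-linearity of $\beta$, and absorb $\gamma(1^{(1)})\gamma(S(1^{(2)}))$ via Proposition~\ref{auxiliar3}) is exactly that argument transposed to the cohomological side. The only omission is the trivial $s=0$ base case of items (1) and (3), which the paper handles by Remark~\ref{casos part} (and Proposition~\ref{S y Pi} for item (3)).
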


\begin{proof} Mimic the proof of Lemma~\ref{auxiliar 6'}.
\end{proof}

\begin{notation} For each $k$-subalgebra $R$ of $A$ and $0\!\le\! u\!\le\! r$, we set $\ov{X}_u^{rs}(R,M)\coloneqq \Lambda\bigl(\wh{X}_u^{rs}(R,M)\bigr)$\index{xxu@$\ov{X}_u^{rs}(R,M)$|dotfillboldidx}, where $\wh{X}_u^{rs}(R,M)$ is as in \cite{GGV2}*{Notation~4.4}
\end{notation}

\begin{theorem}\label{ppalchom2} Let $\byy \coloneqq \ov{\bh}_{1s}\ot_k \ov{\ba}_{1r}$, where $h_1,\dots,h_s\in H$ and $a_1,\dots,a_r\in A$. The following assertions hold:

\begin{enumerate}[itemsep=0.7ex, topsep=1.0ex, label=\emph{(\arabic*)}]

\item For $r\ge 1$ and $s\ge 0$, we have
$$
\qquad\quad\,\,\,\, d_0(\beta)(\byy)\! =\! \jmath_{\nu}(a_1)\beta\bigl(\ov{\bh}_{1s}\ot_k \ov{\ba}_{2r}\bigr)+\sum_{i=1}^{r-1} (-1)^i \beta\bigl(\ov{\bh}_{1s}\ot_k (\ov {\ba}_{1,i-1}\ot \ov{a_ia_{i+1}}\ot \ov {\ba}_{i+2,r})\bigr) + \beta\bigl(\ov{\bh}_{1s} \ot_k \ov{\ba}_{1,r-1}\bigr)\xcdot\jmath_{\nu}(a_r).
$$

\item For $r\ge 0$ and $s = 1$, we have
\begin{align}
\qquad\quad\,\,\,\ov{d}^1(\beta)(\byy) &= (-1)^r\Bigl(\gamma(\Pi^R(h_1))\xcdot\beta\bigl(\ov{\ba}_{1r}\bigr)- \gamma^{-1}(h_1^{(1)})\xcdot \beta\bigl(h_1^{(2)}\xcdot \ov{\ba}_{1r}\bigr)\xcdot \gamma(h_1^{(3)})\Bigr),\label{formula alternativac}\\
\shortintertext{while for $r\ge 0$ and $s> 1$, we have}
\ov{d}_1(\beta)(\byy) & = (-1)^r \beta\bigl((\ov{\Pi}^R(h_1)h_2 \ot_{H^{\!L}} \ov{\bh}_{3s})\ot_k \ov{\ba}_{1r}\bigr)\notag\\
& + \sum_{i=1}^{s-1} (-1)^{r+i} \beta\bigl((\ov{\bh}_{1,i-1}\ot_{H^{\!L}} \ov{h_ih_{i+1}} \ot_{H^{\!L}} \ov{\bh}_{i+2,s})\ot_k\ov{\ba}_{1r}\bigr)\notag\\
& + (-1)^{r+s} \gamma^{-1}(h_s^{(1)})\xcdot \beta\bigl(\ov{\bh}_{1,s-1}\ot_k h_s^{(2)}\xcdot \ov{\ba}_{1r}\bigr) \xcdot \gamma(h_s^{(3)}).\notag
\end{align}

\item For $r\ge 0$ and $s\ge 2$, we have
\begin{align*}
\quad\qquad\,\,\, \ov{d}_2(\beta)(\byy) & = - \gamma^{-1}(h_s^{(1)})\gamma^{-1}(h_{s-1}^{(1)})\xcdot \beta \bigl(\ov{\bh}_{1,s-2}\ot_k \mathfrak{T}(h^{(2)}_{s-1},h^{(2)}_s,\ov{\ba}_{1r})\bigr)\xcdot \gamma(h_{s-1}^{(3)}h_s^{(3)}),
\end{align*}
where $\mathfrak{T}(h_{s-1},h_s,\ov{\ba}_{1r})$ is as in Theorem~\ref{ppalhom1}(3).

\item Let $R$ be a $k$-subalgebra of $A$. If $R$ is stable under $\rho$ and $f$ takes its values in $R$, then
$$
\quad\qquad\,\,\,\ov{d}_l\bigl(\ov{X}^{r+l-1,s-l}(M)\bigr)\subseteq\ov{X}_{l-1}^{rs}(R,M),
$$
for each $r\ge 0$ and $1< l\le s$.

\end{enumerate}

\end{theorem}

\begin{proof} Mimic the proof of Theorem~\ref{ppalhom1}.
\end{proof}

\begin{remark}\label{para el calculo de ssc} By the second equality in~\eqref{pepe}, we have
$$
\bigl(\beta\xcdot \ov{\Pi}^R(h)\bigr)(\ov{\ba}_{1r})  = \gamma(\Pi^R(h_1)\xcdot\beta\bigl(\ov{\ba}_{1r}\bigr)\qquad\text{for all $h\in H$, $\beta\in \Hom_{K^e}\bigl(\ov{A}^{\ot^r},M \bigr)$ and $a_1,\dots,a_r\in A$.}
$$
This gives an alternative formula for $\ov{d}^1(\beta)(\byy)$ in~\eqref{formula alternativac}. We will use this fact in the proof of Proposition~\ref{calculo de ssc}.
\end{remark}

\begin{proposition} For each $h\in H$ the map $F^*_h\colon \bigl(\Hom_{K^e}\bigl(\ov{A}^{\ot^r},M\bigr),b^*\bigr)\longrightarrow \bigl(\Hom_{K^e}\bigl(\ov{A}^{\ot^r},M\bigr),b^*\bigr)$, defined by
$$
F^r_h(\beta)\bigl(\ov {\ba}_{1r}\bigr)\coloneqq \gamma^{-1}(h^{(1)})\xcdot \beta\bigl(h^{(2)} \xcdot \ov {\ba}_{1r}\bigr)\xcdot \gamma(h^{(3)}),
$$
is a morphism of complexes. Moreover the following facts hold

\begin{enumerate}[itemsep=0.7ex, topsep=1.0ex, label=\emph{(\arabic*)}]

\item  For each $h,l\in H$, the endomorphisms of $\Ho_{\hs K}^*(A,M)$ induced by $F^*_l\xcirc F^*_h$ and $F^*_{hl}$ coincide.

\item $F^r_l(\beta)(\ov{\ba}_{1r}) = (\beta\xcdot l)\bigl(\ov{\ba}_{1r}\bigr)$, for all $l\in H^{\!L}$ (see Remark~\ref{pepitito3}). In particular $F_1^*$ is the identity map.

\end{enumerate}
Consequently $\Ho_{\hs K}^*(A,M)$ is a right $H$-module.
\end{proposition}

\begin{proof} Mimic the proof of Proposition~\ref{F^h}.
\end{proof}

\begin{example}\label{A es K cohomologia} If $A=K$, then $\Ho_K^*(E,M) = \Ho^*\bigl(H,M^{\!K})$, where $M^{\!K} $ is considered as a right $H$-module via $m\xcdot h\coloneqq \gamma(h^{(2)})\xcdot m\xcdot \gamma^{-1}(h^{(1)})$ (for the well definition use the first identity in~\eqref{gama iota y gama gama} and Lemma~\ref{propiedad 4}(4)).
\end{example}

\begin{proposition}\label{calculo de ssc} The spectral sequence of~\cite{GGV2}*{(4.3)} satisfies
$$
E_1^{rs}=\Hom_{\!H^{\!L}}\bigl(\ov{H}^{\ot_{\!H^{\!L}}^s},\Ho_{\hs K}^r(A,M)\bigr)\qquad\text{and}\qquad E_2^{rs}=\Ho^s\bigl(H,\Ho_{\hs K}^r(A,M)\bigr).
$$
\end{proposition}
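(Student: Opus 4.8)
The plan is to mimic the proof of the homological analogue, transporting the computation through the filtration~\eqref{ss2}. By Remark~\ref{las ss en coh coincide} the spectral sequence of \cite{GGV2}*{(4.3)} is the one associated with the decreasing filtration $F_\bullet(\ov{X}^*(M))$ of~\eqref{ss2}, whose filtration degree is the $H$-index $s$. Inspecting the differential $\ov{d}^*$ summand by summand, each $\ov{d}_l$ sends $\ov{X}^{r+l-1,s-l}(M)$ to $\ov{X}^{rs}(M)$ and thus raises $s$ by exactly $l$; hence only $\ov{d}_0$ preserves the associated graded, while $\ov{d}_1$ raises the filtration degree by $1$ and the $\ov{d}_l$ with $l\ge 2$ raise it by $l$. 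Consequently the $E_0$-differential is $\ov{d}_0$ and the $E_1$-differential is induced by $\ov{d}_1$.

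For the $E_1$-term I would use the identification of Remark~\ref{pepitito3}, namely $\ov{X}^{rs}(M)\simeq \Hom_{H^{\!L}}\bigl(\ov{H}^{\ot_{H^{\!L}}^s},\Hom_{K^e}(\ov{A}^{\ot^r},M)\bigr)$. Under it, Theorem~\ref{ppalchom2}(1) shows that for each fixed $\ov{\bh}_{1s}$ the operator $\ov{d}_0$ is precisely the Hochschild coboundary $b^*$ of the $K$-algebra $A$ with coefficients in the $A$-bimodule $M$. Since $H^{\!L}$ is separable (Proposition~\ref{conmut1}), the right $H^{\!L}$-module $\ov{H}^{\ot_{H^{\!L}}^s}$ is projective (as recorded in the proof of Proposition~\ref{res hom}), so the functor $\Hom_{H^{\!L}}\bigl(\ov{H}^{\ot_{H^{\!L}}^s},-\bigr)$ is exact and commutes with taking cohomology in the $r$-direction. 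Therefore $E_1^{rs}=\Hom_{H^{\!L}}\bigl(\ov{H}^{\ot_{H^{\!L}}^s},\Ho_{\hs K}^r(A,M)\bigr)$, which is the first asserted formula.

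For the $E_2$-term I would compare the differential induced by $\ov{d}_1$ on $E_1^{rs}$ with the cochain complex of Proposition~\ref{cohom} that computes $\Ho^s\bigl(H,\Ho_{\hs K}^r(A,M)\bigr)$, where $\Ho_{\hs K}^r(A,M)$ carries the right $H$-module structure of Proposition~\ref{es un modulo'} (given by the maps $F^*_h$). Reading off Theorem~\ref{ppalchom2}(2), the first two groups of summands of $\ov{d}_1$ coincide termwise (up to the global Koszul sign $(-1)^r$, which is harmless on homology) with the first two groups of summands of the coboundary $d^s$ of Proposition~\ref{cohom}, while the last summand $\gamma^{-1}(h_s^{(1)})\xcdot\beta(\ov{\bh}_{1,s-1}\ot_k h_s^{(2)}\xcdot\ov{\ba}_{1r})\xcdot\gamma(h_s^{(3)})$ is, by the very definition of $F^r_{h_s}$, the map $F^*_{h_s}$ applied in the $A$-slots and hence descends on $\Ho_{\hs K}^r(A,M)$ to the module action $(-)\xcdot h_s$. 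This identifies the $E_1$-differential with the coboundary of Proposition~\ref{cohom}, giving $E_2^{rs}=\Ho^s\bigl(H,\Ho_{\hs K}^r(A,M)\bigr)$.

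The main obstacle is this last comparison: one must verify carefully that the conjugation $\gamma^{-1}(h_s^{(1)})\xcdot(-)\xcdot\gamma(h_s^{(3)})$ occurring in $\ov{d}_1$ is well defined and passes to cohomology exactly as the $F^*_{h_s}$-action, which is precisely the content that makes $\Ho_{\hs K}^r(A,M)$ a right $H$-module (Proposition~\ref{es un modulo'}), and that the $H^{\!L}$-balancing relations and the signs $(-1)^{r+i}$ match those of Proposition~\ref{cohom} slot by slot. The remaining points—that the $\ov{d}_l$ with $l\ge 2$ strictly raise the filtration and so contribute nothing to the first two pages—are routine once the bidegree shifts above are recorded.
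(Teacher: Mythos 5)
Your proposal is correct and follows exactly the route the paper takes: its proof is the one-line citation of Remark~\ref{las ss en coh coincide}, Proposition~\ref{es un modulo'} and Theorem~\ref{ppalchom2}, which are precisely the three ingredients you expand (filtration degree bookkeeping, identification of $\ov{d}_0$ with the Hochschild coboundary plus exactness of $\Hom_{H^{\!L}}(\ov{H}^{\ot_{H^{\!L}}^s},-)$ from separability of $H^{\!L}$, and matching of $\ov{d}_1$ with the complex of Proposition~\ref{cohom} via the $H$-module structure of Proposition~\ref{es un modulo'}). No changes needed.
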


\begin{proof} For each $s,n\ge 0$, let $F_i(\ov{X}^n(M))\coloneqq \bigoplus_{s=i}^n \ov{X}^{n-s,s}(M)$\index{fza@$F_i(\ov{X}^n(M))$|dotfillboldidx}. The cochain complex $(\ov{X}^*(M),\ov{d}^*)$ is filtrated by
\begin{equation}\label{ss2}
F_0(\ov{X}^*(M))\supseteq F_1(\ov{X}^*(M))\supseteq F_2(\ov{X}^*(M))\supseteq F_2(\ov{X}^*(M))\supseteq \dots.
\end{equation}
Since the isomorphism $\Theta^*\colon (\ov{X}^*(M),\ov{d}^*)\to (\wh{X}^*(M),\wh{d}^*)$ preserves filtrations, the spectral sequence of \cite{GGV2}*{(4.3)} coincides with the one determined by the filtration~\eqref{ss2}. Thus, the formula for $E_1^{rs}$ follows from Theorem~\ref{ppalchom2}(1) and the formula for $E_2^{rs}$ follows from Theorem~\ref{ppalchom2}(2), Remark~\ref{para el calculo de ssc} and Proposition~\ref{hom}(2).
\end{proof}

\section{The cup and cap products for cleft extensions}
Let $H$, $A$, $\rho$, $f$, $E$, $K$, $M$, $\nu$, $\gamma$ and $\gamma^{-1}$ be as in Section~\ref{Hochschild homology of cleft extensions}. Thus $H$ is a weak Hopf algebra, $A$ is a weak module algebra and $f$ is convolution invertible. In this section we obtain formulas involving the complexes $(\ov{X}^*(E),\ov{d}^*)$ and $(\ov{X}_*(M),\ov{d}_*)$ that induce the cup product of $\HH_{\hs K}^*(E)$ and the cap product of $\Ho^{\hs K}_*(E,M)$. We will use freely the operators $\bullet$ and $\diamond$ introduced in \cite{GGV2}*{Section~5}.

\begin{notation} Given $h_1,\dots,h_s\in H$ and $a_1,\dots, a_r\in A$ we set $\brh_{1s}\xcdot \ov{\ba}_{1r}\coloneqq  h_1\xcdot (h_2\xcdot (\dots \xcdot(h_s\xcdot \ov{\ba}_{1r}))\dots)$\index{hlll@$\bh_{1s}\cdot \ov{\ba}_{1r}$|dotfillboldidx}.
\end{notation}

\begin{definition}\label{producto star} Let $\beta\in \ov{X}^{rs}(E)$ and $\beta'\in \ov{X}^{r's'}(E)$. We define $\beta\centerdot \beta'\in \ov{X}^{r'',s''}(E)$ by
$$
(\beta\centerdot\beta')(\byy)\coloneqq (-1)^{r's}\gamma_{_{\!\times}}^{-1}(\brh_{s+1,s''}^{(1)})\beta\bigl(\ov{\bh}_{1s}^{(1)}\ot_k \brh_{s+1,s''}^{(2)}\xcdot\ov{\ba}_{1r}\bigr)\gamma_{_{\!\times}}(\brh_{s+1,s''}^{(3)})\beta'\bigl(\ov{\bh}_{s+1,s''}^{(4)}\ot_k\ov{\ba}_{r+1,r''} \bigr),
\index{zzmb@$\centerdot$|dotfillboldidx}
$$
where $r'' \coloneqq r+r'$, $s'' \coloneqq s+s'$, $h_1,\dots,h_{s''}\in H$, $a_1,\dots, a_{r''}\in A$ and $\byy\coloneqq \ov{\bh}_{1s''}\ot_k \ov{\ba}_{1r''}$.
\end{definition}

\begin{proposition}\label{cor cup product'} If $f$ takes its values in $K$, then the cup product in $\HH_{\hs K}^*(E)$ is induced by  $\centerdot$.
\end{proposition}

\begin{proof} By~\cite{GGV2}*{Corollary~5.3} it suffices to prove that
$$
\Theta \bigl(\beta\centerdot \beta'\bigr) = \Theta(\beta)\bullet \Theta(\beta')\qquad\text{for each $\beta\in\ov{X}^{rs}(E)$ and $\beta'\in\ov{X}^{r's'}(E)$.}
$$
For this, take $\bx\!\coloneqq\! \wt{\gamma}_{\hs A}\bigl(\brh_{1s''})\ot \ov{\ba}_{1,r''}$, where $r''\!\coloneqq\! r+r'$, $s''\!\coloneqq\! s+s'$, $h_1,\dots,h_{s''}\!\in\! H$ and $a_1,\dots, a_{r''}\!\in\! A$. By Lemma~\ref{auxiliar 5} and the definitions of $\Theta$, $\bullet$ and $\centerdot$,
\begin{align*}
\bigl(\Theta(\beta&) \bullet \Theta(\beta')\bigr)(\bx) = (-1)^{rs'} \Theta(\beta)\bigl(\wt{\gamma}_{\hs A}\bigl(\brh_{1s})\ot\brh_{s+1,s''}^{(1)} \xcdot \ov{\ba}_{1r}\bigr)\Theta(\beta')\bigl(\wt{\gamma}_{\hs A}\bigl(\brh_{s+1,s''}^{(2)}) \ot \ov{\ba}_{r+1,r''}\bigr)\\
& = (-1)^{rs'+rs+r's'} \gamma_{_{\!\times}}(\brh_{1s}^{(1)})\beta\bigl(\ov{\bh}_{1s}^{(2)} \ot_k \brh_{s+1,s''}^{(1)}\xcdot\ov{\ba}_{1r}\bigr) \gamma_{_{\!\times}}(\brh_{s+1,s''}^{(2)})\beta\bigl(\ov{\bh}_{s+1,s''}^{(3)}\ot_k\ov{\ba}_{r+1,r''}\bigr)\\
& = (-1)^{r''s''}\gamma_{_{\!\times}}(\brh_{1s''}^{(1)})\bigl(\beta\centerdot\beta'\bigr)\bigl(\bh_{1s''}^{(2)} \ot_k \ov{\ba}_{1r''} \bigr)\\
& = \Theta \bigl(\beta\centerdot \beta'\bigr)(\bx),
\end{align*}
as desired.
\end{proof}

\begin{definition}\label{producto star cap} Let $\beta\in \ov{X}^{r's'}(E)$ and let $\byy\coloneqq \ov{\bh}_{1s}\ot_{H^{\!L}} [m\ot \ov{\ba}_{1r}]\in \ov{X}_{rs}(M)$, where $m\in M$, $a_1,\dots,a_r\in A$ and $h_1,\dots,h_s\in H$. Assume that $r\ge r'$ and $s\ge s'$. We define $\byy\ast \beta$ by
$$
\byy\ast \beta\coloneqq (-1)^{rs'+r's'} \ov{\bh}_{s'+1,s}^{(4)} \ot_{H^{\!L}}\Bigl[m\xcdot \gamma_{_{\!\times}}^{-1}(\brh_{s'+1,s}^{(1)}) \beta\bigl(\ov{\bh}_{1s'}^{(1)}\ot_k \brh_{s'+1,s}^{(2)}\xcdot \ov{\ba}_{1r'} \bigr)\gamma_{_{\!\times}}(\brh_{s'+1,s}^{(3)}) \ot
\ov{\ba}_{r'+1,r}\Bigr].\index{zzmc@$\ast$|dotfillboldidx}
$$
If $r'>r$ or $s'>s$, then we set $\byy\ast \beta = 0$.
\end{definition}

\begin{proposition}\label{cap product caso simple'} If $f$ takes its values in $K $, then in terms of the complexes $(\ov{X}_*(M),\ov{d}_*)$ and $(\ov{X}^*(E),\ov{d}^*)$, the cap product is induced by the operation $\ast$.
\end{proposition}

\begin{proof} By~\cite{GGV2}*{Corollary~5.7} it suffices to prove that
$$
\Lambda\bigl(\byy\ast \beta\bigr) = \Lambda(\byy)\diamond \Theta(\beta) \qquad\text{for each $\byy\in \ov{X}_{rs}(M)$ and $\beta\in\ov{X}^{r's'}(E)$.}
$$
If $r'>r$ or $s'>s$, then both sides of this equality are zero. Then we can assume that $r'\le r$, $s'\le s$ and $\byy$ is as in Definition~\ref{producto star cap}. We have
\begin{align*}
& \Lambda(\byy)\diamond \Theta(\beta) = (-1)^{rs}\bigl[m\xcdot\gamma_{_{\!\times}}^{-1}(\mathrm{h}_{1s}^{(1)})\ot_{\hs A} \wt{\gamma}_{\hs A} (\brh_{1s}^{(2)}) \ot \ov{\ba}_{1r}\bigr] \diamond \Theta(\beta)\\
& = (-1)^{rs+r'(s-s')}\bigl[m\xcdot \gamma_{_{\!\times}}^{-1}(\brh_{1s}^{(1)})\Theta(\beta)\bigl(\wt{\gamma}_{\hs A} (\brh_{1s'}^{(2)})\ot \brh_{s'+1,s}^{(2)}\xcdot \ov{\ba}_{1r'}\bigr) \ot_{\hs A}\wt{\gamma}_{\hs A}(\brh_{s'+1,s}^{(3)})\ot\ov{\ba}_{r'+1,r}\bigr]\\
& = (-1)^{rs+r's}\bigl[m\xcdot \gamma_{_{\!\times}}^{-1}(\brh_{1s}^{(1)})\gamma_{_{\!\times}}(\brh_{1s'}^{(2)}) \beta\bigl(\ov{\bh}_{1s'}^{(3)} \ot_k \brh_{s'+1,s}^{(2)}\xcdot \ov{\ba}_{1r'}\bigr) \ot_{\hs A}\wt{\gamma}_{\hs A}(\brh_{s'+1,s}^{(3)})\ot\ov{\ba}_{r'+1,r}\bigr]\\
& = (-1)^{rs+r's}\bigl[m\xcdot \gamma_{_{\!\times}}^{-1}(\brh_{s'+1,s}^{(1)})\gamma(1^{(1)})\beta\bigl(\ov{\bh}_{1s'}^{(1)}\xcdot 1^{(2)}\ot_k \brh_{s'+1,s}^{(2)}\xcdot \ov{\ba}_{1r'}\bigr) \ot_{\hs A}\wt{\gamma}_{\hs A}(\brh_{s'+1,s}^{(3)})\ot\ov{\ba}_{r'+1,r}\bigr]\\
& = (-1)^{rs+r's}\bigl[m\xcdot \gamma_{_{\!\times}}^{-1}(\brh_{s'+1,s}^{(1)})\beta\bigl(\ov{\bh}_{1s'}^{(1)}\ot_k \brh_{s'+1,s}^{(2)}\xcdot \ov{\ba}_{1r'}\bigr) \ot_{\hs A}\wt{\gamma}_{\hs A} (\brh_{s'+1,s}^{(3)})\ot\ov{\ba}_{r'+1,r}\bigr]\\
& = (-1)^{rs'+r's'}\Lambda\Bigl(\ov{\bh}_{s'+1,s}^{(4)}\ot_{H^{\!L}}\Bigl[m\xcdot\gamma_{_{\!\times}}^{-1}(\brh_{s'+1,s}^{(1)})\beta\bigl( \ov{\bh}_{1s'}^{(1)}\ot_k \!\brh_{s'+1,s}^{(2)}\xcdot \ov{\ba}_{1r'}\bigr)\gamma_{_{\!\times}}(\brh_{s'+1,s}^{(3)})\!\ot \ov{\ba}_{r'+1,r}\Bigr]\! \Bigr)\\
& = \Lambda(\byy\ast \beta),
\end{align*}
as desired. Here the first equality holds by the definition of $\Lambda$; the second one, by the definition of $\diamond$; the third one, by the definition of $\Theta(\beta)$; the fourth one, by Lemma~\ref{auxiliar 6}; the fifth one, since $\beta$ is a right $H^{\!L}$-module morphism and $\gamma(1^{(1)})\gamma(S(1^{(2)}))=1_E$; the fifth one, by the definition of $\Lambda$ and Lemma~\ref{auxiliar 5}; and the last one, by the definition of $\ast$.
\end{proof}

\section{Cyclic homology of cleft extensions}\label{cyclic homology of cleft extensions}
Let $H$, $A$, $\rho$, $f$, $E$, $K$, $M$, $\nu$, $\gamma$ and $\gamma^{-1}$ be as in Section~\ref{Hochschild homology of cleft extensions}. Thus $H$ is a weak Hopf algebra, $A$ is a weak module algebra and $f$ is convolution invertible. In this section we prove that the mixed complex $(\wh{X}_*(E),\wh{d}_*,\wh{D}_*)$\index{dh@$\wh{D}_*$|dotfillboldidx} of \cite{GGV2}*{Section~6} is isomorphic to a simpler mixed complex $(\ov{X}_*(E),\ov{d}_*,\ov{D}_*)$. Let
$$
\Theta_*\colon (\wh{X}_*(E),\wh{d}_*)\longrightarrow (\ov{X}_*(E),\ov{d}_*)\quad\text{and}\quad \Lambda_*\colon (\ov{X}_*(E),\ov{d}_*) \longrightarrow (\wh{X}_*(E),\wh{d}_*),
$$
be the maps introduced in the proof of Theorem~\ref{ppalhom}. For each $n\!\ge\! 0$, let $\ov{D}_n\colon \ov{X}_n(E)\to \ov{X}_{n+1}(E)$ be the map defined by $\ov{D}_n\coloneqq \Theta_{n+1}\xcirc \wh{D}_n\xcirc \Lambda_n$.\index{di@$\ov{D}_n$|dotfillboldidx}

\begin{theorem}\label{complejo mezclado que da la homologia ciclica caso cleft} The triple $\bigl(\ov{X}_*(E),\ov{d}_*,\ov{D}_*\bigr)$ is a mixed complex that gives the Hochschild and that cyclic type~ho\-mol\-ogies of the $K$-algebra $E$. More precisely, the mixed complexes $\bigl(\ov{X}_*(E),\ov{d}_*,\ov{D}_*\bigr)$ and $\cramped{\bigl(E\ot \ov{E}^{\ot^*},b_*,B_*\bigr)}$ are homotopically equivalent.
\end{theorem}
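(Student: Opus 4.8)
The plan is to transport the mixed complex structure from $(\wh{X}_*(E),\wh{d}_*,\wh{D}_*)$ to $(\ov{X}_*(E),\ov{d}_*)$ along the isomorphism $\Theta_*$, and to observe that everything required follows formally from two facts. The first, established in the proof of Theorem~\ref{ppalhom} applied with $M\coloneqq E$, is that $\Theta_*$ and $\Lambda_*$ are mutually inverse isomorphisms of chain complexes between $(\wh{X}_*(E),\wh{d}_*)$ and $(\ov{X}_*(E),\ov{d}_*)$; in particular each is a chain map and $\Lambda_n\xcirc \Theta_n = \ide = \Theta_n\xcirc \Lambda_n$. The second, taken from \cite{GGV2}*{Section~6}, is that $(\wh{X}_*(E),\wh{d}_*,\wh{D}_*)$ is a mixed complex which is homotopically equivalent, as a mixed complex, to the canonical mixed complex $\bigl(E\ot \ov{E}^{\ot^*},b_*,B_*\bigr)$ of the $K$-algebra $E$.

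First I would verify the three mixed complex axioms for $(\ov{X}_*(E),\ov{d}_*,\ov{D}_*)$. The identity $\ov{d}_*\xcirc \ov{d}_* = 0$ is already known, since $(\ov{X}_*(E),\ov{d}_*)$ is a chain complex by Theorem~\ref{ppalhom}. For $\ov{D}_*\xcirc \ov{D}_* = 0$, inserting $\Lambda_{n+1}\xcirc \Theta_{n+1} = \ide$ between the two copies of $\wh{D}$ gives $\ov{D}_{n+1}\xcirc \ov{D}_n = \Theta_{n+2}\xcirc \wh{D}_{n+1}\xcirc \wh{D}_n\xcirc \Lambda_n = 0$, because $\wh{D}_{n+1}\xcirc \wh{D}_n = 0$. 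For the compatibility, I would use the chain map relations $\Lambda_{n-1}\xcirc \ov{d}_n = \wh{d}_n\xcirc \Lambda_n$ and $\ov{d}_{n+1}\xcirc \Theta_{n+1} = \Theta_n\xcirc \wh{d}_{n+1}$ to commute $\Theta_*$ and $\Lambda_*$ past $\ov{d}_*$, obtaining $\ov{D}_{n-1}\xcirc \ov{d}_n + \ov{d}_{n+1}\xcirc \ov{D}_n = \Theta_n\xcirc\bigl(\wh{D}_{n-1}\xcirc \wh{d}_n + \wh{d}_{n+1}\xcirc \wh{D}_n\bigr)\xcirc \Lambda_n = 0$. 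Hence $(\ov{X}_*(E),\ov{d}_*,\ov{D}_*)$ is a mixed complex.

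Next I would observe that, with this definition of $\ov{D}_*$, the chain isomorphism $\Theta_*$ is automatically an isomorphism of mixed complexes. Indeed $\ov{D}_n\xcirc \Theta_n = \Theta_{n+1}\xcirc \wh{D}_n\xcirc \Lambda_n\xcirc \Theta_n = \Theta_{n+1}\xcirc \wh{D}_n$ since $\Lambda_n\xcirc \Theta_n = \ide$, so $\Theta_*$ commutes with both $b$ and $B$; the symmetric computation using $\Theta_n\xcirc \Lambda_n = \ide$ shows that $\Lambda_*$ is its inverse in the category of mixed complexes. Thus $(\wh{X}_*(E),\wh{d}_*,\wh{D}_*)$ and $(\ov{X}_*(E),\ov{d}_*,\ov{D}_*)$ are isomorphic, and in particular homotopically equivalent, as mixed complexes. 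I would then conclude by transitivity of homotopy equivalence: composing $\Theta_*$ with the homotopy equivalence $(\wh{X}_*(E),\wh{d}_*,\wh{D}_*)\simeq \bigl(E\ot \ov{E}^{\ot^*},b_*,B_*\bigr)$ of \cite{GGV2}*{Section~6} exhibits $(\ov{X}_*(E),\ov{d}_*,\ov{D}_*)$ as homotopically equivalent to the canonical mixed complex of $E$, whence the two triples have the same Hochschild, cyclic, negative and periodic homologies.

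There is no genuine computational obstacle here: the entire argument is the transport of a Connes operator along a chain isomorphism. The only point that needs care, and the one I would emphasise, is that the compatibility $B\xcirc b + b\xcirc B = 0$ transports correctly; this step uses crucially that $\Theta_*$ and $\Lambda_*$ intertwine $\ov{d}_*$ and $\wh{d}_*$, and not merely that they are graded isomorphisms. All the substantive work is thus already contained in Theorem~\ref{ppalhom} (i.e. in Proposition~\ref{son inversos} establishing that $\Theta_*$ and $\Lambda_*$ are inverse chain maps) and in the cited results of \cite{GGV2}.
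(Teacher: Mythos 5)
Your argument is correct and is exactly the paper's proof, merely with the transport-of-structure computations written out explicitly: the paper likewise observes that since $\Theta_*$ and $\Lambda_*$ are mutually inverse, $(\ov{X}_*(E),\ov{d}_*,\ov{D}_*)$ is a mixed complex and $\Theta_*$ is an isomorphism of mixed complexes, and then invokes \cite{GGV2}*{Theorem~6.3}. No gap; your extra detail on the compatibility $B\xcirc b + b\xcirc B = 0$ is the only step the paper leaves implicit.
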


\begin{proof} Since $\Theta_*$ and $\Lambda_*$ are inverse one of each other it is clear that $\bigl(\ov{X}_*(E),\ov{d}_*,\ov{D}_*\bigr)$ is a mixed complex and that $\Theta_*\colon \bigl(\wh{X}_*(E),\wh{d}_*,\wh{D}_*\bigr)\to \bigl(\ov{X}_*(E),\ov{d}_*,\ov{D}_*\bigr)$ is an isomorphism of mixed complexes. Consequently the result follows from \cite{GGV2}*{Theorem~6.3}.
\end{proof}

\begin{definition}\label{def ov{D}0, etc} For each $r,s\ge 0$, let $\wh{D}^0_{rs}\colon \wh{X}_{rs}(E)\to \wh{X}_{r,s+1}(E)$\index{dj@$\wh{D}^0_{rs}$|dotfillboldidx} and $\wh{D}^1_{rs}\colon \wh{X}_{rs}(E)\to \wh{X}_{r+1,s}(E)$\index{dk@$\wh{D}^1_{rs}$|dotfillboldidx} be the maps introduced in \cite{GGV2}*{Definition~6.5}. We define
$\ov{D}^0_{rs}\colon \ov{X}_{rs}(E)\to \ov{X}_{r,s+1}(E)$\index{djj@$\ov{D}^0_{rs}$|dotfillboldidx} and $\ov{D}^1_{rs}\colon \ov{X}_{rs}(E)\to\ov{X}_{r+1,s}(E)$\index{dkk@$\ov{D}^1_{rs}$|dotfillboldidx} by $\ov{D}^0_{rs}\coloneqq  \Theta_{r,s+1}\xcirc \wh{D}^0_{rs}\xcirc \Lambda_{rs}$ and $\ov{D}^1_{rs}\coloneqq  \Theta_{r+1,s}\xcirc \wh{D}^1_{rs}\xcirc \Lambda_{rs}$, respectively.
\end{definition}

\begin{proposition}\label{Connes operator'} let $R$ be a subalgebra of $A$ and let $\byy \coloneqq \ov{\bh}_{1s} \ot_{H^{\!L}} [a_0\xcdot \gamma(h_0)\ot\ov{\ba}_{1r}]\in \ov{X}_{rs}(E)$. If $R$ is stable under $\rho$ and $f$ takes its values in $R$, then $\ov{D}(\byy) = \ov{D}^0(\byy) + \ov{D}^1(\byy)$ module $\bigoplus_{s=0}^i \ov{X}^1_{s,n+1-s}(R,M)$.
\end{proposition}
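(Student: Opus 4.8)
The plan is to deduce the statement from the corresponding property of the mixed complex of \cite{GGV2}*{Section~6}, transporting it along the mutually inverse isomorphisms $\Theta_*$ and $\Lambda_*$ of Theorem~\ref{complejo mezclado que da la homologia ciclica caso cleft}. Writing $n\coloneqq r+s$, I would first recall from \cite{GGV2}*{Section~6} that the Connes operator decomposes as $\wh{D}_n=\sum_{l\ge 0}\wh{D}^l$, where $\wh{D}^0_{rs}$ and $\wh{D}^1_{rs}$ are the maps singled out in Definition~\ref{def ov{D}0, etc} and the summands with $l\ge 2$ raise the filtration degree. Since $\ov{D}_n=\Theta_{n+1}\xcirc\wh{D}_n\xcirc\Lambda_n$ while $\ov{D}^0_{rs}=\Theta_{r,s+1}\xcirc\wh{D}^0_{rs}\xcirc\Lambda_{rs}$ and $\ov{D}^1_{rs}=\Theta_{r+1,s}\xcirc\wh{D}^1_{rs}\xcirc\Lambda_{rs}$, applying $\Theta_{n+1}$ componentwise to $\wh{D}_n\bigl(\Lambda_{rs}(\byy)\bigr)$ gives
$$
\ov{D}(\byy)-\ov{D}^0(\byy)-\ov{D}^1(\byy)=\sum_{l\ge 2}\Theta\bigl(\wh{D}^l\bigl(\Lambda_{rs}(\byy)\bigr)\bigr),
$$
so the whole statement reduces to showing that the right-hand side lies in the submodule $\bigoplus\ov{X}^1(R,E)$ of the claim.

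Next I would reduce this to the hat complex. As recorded in the proof of Theorem~\ref{ppalhom1}(4), the isomorphism $\Theta$ carries $\wh{X}^1_{r's'}(R,E)$ onto $\ov{X}^1_{r's'}(R,E)$; hence it is enough to check that $\wh{D}^l\bigl(\Lambda_{rs}(\byy)\bigr)\in\wh{X}^1(R,E)$ for every $l\ge 2$. This is the hat-complex version of the proposition, which I would extract from the explicit description of $\wh{D}$ in \cite{GGV2}*{Section~6}: the filtration-raising components appear exactly when a product $\gamma(h)\gamma(l)$ is rewritten, via Proposition~\ref{gama iota y gama gama}, as $\jmath_{\nu}\bigl(f(h^{(1)}\ot_k l^{(1)})\bigr)\gamma(h^{(2)}l^{(2)})$, so each resulting term acquires a factor $f(\cdots)$. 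Because $f$ is $R$-valued and $R$ is stable under $\rho$, this factor remains an $R$-entry after the reorderings needed to put the term into canonical form, so it belongs to $\wh{X}^1(R,E)$.

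The hard part will be this last point: verifying that the filtration-raising part of the Connes operator is genuinely $R$-valued and that the distinguished $f$-entry is not destroyed by cancellation among the cyclic terms. Should the relevant statement of \cite{GGV2}*{Section~6} not be in a directly usable form, I would instead argue internally, computing on $\byy=\ov{\bh}_{1s}\ot_{H^{\!L}}[a_0\xcdot\gamma(h_0)\ot\ov{\ba}_{1r}]$: evaluate $\Lambda_{rs}(\byy)$, apply the formula for $\wh{D}$, and transport the result back through $\Theta$ by means of Lemmas~\ref{auxiliar 6} and~\ref{auxiliar 6'}, in the same way as in the proofs of Theorem~\ref{ppalhom1} and Lemma~\ref{auxiliar 6'}. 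Comparing with the formulas for $\ov{D}^0$ and $\ov{D}^1$, the leftover terms would each be seen to contain an entry $f(\cdots)\in R$, hence to lie in $\ov{X}^1(R,E)$, which finishes the argument.
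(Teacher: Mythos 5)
Your proposal is correct and follows essentially the same route as the paper: the paper's proof is precisely to invoke the corresponding decomposition result \cite{GGV2}*{Proposition~6.6(1)} together with Remark~\ref{estable bajo rho = estable bajo chi} (which translates stability under $\rho$ into the $\chi_{\rho}$-stability hypothesis used in \cite{GGV2}), and then transport the statement through the isomorphism $\Theta$, exactly as you reduce to the hat complex via $\Theta\bigl(\wh{X}^1(R,\cdot)\bigr)=\ov{X}^1(R,\cdot)$ as in the proof of Theorem~\ref{ppalhom1}(4). Your fallback third paragraph is not needed, since the cited result of \cite{GGV2} is already in directly usable form.
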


\begin{proof} This follows by Remark~\ref{estable bajo rho = estable bajo chi} and \cite{GGV2}*{Proposition~6.6(1)}.
\end{proof}

\begin{corollary} If $f$ takes its values in $K$, then $\ov{D} = \ov{D}^0 + \ov{D}^1$.
\end{corollary}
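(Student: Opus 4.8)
The plan is to deduce this corollary as the special case $R = K$ of Proposition~\ref{Connes operator'}. By the standing hypotheses of this section $K$ is a stable under $\rho$ subalgebra of $A$, and the corollary assumes that $f$ takes its values in $K$; hence Proposition~\ref{Connes operator'} applies verbatim with $R\coloneqq K$. It gives, for each generator $\byy = \ov{\bh}_{1s}\ot_{H^{\!L}}[a_0\xcdot\gamma(h_0)\ot\ov{\ba}_{1r}]$ of $\ov{X}_{rs}(E)$,
$$
\ov{D}(\byy) = \ov{D}^0(\byy)+\ov{D}^1(\byy)\quad\text{modulo}\quad \bigoplus_{r,s}\ov{X}^1_{rs}(K,E).
$$

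The key step is then to observe that the error submodules are trivial, that is $\ov{X}^1_{rs}(K,E) = 0$ for all $r,s$. Indeed, by definition $\ov{X}^1_{rs}(K,E)$ is spanned by the elements $\ov{\bh}_{1s}\ot_{H^{\!L}}[m\ot\ov{\ba}_{1r}]$ in which at least one $a_j$ belongs to $K$. Since the entries of $\ov{\ba}_{1r}$ are taken in $\ov{A}^{\ot^r}$ with $\ov{A} = A/K$, the condition $a_j\in K$ forces $\ov{a_j} = 0$, whence $\ov{\ba}_{1r} = 0$ and the whole generator vanishes. Thus $\ov{X}^1_{rs}(K,E) = 0$, and the congruence above collapses to the exact equality $\ov{D}(\byy) = \ov{D}^0(\byy)+\ov{D}^1(\byy)$.

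It remains to pass from generators to all of $\ov{X}_*(E)$. By Theorem~\ref{weak crossed prod}(9) and equality~\eqref{equacion1} we have $E = \jmath_{\nu}(A)\gamma(H)$, so every element of $E$ is a sum of terms $a_0\xcdot\gamma(h_0)$; consequently the elements $\byy$ appearing in Proposition~\ref{Connes operator'} span $\ov{X}_*(E)$, and by $k$-linearity the identity $\ov{D} = \ov{D}^0+\ov{D}^1$ holds as an equality of operators. There is no substantive obstacle here: all the work is contained in Proposition~\ref{Connes operator'}, and the only point to check is the elementary vanishing of $\ov{X}^1(K,E)$, which follows from the fact that reducing a factor lying in $K$ modulo $K$ kills the corresponding simple tensor in $\ov{A}^{\ot^r}$.
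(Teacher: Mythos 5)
Your proof is correct and is exactly the argument the paper leaves implicit: specialize Proposition~\ref{Connes operator'} to $R=K$ and observe that $\ov{X}^1_{rs}(K,E)=0$ because a tensor factor lying in $K$ dies in $\ov{A}=A/K$. The only cosmetic point is that the error module in Proposition~\ref{Connes operator'} is written with $M$ where $E$ is meant; you read it correctly.
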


We next compute the maps $\ov{D}^0$ and $\ov{D}^1$. We will need the following proposition.

\begin{notation} Given $h_1,\dots,h_s\in S$ and $1\le j\le s$, we set $S_{_{\!\times}}(\brh_{1s}) \coloneqq S(h_s)S(h_{s-1})\cdots S(h_1)\index{sc@$S_{_{\times}}(\brh_{1s})$|dotfillboldidx}$.
\end{notation}

\begin{proposition}\label{priequ'} For each $s\in \mathds{N}$ there exists a map $T_s\colon H^{\ot_k^{s+1}}\to A$ such that
\begin{equation*}
\gamma(h_0)\gamma_{_{\!\times}}^{-1}(\brh_{1s}) = \jmath_{\nu}\bigl(T_s(h_0^{(1)}\!\ot_k\! \brh_{1s}^{(2)})\bigr) \gamma\bigl(h_0^{(2)}S_{_{\!\times}}(\brh_{1s}^{(1)})\bigr)\quad\text{and}\quad T_s(h_0^{(1)}\!\ot_k\! \brh_{1s}^{(2)})\bigl(h_0^{(2)}S_{_{\!\times}}(\brh_{1s}^{(1)})\xcdot 1_A\bigr) = T_s(\brh_{0s}).
\end{equation*}
\end{proposition}

\begin{proof} We will proceed by an inductive argument. Assume first that $s=1$. By equality~\eqref{gamma{-1}}, the first identity in~\eqref{gama iota y gama gama} and Theorem~\ref{weak crossed prod}(d), we have
\begin{align*}
\gamma(h_0)\gamma^{-1}(h_1) & = \gamma(h_0)\jmath_{\nu}\bigl(f^{-1}\bigl(S(h_1^{(2)})\ot_k h_1^{(3)}\bigr)\bigr)\gamma\bigl(S(h_1^{(1)})\bigr)\\
& = \jmath_{\nu}\bigl(h_0^{(1)}\!\cdot\! f^{-1}\bigl(S(h_1^{(2)})\ot_k h_1^{(3)}\bigr)\bigr)\gamma(h_0^{(2)})\gamma\bigl(S(h_1^{(1)})\bigr)\\
& = \jmath_{\nu}\Bigl(\bigl(h_0^{(1)}\!\cdot\! f^{-1}\bigl(S(h_1^{(3)})\ot_k h_1^{(4)}\bigr)\bigr)f\bigl(h_0^{(2)}\ot_k S(h_1^{(2)})\bigr) \Bigr)\gamma\bigl(h_0^{(3)}S(h_1^{(1)})\bigr).
\end{align*}
So, we have $T_1(h_{01}) = \bigl(h_0^{(1)}\!\cdot\! f^{-1}\bigl(S(h_1^{(2)})\ot_k h_1^{(3)}\bigr)\bigr)f\bigl(h_0^{(2)}\ot_k S(h_1^{(1)})\bigr)$. The first equality in the statement follows immediately from the equality in~The\-orem~\ref{weak crossed prod}(1). Assume now that $s\ge 1$ and there exists $T_s$ as in the statement. Then
\begin{align*}
& \gamma(h_0)\gamma_{_{\!\times}}^{-1}(\brh_{1,s+1}) = \jmath_{\nu}\bigl(T_s(h_0^{(1)}\!\ot_k\! \brh_{2,s+1}^{(2)})\bigr)\gamma\bigl(h_0^{(2)} S_{_{\!\times}}(\brh_{2,s+1}^{(1)})\bigr) \jmath_{\nu}\bigl(f^{-1}\bigl(S(h_1^{(2)})\ot_k h_1^{(3)}\bigr)\bigr)\gamma\bigl(S(h_1^{(1)})\bigr)\\
& = \jmath_{\nu}\Bigl(T_s(h_0^{(1)}\!\ot_k\!\brh_{2,s+1}^{(3)}) h_0^{(2)} S_{_{\!\times}}(\brh_{2,s+1}^{(2)})\!\cdot\! f^{-1}\bigl(S(h_1^{(2)})\ot_k h_1^{(3)}\bigr)\Bigr)\gamma\bigl(h_0^{(3)} S_{_{\!\times}}(\brh_{2,s+1}^{(1)})\bigr)\gamma\bigl(S(h_1^{(1)})\bigr)\\
& = \jmath_{\nu}\Bigl(T_s(h_0^{(1)}\!\ot_k\! \brh_{2,s+1}^{(4)}) h_0^{(2)} S_{_{\!\times}}(\brh_{2,s+1}^{(3)})\!\cdot\! f^{-1}\bigl(S(h_1^{(2)})\ot_k h_1^{(3)}\bigr) f\bigl(h_0^{(3)} S_{_{\!\times}}(\brh_{2,s+1}^{(2)})\ot_k S(h_1^{(1)})\bigr)\Bigr)\gamma\bigl(h_0^{(4)} S_{_{\!\times}}(\brh_{1,s+1}^{(1)})\bigr),
\end{align*}
which provides the recursive step. The same argument as above proves the second equality in the statement.
\end{proof}

\begin{remark} By Theorem~\ref{weak crossed prod}(d), Propositions~\ref{modulo algebra debil}(3) and~\ref{priequ'}, and \cite{GGV1}*{Propositions~4.5 and 5.19},
\begin{align*}
\gamma(h_0^{(1)})\gamma_{_{\!\times}}^{-1}(\brh_{1s}^{(2)})\gamma^{-1}\bigl(h_0^{(2)}S_{_{\!\times}}(\brh_{1s}^{(1)})\bigr) & = \jmath_{\nu}\bigl(T_s(h_0^{(1)}\!\ot_k\! \brh_{1s}^{(3)})\bigr) \gamma\bigl(h_0^{(2)}S_{_{\!\times}}(\brh_{1s}^{(2)})\bigr) \gamma^{-1}\bigl(h_0^{(3)}S_{_{\!\times}}(\brh_{1s}^{(1)})\bigr)\\
& = (-1)^{jr+r}\bigl(T_s(h_0^{(1)}\!\ot_k\! \brh_{1s}^{(2)})\bigr) \gamma\bigl(\Pi^{L}\bigl(h_0^{(2)}S_{_{\!\times}}(\brh_{1s}^{(1)})\bigr)\bigr)\\
& = \jmath_{\nu}\bigl(T_s(h_0^{(1)}\!\ot_k\! \brh_{1s}^{(2)})\bigr) \jmath_{\nu}\bigl(h_0^{(2)}S_{_{\!\times}}(\brh_{1s}^{(1)})\xcdot 1_A\bigr)\\
& = \jmath_{\nu}\bigl(T_s(h_0\!\ot_k\! \brh_{1s})\bigr).
\end{align*}
Since $(A\ot \epsilon)\xcirc \jmath_{\nu} = \ide_A$, this implies that
$$
T_s(h_0\!\ot_k\! \brh_{1s}) = (A\ot\epsilon)\Bigl(\gamma(h_0^{(1)})\gamma_{_{\!\times}}^{-1}(\brh_{1s}^{(2)})\gamma^{-1}\bigl(h_0^{(2)} S_{_{\!\times}}(\brh_{1s}^{(1)})\bigr)\Bigr).
$$
\end{remark}

\begin{proposition} For $\byy\coloneqq \ov{\bh}_{1s}\ot_{H^{\!L}} [\jmath_{\nu}(a_0)\gamma(h_0)\ot\ov{\ba}_{1r}]\in \ov{X}_{rs}(E)$, we have
\begin{align*}
\ov{D}^0(\byy) &\!=\! \sum_{j=0}^s \alpha_{jrs} \bigl(\ov{\bh}_{j+1,s}^{(5)} \ot_{H^{\!L}}\! \ov{h_0^{(2)}S_{_{\!\times}}(\brh_{1s}^{(1)})}\ot_{H^{\!L}}\! \bh_{1j}^{(2)}\bigr) \ot_{H^{\!L}}\! \bigl[\gamma_{_{\!\times}}(\brh_{j+1,s}^{(4)})\jmath_{\nu}(a_0)\gamma(h_0^{(1)})\gamma_{_{\!\times}}^{-1} (\brh_{j+1,s}^{(2)})\ot \brh_{j+1,s}^{(3)}\xcdot \ov{\ba}_{1r}\bigr]
%
\shortintertext{and}
\ov{D}^1(\byy) &\!=\!\! \sum_{j=0}^s \beta_{jr} \ov{\bh}_{1s}^{(6)}\! \ot_{H^{\!L}}\!\! \Bigl[\gamma\bigl(h_0^{(3)}S_{_{\!\times}}(\brh_{1s}^{(1)})\bigr) \gamma_{_{\!\times}}(\brh_{1s}^{(5)}) \!\ot\!\ov{\ba}_{j+1,r}\!\ot\!\ov{a_0T_s(h_0^{(1)}\!\ot_k\! \brh_{1s}^{(3)}\bigr)}\!\ot\! \bigl(h_0^{(2)}S_{_{\!\times}}(\brh_{1s}^{(2)})\bigr)\xcdot \bigl(\brh_{1s}^{(4)}\xcdot \ov{\ba}_{1j}\bigr)\Bigr],
\end{align*}
where $\alpha_{jrs}\coloneqq (-1)^{js+r+s}$ and $\beta_{jr}\coloneqq (-1)^{jr+r}$.
\end{proposition}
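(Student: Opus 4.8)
The plan is to compute directly from the definitions $\ov{D}^0_{rs}=\Theta_{r,s+1}\xcirc\wh{D}^0_{rs}\xcirc\Lambda_{rs}$ and $\ov{D}^1_{rs}=\Theta_{r+1,s}\xcirc\wh{D}^1_{rs}\xcirc\Lambda_{rs}$ given in Definition~\ref{def ov{D}0, etc}, pushing $\byy$ through the three maps in turn. First I would apply $\Lambda$. Since here the bimodule entry is $m=\jmath_{\nu}(a_0)\gamma(h_0)$, the formula for $\Lambda'$ in Proposition~\ref{const de aplicaciones} gives
\[
\Lambda(\byy)=(-1)^{rs}\bigl[\jmath_{\nu}(a_0)\gamma(h_0)\gamma_{_{\!\times}}^{-1}(\brh_{1s}^{(1)})\ot_{\hs A}\wt{\gamma}_{\hs A}(\brh_{1s}^{(2)})\ot\ov{\ba}_{1r}\bigr].
\]
I would then rewrite the head $\gamma(h_0)\gamma_{_{\!\times}}^{-1}(\brh_{1s}^{(1)})$ by the first equality in~\eqref{priequ} as $\jmath_{\nu}\bigl(T_s(h_0^{(1)}\ot_k\brh_{1s}^{(2)})\bigr)\gamma\bigl(h_0^{(2)}S_{_{\!\times}}(\brh_{1s}^{(1)})\bigr)$, so that $\Lambda(\byy)$ lands in the standard shape $[\jmath_{\nu}(\cdot)\gamma(\cdot)\ot_{\hs A}\wt{\gamma}_{\hs A}(\cdots)\ot\ov{\ba}_{1r}]$ to which the Connes-type operators of \cite{GGV2}*{Definition~6.5} apply.

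Next I would apply $\wh{D}^0$ and $\wh{D}^1$. Each is a cyclic-permutation operator, so each produces the sum over $j=0,\dots,s$ appearing in the statement: the tensor word $\wt{\gamma}_{\hs A}(\brh_{1s}^{(2)})$ is split after its $j$-th slot, the tail $\wt{\gamma}_{\hs A}(\brh_{j+1,s})$ is brought to the front past a freshly inserted $1_E$, and the head is cycled to the end. The component $\wh{D}^0$ keeps the new leading symbol in the $\wt{E}$-column, whence the target $\ov{X}_{r,s+1}(E)$ and the $s+1$ factors $\ov{\bh}_{j+1,s}\ot_{H^{\!L}}\ov{h_0^{(2)}S_{_{\!\times}}(\brh_{1s}^{(1)})}\ot_{H^{\!L}}\bh_{1j}$; the component $\wh{D}^1$ sends it into the $\ov{A}$-column, whence the target $\ov{X}_{r+1,s}(E)$ and the extra $\ov{A}$-entry.

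Finally I would apply $\Theta$ and simplify, the workhorse being Lemma~\ref{auxiliar 6'} (which in turn rests on Lemma~\ref{auxiliar 6}). For $\ov{D}^0$, applying $\Theta$ to the shifted word falls under the cases of Lemma~\ref{auxiliar 6'}, which collapses the product $\gamma_{_{\!\times}}^{-1}(\brh_{j+1,s}^{(1)})\gamma_{_{\!\times}}(\brh_{j+1,s}^{(2)})$ created by the shift against the inserted unit $\gamma(1^{(1)})\gamma(S(1^{(2)}))=1_E$; the conjugation operators $F^{\brh_{j+1,s}^{(2)}}$ then emerge because the cycled copies $\gamma(h_{j+1}),\dots,\gamma(h_s)$ and their inverses straddle the bimodule element $\jmath_{\nu}(a_0)\gamma(h_0^{(1)})$, and by the definition of $F^h$ in Proposition~\ref{F^h} together with the commutation in the first equality of Proposition~\ref{gama iota y gama gama}, each straddle is one application of $F^{h_i}$. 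For $\ov{D}^1$ one additionally invokes the Remark immediately preceding this proposition, $\gamma(h_0^{(1)})\gamma_{_{\!\times}}^{-1}(\brh_{1s}^{(2)})\gamma^{-1}\bigl(h_0^{(2)}S_{_{\!\times}}(\brh_{1s}^{(1)})\bigr)=\jmath_{\nu}\bigl(T_s(h_0\ot_k\brh_{1s})\bigr)$, which is precisely what produces the $\ov{A}$-entry $\ov{a_0T_s(h_0^{(1)}\ot_k\brh_{1s}^{(3)})}$, while Proposition~\ref{gama iota y gama gama} and Proposition~\ref{auxiliar3} move the surviving $\jmath_{\nu}$- and $\gamma$-factors into their final positions.

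The main obstacle will be the sign and Sweedler-index bookkeeping: one must match the Koszul signs $(-1)^{js+r+s}$ and $(-1)^{jr+r}$ coming from the cyclic summation in $\wh{D}^0$, $\wh{D}^1$ against the signs $(-1)^{rs}$ hidden in $\Theta$ and $\Lambda$, and keep the legs $\brh_{1s}^{(1)},\dots,\brh_{1s}^{(6)}$ of the iterated coproduct correctly aligned through the repeated uses of Proposition~\ref{gama iota y gama gama} and the coassociativity needed to separate $S_{_{\!\times}}$, $T_s$, and the surviving $\gamma_{_{\!\times}}(\brh_{1s}^{(5)})$. Everything else is a routine, if lengthy, application of the structural results of Subsection~\ref{subsection: Crossed products by weak Hopf algebras}.
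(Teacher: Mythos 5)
Your plan is correct and follows essentially the same route as the paper's proof: compute $\Lambda(\byy)$, rewrite its head via the first equality in~\eqref{priequ}, apply the cyclic operators $\wh{D}^0$, $\wh{D}^1$ of \cite{GGV2}*{Definition~6.5}, and push back through $\Theta$ using Lemma~\ref{auxiliar 6}, Propositions~\ref{coaccion sobre gamma^-1}, \ref{calculo de coaccion}, \ref{gama iota y gama gama} and~\ref{auxiliar3}, the conjugations $F^{h_i}$ arising exactly as you describe. The only imprecision is in your description of $\wh{D}^1$: it cyclically permutes the $\ov{A}$-column (so its sum naturally runs over $j=0,\dots,r$, as in the paper's displayed computation), rather than splitting the $\wt{E}$-word, but this does not affect the overall argument.
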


\begin{proof} By the definition of $\Lambda$ and the first equality in Proposition~\ref{priequ'},
\begin{align*}
\Lambda(\byy) & = (-1)^{rs} \bigl[\jmath_{\nu}(a_0)\gamma(h_0)\gamma_{_{\!\times}}^{-1}(\brh_{1s}^{(1)}) \ot_{\hs A} \wt{\gamma}_{\hs A}(\brh_{1s}^{(2)})\ot \ov{\ba}_{1r}\bigr]\\
& = (-1)^{rs} \Bigl[\jmath_{\nu}\bigl(a_0T_s(h_0^{(1)}\!\ot_k\!\brh_{1s}^{(2)}\bigr)\bigr) \gamma\bigl(h_0^{(2)}S_{_{\!\times}}(\brh_{1s}^{(1)})\bigr)\ot_{\hs A} \wt{\gamma}_{\hs A}(\brh_{1s}^{(3)})\ot \ov{\ba}_{1r}\Bigr].
\end{align*}
So
\begin{align*}
& \wh{D}^0\bigl(\Lambda(\byy)\bigr)\! =\! \sum_{j=0}^s \alpha'_{jrs} \Bigl[1_E\ot_{\hs A}\wt{\gamma}_{\hs A}\bigl(\brh_{j+1,s}^{(4)}\bigr)\!\ot_{\hs A} \! \stackon[-8pt]{$\jmath_{\nu}\bigl(a_0T_s(h_0^{(1)}\!\ot_k\!\brh_{1s}^{(2)}\bigr)\bigr) \gamma\bigl(h_0^{(2)}S_{_{\!\times}}(\brh_{1s}^{(1)})\bigr)$} {\vstretch{1.5}{\hstretch{6.5} {\widetilde{\phantom{\;\;\;\;\;\;\;\;\,}}}}} \!\ot_{\hs A}\! \wt{\gamma}_{\hs A}(\brh_{1j}^{(3)})\!\ot\! \brh_{j+1,s}^{(3)}\xcdot \ov{\ba}_{1r}\Bigr]\\
\shortintertext{and}
%
%
&\wh{D}^1\bigl(\Lambda(\byy)\bigr) = \sum_{j=0}^r \beta'_{jrs} \Bigl[\gamma\bigl(h_0^{(3)}S_{_{\!\times}}(\brh_{1s}^{(1)})\bigr)\ot_{\hs A}\wt{\gamma}_{\hs A} (\brh_{1s}^{(5)})\ot\ov{\ba}_{j+1,r}\ot\ov{a_0T_s(h_0^{(1)}\!\ot_k\! \brh_{1s}^{(3)}\bigr)}\ot \bigl(h_0^{(2)}S_{_{\!\times}}(\brh_{1s}^{(2)})\bigr)\xcdot \bigl(\brh_{1s}^{(4)}\xcdot \ov{\ba}_{1j}\bigr)\Bigr],
\end{align*}
where $\alpha'_{jrs}\coloneqq (-1)^{rs+js+s}$ and $\beta'_{jrs}\coloneqq (-1)^{rs+jr+r+s}$. Now, we have
\begin{align*}
%
& (-1)^{rs+r} \Theta\Bigl(\bigl[1_E\ot_{\hs A} \wt{\gamma}_{\hs A}\bigl(\brh_{j+1,s}^{(4)}\bigr) \ot_{\hs A} \stackon[-8pt]{$\jmath_{\nu}\bigl(a_0T_s(h_0^{(1)}\!\ot_k\! \brh_{1s}^{(2)})\bigr) \gamma\bigl(h_0^{(2)}S_{_{\!\times}}(\brh_{1s}^{(1)})\bigr)$}{\vstretch{1.5}{\hstretch{6.5}{\widetilde{\phantom{\;\;\;\;\;\;\;\;\,}}}}} \ot_{\hs A} \wt{\gamma}_{\hs A}(\brh_{1j}^{(3)})\ot \brh_{j+1,s}^{(3)}\xcdot \ov{\ba}_{1r}\bigr]\Bigr)\\
%
%
& = \bigl(\ov{\bh}_{j+1,s}^{(5)} \ot_{H^{\!L}}\! \ov{h_0^{(2)}S_{_{\!\times}}(\brh_{1s}^{(1)})}\ot_{H^{\!L}}\! \ov{\bh}_{1j}^{(4)}\bigr) \ot_{H^{\!L}}\! \bigl[\gamma_{_{\!\times}}(\brh_{j+1,s}^{(4)})\jmath_{\nu}(a_0)\gamma(h_0^{(1)})\gamma_{_{\!\times}}^{-1} (\brh_{1s}^{(2)})\gamma_{_{\!\times}} (\brh_{1j}^{(3)})\ot \brh_{j+1,s}^{(3)}\xcdot \ov{\ba}_{1r}\bigr] \\
& = \bigl(\ov{\bh}_{j+1,s}^{(5)} \ot_{H^{\!L}}\! \ov{h_0^{(2)}S_{_{\!\times}}(\brh_{1s}^{(1)})}\ot_{H^{\!L}}\! \bh_{1j}^{(2)}\xcdot 1^{(2)}\bigr) \ot_{H^{\!L}}\! \bigl[\gamma_{_{\!\times}}(\brh_{j+1,s}^{(4)})\jmath_{\nu}(a_0)\gamma(h_0^{(1)})\gamma_{_{\!\times}}^{-1} (\brh_{j+1,s}^{(2)})\gamma(1^{(1)})\ot \brh_{j+1,s}^{(3)}\xcdot \ov{\ba}_{1r}\bigr]\\
& = \bigl(\ov{\bh}_{j+1,s}^{(5)} \ot_{H^{\!L}}\! \ov{h_0^{(2)}S_{_{\!\times}}(\brh_{1s}^{(1)})}\ot_{H^{\!L}}\! \bh_{1j}^{(2)}\bigr) \ot_{H^{\!L}}\! \bigl[\gamma_{_{\!\times}}(\brh_{j+1,s}^{(4)})\jmath_{\nu}(a_0)\gamma(h_0^{(1)})\gamma_{_{\!\times}}^{-1} (\brh_{j+1,s}^{(2)})\gamma(1^{(1)}S(1^{(2)}))\ot \brh_{j+1,s}^{(3)}\xcdot \ov{\ba}_{1r}\bigr]\\
& = \bigl(\ov{\bh}_{j+1,s}^{(5)} \ot_{H^{\!L}}\! \ov{h_0^{(2)}S_{_{\!\times}}(\brh_{1s}^{(1)})}\ot_{H^{\!L}}\! \bh_{1j}^{(2)}\bigr) \ot_{H^{\!L}}\! \bigl[\gamma_{_{\!\times}}(\brh_{j+1,s}^{(4)})\jmath_{\nu}(a_0)\gamma(h_0^{(1)})\gamma_{_{\!\times}}^{-1} (\brh_{j+1,s}^{(2)})\ot \brh_{j+1,s}^{(3)}\xcdot \ov{\ba}_{1r}\bigr],
\end{align*}
where the first equality holds by the definition on $\Theta$ and the first identity in Proposition~\ref{priequ'}; the second one, by Lemma~\ref{auxiliar 6}; the third one, by the definition of the action in~\eqref{acciones} and~\cite{GGV1}*{Proposition~2.22(2)}; and the fourth one, since $\gamma(1^{(1)}S(1^{(2)})) = \gamma(1) = 1$. Similarly
\begin{align*}
& (-1)^{rs+s} \Theta\Bigl(\Bigl[\gamma\bigl(h_0^{(3)}S_{_{\!\times}}(\brh_{1s}^{(1)})\bigr)\ot_{\hs A}\wt{\gamma}_{\hs A} (\brh_{1s}^{(5)})\ot\ov{\ba}_{j+1,r}\ot\ov{a_0T_s(h_0^{(1)}\!\ot_k\! \brh_{1s}^{(3)}\bigr)}\ot \bigl(h_0^{(2)}S_{_{\!\times}}(\brh_{1s}^{(2)})\bigr)\xcdot \bigl(\brh_{1s}^{(4)}\xcdot \ov{\ba}_{1j}\bigr)\Bigr]\Bigr)\\
& = \ov{\bh}_{1s}^{(6)} \ot_{H^{\!L}}\! \Bigl[\gamma\bigl(h_0^{(3)}S_{_{\!\times}}(\brh_{1s}^{(1)})\bigr) \gamma_{_{\!\times}}(\brh_{1s}^{(5)}) \ot\ov{\ba}_{j+1,r}\ot\ov{a_0T_s(h_0^{(1)}\!\ot_k\! \brh_{1s}^{(3)}\bigr)}\ot \bigl(h_0^{(2)}S_{_{\!\times}}(\brh_{1s}^{(2)})\bigr)\xcdot \bigl(\brh_{1s}^{(4)}\xcdot \ov{\ba}_{1j}\bigr)\Bigr].
\end{align*}
From these facts it follows that the formulas in the statement are true.
\end{proof}
\printindex

%
%
%

\begin{bibdiv}
\begin{biblist}

\bib{AK}{article}{
   author={Akbarpour, R.},
   author={Khalkhali, M.},
   title={Hopf algebra equivariant cyclic homology and cyclic homology of
   crossed product algebras},
   journal={J. Reine Angew. Math.},
   volume={559},
   date={2003},
   pages={137--152},
   issn={0075-4102},
   review={\MR{1989648}},
   doi={10.1515/crll.2003.046},
}

\bib{AFGR1}{article}{
  author={Alonso \'{A}lvarez, J. N.},
  author={Fern\'{a}ndez Vilaboa, J. M.},
  author={Gonz\'{a}lez Rodr\'{i}guez, R.},
  author={Rodr\'{i}guez Raposo, A. B.},  title={Weak C-cleft extensions and weak Galois extensions},
  journal={Journal of Algebra},
  volume={299},
  number={1},
  pages={276--293},
  year={2006},
  publisher={Academic Press}
}

\bib{AFGR2}{article}{
   author={Alonso \'{A}lvarez, J. N.},
   author={Fern\'{a}ndez Vilaboa, J. M.},
   author={Gonz\'{a}lez Rodr\'{i}guez, R.},
   author={Rodr\'{i}guez Raposo, A. B.},
   title={Weak $C$-cleft extensions, weak entwining structures and weak Hopf algebras},
   journal={J. Algebra},
   volume={284},
   date={2005},
   number={2},
   pages={679--704},
   issn={0021-8693},
   review={\MR{2114575}},
   doi={10.1016/j.jalgebra.2004.07.043},
}

\bib{AFGR3}{article}{
 author={Alonso {\'A}lvarez, J. N.},
 author={Fern{\'a}ndez Vilaboa, J. M.},
 author={Gonz{\'a}lez Rodr{\'{\i}}guez, R.},
 author={Rodr{\'{\i}}guez Raposo, A. B.},
 title={Crossed products in weak contexts},
 journal={Appl. Categ. Structures},
 volume={18},
 date={2010},
 number={3},
 pages={231--258},
 issn={0927-2852},
 review={\MR{2640214 (2011d:18009)}},
 doi={10.1007/s10485-008-9139-2},
}

\bib{AFGLV}{article}{
 author={Alonso {\'A}lvarez, J. N.},
 author={Fern{\'a}ndez Vilaboa, J. M.},
 author={L\'opez L\'opez, J. M.},
 author={Gonz{\'a}lez Rodr{\'{\i}}guez, R.},
 author={Rodr{\'{\i}}guez Raposo, A. B.},
 title={Weak Hopf algebras with projection and weak smash bialgebra structures},
 journal={J. Algebra},
 volume={269},
 date={2003},
 number={2},
 pages={701-725},
 issn={0021-8693},
}

\bib{AG}{article}{
  title={Crossed products for weak Hopf algebras with coalgebra splitting},
  author={{\'A}lvarez, JN Alonso},
  author={Rodr{\'\i}guez, R Gonz{\'a}lez},
  journal={Journal of Algebra},
  volume={281},
  number={2},
  pages={731--752},
  year={2004},
  publisher={Elsevier}
}

\bib{BB}{article}{
  author={B{\"o}hm, Gabriella},
  author={Brzezi{\'n}ski, Tomasz},
  title={Cleft extensions of Hopf algebroids},
  journal={Applied Categorical Structures},
  volume={14},
  number={5-6},
  pages={431--469},
  year={2006},
  publisher={Springer}
}

\bib{BNS1}{article}{
 author={B\"{o}hm, Gabriella},
 author={Nill, Florian},
 author={Szlach\'anyi, Kornel},
 title={Weak Hopf Algebras, I. Integral Theory and $C^*$-Structure},
 journal={J. Algebra},
 volume={221},
 date={1999},
 number={2},
 pages={385--438},
 issn={0021-8693},
}

\bib{BNS2}{article}{
 author={B\"{o}hm, Gabriella},
 author={Nill, Florian},
 author={ Szlach\'anyi, Kornel},
 title={Weak Hopf Algebras, II. Representation theory, dimensions and the Markov trace},
 journal={J. Algebra},
 volume={233},
 number={1},
 date={2000},
 pages={156--212},
 issn={0021-8693},
}

\bib{B}{article}{
 author={Burghelea, Dan},
 title={Cyclic homology and the algebraic $K$-theory of spaces. I},
 conference={
 title={ theory, Part I, II},
 date={1983},
 },
 book={
 series={Contemp. Math.},
 volume={55},
 publisher={Amer. Math. Soc.},
 place={Providence, RI},
 },
 date={1986},
 pages={89--115},
 review={\MR{862632 (88i:18009a)}},
 doi={10.1090/conm/055.1/862632},
}

\bib{CDG}{article}{
 author={Caenepeel, S.},
 author={De Groot, E},
 title={Modules over weak entwining structures},
 journal={Contemporary Mathematics},
 volume={267},
 date={2000},
 pages={31--54},
 issn={0271-4132},
}

\bib{CGG}{article}{
 author={Carboni, Graciela},
 author={Guccione, Jorge A.},
 author={Guccione, Juan J.},
 title={Cyclic homology of Hopf crossed products},
 journal={Adv. Math.},
 volume={223},
 date={2010},
 number={3},
 pages={840--872},
 issn={0001-8708},
 review={\MR{2565551 (2010m:16015)}},
 doi={10.1016/j.aim.2009.09.008},
}

\bib{CGG1}{article}{
  author={Carboni, Graciela},
  author={Guccione, Jorge A.},
  author={Guccione, Juan J.},
  title={Cyclic homology of monogenic extensions in the noncommutative setting},
  journal={Journal of Algebra},
  volume={321},
  number={2},
  pages={404--428},
  year={2009},
  publisher={Elsevier}
}

\bib{CGG2}{article}{
  author={Carboni, Graciela},
  author={Guccione, Jorge A.},
  author={Guccione, Juan J.},
  title={Cohomology ring of differential operator rings},
  journal={Journal of Algebra},
  volume={339},
  number={1},
  pages={55--79},
  year={2011},
  publisher={Elsevier}
}

\bib{CGGV}{article}{
   author={Carboni, Graciela},
   author={Guccione, Jorge A.},
   author={Guccione, Juan J.},
   author={Valqui, Christian},
   title={Cyclic homology of Brzezi\'{n}ski's crossed products and of braided
   Hopf crossed products},
   journal={Adv. Math.},
   volume={231},
   date={2012},
   number={6},
   pages={3502--3568},
   issn={0001-8708},
   review={\MR{2980507}},
   doi={10.1016/j.aim.2012.09.006},
}

\bib{FGG}{article}{
  title={The cohomology of monogenic extensions in the noncommutative setting},
  author={Farinati, Marco},
  author={Guccione, Jorge A.},
  author={Guccione, Juan J.},
  journal={Journal of Algebra},
  volume={319},
  number={12},
  pages={5101--5124},
  year={2008},
  publisher={Elsevier}
}

\bib{FT}{article}{
   author={Feigin, B. L.},
   author={Tsygan, B. L.},
   title={Additive $K$-theory},
   conference={
      title={$K$-theory, arithmetic and geometry},
      address={Moscow},
      date={1984--1986},
   },
   book={
      series={Lecture Notes in Math.},
      volume={1289},
      publisher={Springer, Berlin},
   },
   date={1987},
   pages={67--209},
   review={\MR{923136}},
   doi={10.1007/BFb0078368},
}

\bib{FGR}{article}{
 title={Preunits and weak crossed products},
 author={Fern{\'a}ndez Vilaboa, J. M.},
 author={Gonz{\'a}lez Rodr{\'{\i}}guez, R.},
 author={Rodr{\'{\i}}guez Raposo, A. B.},
 journal={Journal of Pure and Applied Algebra},
 volume={213},
 number={12},
 pages={2244--2261},
 year={2009},
 publisher={Elsevier},
 issn={0022-4049},
}

\bib{GS}{article}{
 author={Gerstenhaber, Murray},
 author={Schack, Samuel D.},
 title={Relative Hochschild cohomology, rigid algebras, and the Bockstein},
 journal={J. Pure Appl. Algebra},
 volume={43},
 date={1986},
 number={1},
 pages={53--74},
 issn={0022-4049},
 review={\MR{862872 (88a:16045)}},
 doi={10.1016/0022-4049(86)90004-6},
}

\bib{GJ}{article}{
   author={Getzler, Ezra},
   author={Jones, John D. S.},
   title={The cyclic homology of crossed product algebras},
   journal={J. Reine Angew. Math.},
   volume={445},
   date={1993},
   pages={161--174},
   issn={0075-4102},
   review={\MR{1244971}},
   doi={10.1515/crll.1995.466.19},
}

\bib{GG1}{article}{
 author={Guccione, Jorge A.},
 author={Guccione, Juan J.},
 title={Comparison of two notions of weak crossed products},
 journal={Journal of Algebra and Its Applications},
volume={20},
number={04},
pages={2150059},
year={2021},
publisher={World Scientific}
}

\bib{GG2}{article}{
title={Hochschild homology of twisted tensor products},
author={Guccione, Jorge A.},
author={Guccione, Juan J.},
journal={K-theory},
volume={18},
number={4},
pages={363--400},
year={1999},
publisher={Springer Science+ Business Media BV, Formerly Kluwer Academic Publishers BV}
}	

\bib{GG3}{article}{
	title={Hochschild homology of some quantum algebras},
	author={Guccione, Jorge A.},
	author={Guccione, Juan J.},
	journal={Journal of Pure and Applied Algebra},
	volume={132},
	number={2},
	pages={129--147},
	year={1998},
	publisher={Elsevier}
}

\bib{GGV1}{article}{
 author={Guccione, Jorge A.},
 author={Guccione, Juan J.},
 author={Valqui, Christian},
  title={Cleft extensions of weak Hopf algebras},
 journal={Journal of Algebra},
 volume={547},
 pages={668--710},
 year={2020},
 publisher={Elsevier}
 }

\bib{GGV2}{article}{
 author={Guccione, Jorge A.},
 author={Guccione, Juan J.},
 author={Valqui, Christian},
 title={(Co)homology of crossed products in weak contexts},
 eprint={1811.02927},
 date={2019}
}

\bib{Ha}{article}{
  title={Face algebras I. A generalization of quantum group theory},
  author={Hayashi, Takahiro},
  journal={Journal of the Mathematical Society of Japan},
  volume={50},
  number={2},
  pages={293--315},
  year={1998},
  publisher={The Mathematical Society of Japan}
}

\bib{JS}{article}{
   author={Jara, P.},
   author={\c{S}tefan, D.},
   title={Hopf-cyclic homology and relative cyclic homology of Hopf-Galois
   extensions},
   journal={Proc. London Math. Soc. (3)},
   volume={93},
   date={2006},
   number={1},
   pages={138--174},
   issn={0024-6115},
   review={\MR{2235945}},
   doi={10.1017/S0024611506015772},
}

\bib{K}{article}{
 author={Kassel, Christian},
 title={Cyclic homology, comodules, and mixed complexes},
 journal={J. Algebra},
 volume={107},
 date={1987},
 number={1},
 pages={195--216},
 issn={0021-8693},
 review={\MR{883882 (88k:18019)}},
 doi={10.1016/0021-8693(87)90086-X},
}

\bib{KS}{article}{
  title={On the Hochschild homology of smash biproducts},
  author={Kaygun, Atabey},
  author={S{\"u}tl{\"u}, Serkan},
  journal={Journal of Pure and Applied Algebra},
  volume={225},
  number={2},
  pages={106506},
  year={2021},
  publisher={Elsevier}	
}

\bib{KR}{article}{
   author={Khalkhali, M.},
   author={Rangipour, B.},
   title={On the cyclic homology of Hopf crossed products},
   conference={
      title={Galois theory, Hopf algebras, and semiabelian categories},
   },
   book={
      series={Fields Inst. Commun.},
      volume={43},
      publisher={Amer. Math. Soc., Providence, RI},
   },
   date={2004},
   pages={341--351},
   review={\MR{2075593}},
}

\bib{NV}{article}{
  title={A Galois correspondence for II1 factors and quantum groupoids},
  author={Nikshych, Dmitri},
  author={Vainerman, Leonid},
  journal={Journal of Functional Analysis},
  volume={178},
  number={1},
  pages={113--142},
  year={2000},
  publisher={Elsevier}
}

\bib{N}{article}{
   author={Nistor, V.},
   title={Group cohomology and the cyclic cohomology of crossed products},
   journal={Invent. Math.},
   volume={99},
   date={1990},
   number={2},
   pages={411--424},
   issn={0020-9910},
   review={\MR{1031908}},
   doi={10.1007/BF01234426},
}

\bib{Ra}{article}{
   author={Rodr\'\i guez Raposo, Ana Bel\'en},
   title={Crossed products for weak Hopf algebras},
   journal={Comm. Algebra},
   volume={37},
   date={2009},
   number={7},
   pages={2274--2289},
   issn={0092-7872},
   review={\MR{2536918}},
   doi={10.1080/00927870802620274},
}

\bib{R}{article}{
  title={Hochschild homology and cohomology of some classical and quantum noncommutative polynomial algebras},
  author={Richard, Lionel},
  journal={Journal of Pure and Applied Algebra},
  volume={187},
  number={1-3},
  pages={255--294},
  year={2004},
  publisher={Elsevier}
}

\bib{V}{article}{
   author={Voigt, Christian},
   title={Equivariant periodic cyclic homology},
   journal={Journal of the Institute of Mathematics of Jussieu},
   volume={6},
   date={2007},
   number={4},
   pages={689--763},
   issn={1474-7480},
   review={\MR{2337312}},
   doi={10.1017/S1474748007000102},
}

\bib{W}{article}{
  title={Complexes de Koszul quantiques},
  author={Wambst, M},
  booktitle={Annales de l'Institut Fourier},
  volume={43},
  number={4},
  pages={1089--1156},
  year={1993}
}

\bib{Y}{article}{
  title={Duality for generalized Kac algebras and a characterization of finite groupoid algebras},
  author={Yamanouchi, Takehiko},
  journal={Journal of Algebra},
  volume={163},
  number={1},
  pages={9--50},
  year={1994},
  publisher={New York: Academic Press,[1964-},
}

\bib{ZH}{article}{
   author={Zhang, Jiao},
   author={Hu, Naihong},
   title={Cyclic homology of strong smash product algebras},
   journal={J. Reine Angew. Math.},
   volume={663},
   date={2012},
   pages={177--207},
   issn={0075-4102},
   review={\MR{2889710}},
   doi={10.1515/CRELLE.2011.098},
}

\end{biblist}
\end{bibdiv}

\end{document}